\documentclass[12pt]{amsart}
\usepackage{amscd,amsfonts,amsthm,amssymb,latexsym,amsmath, amscd, amsmath,amsxtra}
\usepackage[all]{xy}
\usepackage{anysize}
\usepackage[sc]{mathpazo} 
\usepackage{soul,xcolor}
\usepackage[toc,page]{appendix}
\usepackage[francais,english]{babel}
\usepackage[utf8]{inputenc}   
\usepackage{hyperref}
\usepackage{stmaryrd}
\usepackage{fancyhdr}
\usepackage{url}
\usepackage{dsfont}

\marginsize{2.5cm}{2.5cm}{2.5cm}{2.5cm}
\linespread{1.05}

\newtheorem{theorem}{Theorem}[section] 
\newtheorem{lemma}[theorem]{Lemma}
\newtheorem{corollary}[theorem]{Corollary}
\newtheorem{proposition}[theorem]{Proposition}
\theoremstyle{definition}

\newtheorem{example}{Example}
\newtheorem{definition}{Definition}

\newtheorem{question}{Question}
\theoremstyle{remark}
\newtheorem{remark}{Remark}



\renewcommand{\d}{\delta}
\newcommand{\R}{\mathbb{R}}
\newcommand{\GL}{\mathrm{GL}}
\newcommand{\SL}{\mathrm{SL}}

\newcommand{\A}{\mathbb{A}}
\newcommand{\La}{\mathrm{L}}

\newcommand{\diag}{\mathrm{diag}}
\renewcommand{\l}{\lambda}
\newcommand{\C}{\mathbb{C}}
\newcommand{\Q}{\mathbb{Q}}
\newcommand{\Wh}{\mathcal{W}}
\newcommand{\Ki}{\mathcal{K}}
\newcommand{\N}{\mathbb{N}}
\newcommand{\M}{\mathcal{M}}
\newcommand{\F}{\mathbb{F}}
\newcommand{\SAF}{\mathcal{SAF}}

\newcommand{\Hom}{\mathrm{Hom}}
\newcommand{\Gal}{\mathrm{Gal}}
\newcommand{\Res}{\mathrm{Res}}
\newcommand{\Rep}{\mathrm{Rep}}
\newcommand{\Sp}{\mathrm{Sp}}

\newcommand{\LQ}{\mathrm{LQ}}

\newcommand{\Ind}{\mathrm{Ind}}

\newcommand{\Z}{\mathbb{Z}}

\begin{document}
\title[Distinction inside L-packets of $\SL(n)$]
{Distinction inside L-packets of $\SL(n)$}
\author{U. K. Anandavardhanan and Nadir Matringe}

\address{Department of Mathematics, Indian Institute of Technology Bombay, Mumbai - 400076, India.}
\email{anand@math.iitb.ac.in}

\address{Laboratoire Math\'ematiques et Applications, Universit\'e de Poitiers, France.}
\email{matringe@math.univ-poitiers.fr}

\subjclass{11F70}

\date{}

\begin{abstract}
If $E/F$ is a quadratic extension $p$-adic fields, we first prove that the $\SL_n(F)$-distinguished representations inside a distinguished unitary $\La$-packet of $\SL_n(E)$ are precisely those admitting a degenerate Whittaker model with respect to a degenerate character of $N(E)/N(F)$. Then we establish a global analogue of this result. For this, let $E/F$ be a quadratic extension of number fields and let $\pi$ be an $\SL_n(\A_F)$-distinguished square integrable automorphic representation of $\SL_n(\A_E)$. Let $(\sigma,d)$ be the unique pair associated to $\pi$, where $\sigma$ is a cuspidal representation of $\GL_r(\A_E)$ with $n=dr$. Using an unfolding argument, we prove that an element of the $\La$-packet of $\pi$ is distinguished with respect to $\SL_n(\A_F)$ if and only if it has a degenerate Whittaker model for a degenerate character $\psi$ of type $r^d:=(r,\dots,r)$ of $N_n(\A_E)$ which is trivial on $N_n(E+\A_F)$, where $N_n$ is the group of unipotent upper triangular matrices of $\SL_n$. As a first application, under the assumptions that $E/F$ splits at infinity and $r$ is odd, we establish a local-global principle for $\SL_n(\A_F)$-distinction inside the $\La$-packet of $\pi$. As a second application we construct examples of distinguished cuspidal automorphic representations $\pi$ of $\SL_n(\A_E)$ such that the period integral vanishes on some canonical copy of $\pi$, and of  everywhere locally distinguished representations of $\SL_n(\A_E)$ such that their $\La$-packets do not contain any distinguished representation.
\end{abstract}

\maketitle

\section{Introduction}\label{intro}

The present work fits in the study of local distinction and periods of automorphic forms, with respect to Galois pairs of reductive groups. It is motivated by earlier works, namely \cite{ap03,ap18} in the local context, and \cite{ap06,ap13} in the global context, which investigated distinction in the presence of $\La$-packets.

In probing distinction inside an $\La$-packet for $\SL(2)$, the key finding of \cite{ap03,ap06} was that distinction inside an $\La$-packet that contains at least one distinguished representation can be characterized in terms of Whittaker models; i.e., distinguished representations in such ``distinguished" $\La$-packets are precisely the ones which admit a Whittaker model with respect to a non-trivial character of $E/F$ (resp. $\A_E/(E+\A_F)$) in the local (resp. global) case. A crucial role in the global papers on $\SL(2)$ \cite{ap06,ap13} is played by ``multiplicity one for $\SL(2)$", i.e., a cuspidal representation of $\SL_2(\A_L)$ appears exactly once in the space of cusp forms on $\SL_2(\A_L)$ \cite{ram00}.            

More recently, the results of \cite{ap18} generalized \cite{ap03} from $n=2$ to any $n$. Thus, in \cite{ap18}, it is proved, amongst many other results, that if $\pi$ is a generic $\SL_n(F)$-distinguished representation of $\SL_n(E)$, then the distinguished members of the L-packet of $\pi$ are the representations which are $\psi$-generic with respect to some non-degenerate character $\psi$ satisfying $\psi^\theta=\psi^{-1}$, where $\theta$ denotes the Galois involution. Such a relationship between distinction and genericity is expected more generally \cite{pra15}; indeed, if $\psi$ is a non-degenerate character such that $\psi^\theta=\psi^{-1}$, according to \cite[Conjecture 13.3, (3)]{pra15}, for any quasi-split Galois pair, $\psi$-generic members of a distinguished $\La$-packet are distinguished. 

Somewhat surprisingly even the finite field analogue of this characterization of distinction in a generic $\La$-packet turned out to be  non-trivial and is settled only fairly recently \cite[Theorem 5.1]{am18}.

In this paper, we first prove a generalization of the above mentioned local result of \cite{ap03,ap18} for unitary L-packets of $\SL_n(E)$ and degenerate Whittaker models (cf. Theorem \ref{theorem main p-adic}).

\begin{theorem}\label{theorem0}
If $\widetilde{\pi}$ is an irreducible unitary representation of $\GL_{n}(E)$ of type $(n_1,\dots,n_d)$, where $(n_1,\dots,n_d)$ is the partition of $n$ defined in Section \ref{type}, and if the L-packet associated to $\widetilde{\pi}$ contains a representation distinguished by $\SL_n(F)$, then its distinguished members are those which admit a $\psi$-degenerate Whittaker model for $\psi$ of type $(n_1,\dots,n_d)$ satisfying $\psi^\theta=\psi^{-1}$.
\end{theorem}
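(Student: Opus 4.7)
The strategy is to reduce the assertion to the generic case of \cite{ap18} via the Jacquet functor associated to the parabolic of type $(n_1,\dots,n_d)$. Let $P=MN$ be the standard parabolic of $\GL_n$ with Levi $M=\GL_{n_1}\times\cdots\times\GL_{n_d}$ and unipotent radical $N$. A character $\psi$ of $N_n(E)$ is of type $(n_1,\dots,n_d)$ exactly when it is trivial on $N(E)$ and restricts to a non-degenerate character of the maximal unipotent $N_M(E)$ of $M(E)$. The identity
\[
\Hom_{N_n(E)}(\pi,\psi)\;=\;\Hom_{N_M(E)}\bigl(\pi_N,\,\psi|_{N_M(E)}\bigr)
\]
reduces $\psi$-degenerate Whittaker models of $\pi$ to ordinary Whittaker models of $\pi_N$ on $M(E)$. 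By the Tadi\'c classification, $\widetilde{\pi}$ is a product of Speh representations whose shapes realise $(n_1,\dots,n_d)$, and Zelevinsky--Moeglin--Waldspurger derivative theory singles out a canonical irreducible generic quotient $\widetilde{\rho}$ of $\widetilde{\pi}_N$ through which all such Whittaker functionals factor.

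A Mackey-theoretic computation, combined with multiplicity one in the restriction $\widetilde{\pi}|_{\SL_n(E)}$, expresses the total $\SL_n(F)$-distinction multiplicity over the $\La$-packet as $|X(\widetilde{\pi})|$, where $X(\widetilde{\pi})$ is the group of characters $\chi$ of $F^\times$ with $\widetilde{\pi}$ being $(\chi\circ\det)$-distinguished by $\GL_n(F)$; an analogous Mackey count expresses the total $\psi$-degenerate Whittaker multiplicity over the $\La$-packet, summed over the orbit of admissible $\psi$'s, in terms of Whittaker functionals on $\widetilde{\rho}$ invariant under $M(F)\cap\SL_n(F)$.

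The next step is compatibility of distinction with the Jacquet functor: since $\psi^\theta=\psi^{-1}$ forces $\psi|_{N(F)}=1$, pullback along the surjection $\widetilde{\pi}\twoheadrightarrow\widetilde{\rho}$ sends $(M(F)\cap\SL_n(F))$-invariant forms on $\widetilde{\rho}$ to $\SL_n(F)$-invariant forms on $\widetilde{\pi}$; the Mackey count from the previous step forces this map to be an isomorphism. An $F^\times$-equivariance argument, via the natural actions of $\GL_n(F)/\SL_n(F)$ and $M(F)/(M(F)\cap\SL_n(F))$, shows it respects the $\La$-packet decomposition on both sides. Applying the generic case of \cite{ap18}, suitably adapted to the product Levi $M$ and to the distinction subgroup $M(F)\cap\SL_n(F)$, then identifies the distinguished members of the $\La$-packet of $\widetilde{\rho}$ as those admitting a Galois-anti-invariant Whittaker model; composing with the Jacquet projection recovers exactly the advertised $\psi$-degenerate Whittaker models on $\widetilde{\pi}$.

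The principal obstacle is the existence and uniqueness of the generic quotient $\widetilde{\rho}$ of $\widetilde{\pi}_N$ together with the dimension matching required in the compatibility step. This rests on precise Bernstein--Zelevinsky / Moeglin--Waldspurger control of derivatives of Speh representations, and on a Galois-cohomological matching between $X(\widetilde{\pi})$, $X(\widetilde{\rho})$, and the $M(F)$-orbit of Galois-anti-invariant characters $\psi$ of type $(n_1,\dots,n_d)$. As already in the generic case of \cite{ap18}, it is this bookkeeping, rather than any single reduction step, that carries most of the technical weight.
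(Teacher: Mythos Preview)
Your reduction of degenerate Whittaker functionals to ordinary Whittaker functionals on the Jacquet module $\widetilde{\pi}_N$ is fine, and the existence of a unique generic constituent $\widetilde{\rho}$ through which they factor is indeed controlled by Bernstein--Zelevinsky theory. The gap is in the compatibility step for distinction. Pullback along $\widetilde{\pi}\twoheadrightarrow\widetilde{\pi}_N\twoheadrightarrow\widetilde{\rho}$ sends a linear form on $\widetilde{\rho}$ to a form on $\widetilde{\pi}$ that is $N(E)$-invariant (by construction of the Jacquet module) and, under your hypothesis, $(M(F)\cap\SL_n(F))$-invariant; this yields only $(P(F)\cap\SL_n(F))$-invariance, not $\SL_n(F)$-invariance. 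There is no mechanism in your plan that produces invariance under the opposite unipotent $\overline{N}(F)$, so the map you describe simply does not land in $\Hom_{\SL_n(F)}(\widetilde{\pi},\C)$. A Mackey dimension count cannot rescue this: even if the source and target have the same dimension, you still need a well-defined map between them before you can conclude it is an isomorphism. Likewise, your proposed adaptation of \cite{ap18} to the Levi subgroup $M(F)\cap\SL_n(F)$ (which is not a product of $\SL_{n_i}(F)$'s but an extension of one by a torus) is not immediate.

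The paper's proof supplies precisely the construction you are missing. Rather than a single Jacquet step to the block-diagonal Levi, it proceeds via iterated highest derivatives along the mirabolic filtration and the generalized Kirillov model $\Ki(\widetilde{\pi},\widetilde{\pi}^{[n_1]},\psi_{n_1})$. The key input is Proposition~\ref{proposition mat14} (drawn from \cite{mat14}): for a $\GL_n(F)$-distinguished unitary $\widetilde{\pi}$, the invariant linear form is realized by an explicit convergent integral
\[
\lambda(W)=\int_{N_{n,n_1}^\theta\backslash P_n^\theta}\int_{N_{n-n_1,n_2}^\theta\backslash P_{n-n_1}^\theta}\cdots\; W(p_d\cdots p_1)\,dp_d\cdots dp_1
\]
on the degenerate Whittaker model $\Wh(\widetilde{\pi},\psi_{n_1,\dots,n_d})$ with $\psi$ trivial on $N_n(F)$. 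This is what replaces Flicker's integral in the generic case, and once it is in hand the rest of the argument runs verbatim as in \cite{ap03,ap18}: the explicit period is visibly nonzero on some $W$, lives on the $\psi$-Whittaker model, and its $\GL_n(F)$-equivariance under diagonal conjugation gives the matching of distinguished members with distinguished $\psi$'s. The nontrivial analytic content (convergence, $G_n^\theta$-invariance, nonvanishing) is exactly what \cite{mat14} provides and what your Jacquet-module pullback cannot.
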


Our proof of Theorem \ref{theorem0} builds on the work of the second named author \cite{mat14}, which classified unitary representations of $\GL_n(E)$ which are distinguished with respect to $\GL_n(F)$, making use of which we can adapt the techniques of \cite{ap03} and \cite{ap18} to the unitary context. Such a result hints at the possibility of a generalization of Prasad's prediction \cite{pra15}, relating distinction for Galois pairs inside distinguished generic $\La$-packets to distinguished Whittaker models, to non-generic $\La$-packets.

Now we come to the global results of this paper. The study of global representations of $\SL(n)$, already quite involved for $n=2$ as can be seen from \cite{ap06,ap13}, is considerably more difficult for several reasons one of which is that ``multiplicity one" is not true for $\SL(n)$ for $n \geq 3$ as was first shown in the famous work of D. Blasius \cite{bla94,lap99}. 

In this paper, we prove the most basic result about characterizing distinction inside a distinguished $\La$-packet in terms of Whittaker models, thus generalizing \cite[Theorem 4.2]{ap06} from $n=2$ to any $n$, and we cover not just cuspidal representations but the full residual spectrum (cf. Theorem \ref{theorem main}).

\begin{theorem}\label{theorem1}
Suppose $\widetilde{\pi}=\Sp(d,\sigma)$ is an irreducible square-integrable automorphic representation of $\GL_{dr}(\A_E)$, where $\sigma$ is a cuspidal representation of $\GL_r(\A_E)$. Assume that the $\La$-packet determined by $\widetilde{\pi}$ contains an $\SL_{dr}(\A_F)$-distinguished representation. Then an irreducible square-integrable automorphic representation $\pi$ of this $\La$-packet is $\SL_{dr}(\A_F)$-distinguished if and only if there exists a degenerate character $\psi$ of type $r^d:=(r,\dots,r)$ (see Section \ref{section global degenerate whittaker models}) of $N_n(\A_E)$ trivial on $N_n(E+\A_F)$ such that $\pi$ has a degenerate $\psi$-Whittaker model.
\end{theorem}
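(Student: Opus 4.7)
My plan is to follow the general strategy of the $\SL(2)$ proof of \cite[Theorem 4.2]{ap06}, replacing the Fourier expansion of cusp forms used there by an unfolding of the $\SL_n(\A_F)$-period along the description of the residual spectrum of $\GL_n(\A_E)$. Throughout, let $n=dr$, let $N=N_n$ denote the unipotent radical of the standard parabolic $P$ of $\GL_n$ of type $(r,\dots,r)$, and let $\theta$ denote the non-trivial element of $\Gal(E/F)$. First I would set up the $\La$-packet of $\widetilde{\pi}=\Sp(d,\sigma)$ following Moeglin--Waldspurger, so that each square-integrable $\pi$ in the packet is realized as a residue at a distinguished point of an Eisenstein series built from a twist $\sigma\otimes\chi$ of $\sigma$ by an idele class character $\chi$ of $E^\times$, and record that degenerate Whittaker coefficients of such residues along $N$ for characters of type $r^d$ are explicitly computable in terms of ordinary Whittaker coefficients of $\sigma$.

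The core of the argument is an unfolding identity. For $\phi\in\pi$, I would write $\phi$ as a residue of an Eisenstein series and compute
$$\int_{\SL_n(F)\backslash \SL_n(\A_F)}\phi(g)\,dg$$
by unfolding the Eisenstein series against the period and applying the Bruhat decomposition relative to $P$. Cuspidality of $\sigma$ forces all but the open orbit of $P(E)\backslash \GL_n(E)/\SL_n(F)$ to contribute zero, and on this open orbit the unfolded integral collapses to a pairing of a degenerate $\psi$-Whittaker coefficient of $\phi$ along $N(E)\backslash N(\A_E)$ with an $\SL_n(\A_F)$-invariant functional coming from the Levi period. The requirement that the outer measure be $\SL_n(\A_F)$-invariant forces the surviving character $\psi$ to satisfy $\psi|_{N(\A_F)}=1$; combined with $\psi|_{N(E)}=1$, this is exactly the condition that $\psi$ be trivial on $N(E+\A_F)$, equivalently $\psi^\theta=\psi^{-1}$.

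The two directions of the theorem then fall out of the unfolding. For the ``only if'' direction, if the period does not vanish on some $\phi\in\pi$ the identity produces a non-vanishing degenerate $\psi$-Whittaker coefficient with $\psi$ of the required type. For the ``if'' direction, starting from a non-zero degenerate $\psi$-Whittaker coefficient of $\pi$ with $\psi$ trivial on $N(E+\A_F)$, I run the unfolding in reverse to construct a section whose period is non-zero; the hypothesis that the packet is distinguished is used here to guarantee that the Eisenstein series has a pole at the relevant point and that its residue lands in the automorphic realization of $\pi$. Compatibility of the resulting $\psi$ with the local classification supplied by Theorem \ref{theorem0} at each place ensures that the global character of type $r^d$ we produce is the one asserted by the theorem.

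The main obstacle is the orbit and character analysis inside the unfolding step: one must show that exactly one orbit of $P(E)\backslash \GL_n(E)/\SL_n(F)$ contributes and that its stabilizer in $P(E)$ is precisely the subgroup cutting out characters of $N(\A_E)$ of type $r^d$ trivial on $N(E+\A_F)$. Vanishing on the non-open orbits should follow from the cuspidality of $\sigma$ via a mirabolic-style filtration in the spirit of \cite{ap18} and \cite{mat14}, but tracking the characters carefully through the unfolding, controlling the convergence of the residual integral, and matching the result at almost every place with Theorem \ref{theorem0} to upgrade the identity to a rigorous global statement, are the delicate points.
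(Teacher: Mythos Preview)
Your approach differs substantially from the paper's and, as written, contains gaps that would prevent it from going through. A first sign of trouble is notational: you write ``let $N=N_n$ denote the unipotent radical of the standard parabolic $P$ of $\GL_n$ of type $(r,\dots,r)$'', but in the paper $N_n$ is the full upper-triangular unipotent group, and the degenerate Whittaker coefficient in the theorem is taken along this full $N_n$ with respect to a character $\psi_{1,\dots,d}$ that is \emph{non-degenerate on each diagonal $N_r$-block} and trivial on the unipotent radical $U_{r^d}$ of $P$. Unfolding an Eisenstein series induced from $P$ against the $\SL_n(\A_F)$-period would at best produce an integral over $U_{r^d}$ coupled with a Levi period involving $\sigma$; to reach the degenerate Whittaker coefficient one must still Fourier-expand on each $N_r$-block of the Levi, and that is exactly where the cuspidal ($d=1$) unfolding enters. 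Your sketch collapses these two layers into one step, and the orbit analysis you allude to (``all but the open orbit of $P(E)\backslash \GL_n(E)/\SL_n(F)$ contribute zero'') does not obviously hold: the sum defining the Eisenstein series is over $E$-points while the period is over $\A_F$-points, so the double coset space you name is not the one governing the unfolding.

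The paper proceeds instead by a two-stage induction and never writes $\pi$ as a residue of an Eisenstein series. For $d=1$ (Proposition~\ref{proposition unfolding}) one first passes from the $\SL_n(\A_F)$-period to a $P_n^1(\A_F)$-period via a regularization lemma (Lemma~\ref{lemma step 1}, relying on \cite{sv17}), then applies Poisson summation on $U_n(\A_E)/U_n(E+\A_F)$ together with transitivity of $\SL_{n-1}(F)$ on the non-trivial characters to descend to $\SL_{n-1}$; the base case $n=2$ is \cite{ap06}. For $d\geq 2$ one invokes Yamana's formula \cite[Theorem 1.1]{yam15} as a black box: summed over characters of $F^\times\backslash\A_F^1$ via Proposition~\ref{proposition fourier}, it expresses $p_n(\varphi)$ as an outer integral of $p_{n-r}$ applied to $\varphi_{U_{n,r},\psi}$, which by Remark~\ref{remark constant term} lies in $\Res(\Sp(d-1,\sigma))$; one then inducts on $d$ (Lemma~\ref{lemma induction}, Proposition~\ref{prop main}). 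Finally, your ``if'' direction --- ``run the unfolding in reverse'' --- is neither what the paper does nor clearly feasible. The paper's argument (Lemma~\ref{lemmathm}) is a short transitivity trick: the distinguished packet contains some distinguished $\pi'$, which by the ``only if'' direction is $\psi'$-generic for a distinguished $\psi'$; any two such characters are $T_n(F)$-conjugate, so conjugating by $t\in T_n(F)$ carries $\pi'$ to the $\psi$-generic member of the packet, which equals $\pi$ by local uniqueness of degenerate Whittaker models, and $t\in\GL_n(F)$ preserves the period.
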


There are two main ideas in proving Theorem \ref{theorem1}. First we settle the cuspidal case by creating an inductive set up based on an unfolding method, and make use of the base case for $n=2$ which is known by \cite[Theorem 4.2]{ap06}. We mention here that the method that we follow to create this inductive set up is very parallel to that employed in \cite[Section 5]{dp19} (cf. Remark \ref{dp-tams}). Having established the cuspidal case for all $r$, we do one more induction, this time in $d$, where $n=dr$, the case $d=1$ being the cuspidal case. In order to work this out, the key ingredient is the work of Yamana \cite{yam15}, which is the global counterpart of \cite{mat14}, and we need to do one more unfolding argument as well.

As an application of Theorem \ref{theorem1}, we establish a local global principle for square-integrable representations for $(\SL_n(\A_E),\SL_n(\A_F))$ (cf. Theorem \ref{theorem localglobal}).

\begin{theorem}\label{theorem2}
Let $E/F$ be a quadratic extension of number fields split at the archimedean places. Suppose $\widetilde{\pi}=\Sp(d,\sigma)$ is a square-integrable automorphic representation of $\GL_{dr}(\A_E)$, where $\sigma$ is a cuspidal representation of $\GL_r(\A_E)$, where we assume that $r$ is odd. Suppose that the $\La$-packet determined by $\widetilde{\pi}$ contains an $\SL_{dr}(\A_F)$-distinguished representation. Let $\pi$ be an irreducible square-integrable automorphic representation of $\SL_n(\A_E)$ which belongs to this $\La$-packet. Write $\pi =\otimes'_v \pi_v$, for $v$ varying through the places of $F$. Then, $\pi$ is distinguished with respect to $\SL_n(\A_F)$ if and only if each $\pi_v$ is $\SL_n(F_v)$-distinguished. 
\end{theorem}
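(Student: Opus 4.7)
The forward implication is immediate by decomposing the global $\SL_n(\A_F)$-period on a pure tensor as a product of local invariant forms, each of which must be non-zero; so each $\pi_v$ is $\SL_n(F_v)$-distinguished.

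For the converse, suppose each $\pi_v$ is distinguished. By Theorem \ref{theorem0}, at every place $v$ one can choose a character $\psi_v$ of $N_n(E_v)$ of type $r^d$, trivial on $N_n(F_v)$ (equivalently $\psi_v^\theta=\psi_v^{-1}$), such that $\pi_v$ admits a $\psi_v$-degenerate Whittaker model. By Theorem \ref{theorem1}, distinction of $\pi$ by $\SL_n(\A_F)$ is equivalent to the existence of a global character $\psi$ of $N_n(\A_E)$ of type $r^d$, trivial on $N_n(E+\A_F)$, affording a non-vanishing $\psi$-degenerate Whittaker coefficient on $\pi$. The problem thus reduces to patching the local $\psi_v$'s into such a global $\psi$.

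Having fixed a non-trivial additive character $\psi_0$ of $\A_E/(E+\A_F)$, the characters $\psi$ of type $r^d$ trivial on $N_n(E+\A_F)$ are parameterized by tuples $(A_1,\dots,A_{d-1})\in\GL_r(F)^{d-1}$ via $\psi(u)=\psi_0\bigl(\sum_i\mathrm{tr}(A_iX_i(u))\bigr)$, with an analogous local description after fixing local components of $\psi_0$. A direct computation with the block-diagonal Levi intersected with $\SL_n$ shows that two such characters lie in the same $\SL_n(F)$-orbit iff they share the same class of $\prod_i\det(A_i)^{d-i}$ in $F^\times/(F^\times)^d$, and similarly locally in $F_v^\times/(F_v^\times)^d$. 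Within the $\La$-packet of $\widetilde{\pi}_v$, there is a unique representation admitting a Whittaker model along a given local orbit, so the distinguished $\pi_v$ picks out a well-defined class $x_v\in F_v^\times/(F_v^\times)^d$, and producing the desired $\psi$ amounts to lifting $(x_v)_v$ to a class $x\in F^\times/(F^\times)^d$.

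The main obstacle is precisely this adelic lifting. The archimedean-splitting hypothesis handles the infinite places, where $\SL_n(E_v)=\SL_n(F_v)\times\SL_n(F_v)$ and every member of a distinguished archimedean $\La$-packet is distinguished with an unconstrained Whittaker class, so no local constraint is imposed there. At the finite places the crucial point is that, when $r$ is odd, the $r$-th power map on the relevant Whittaker-character quotient is bijective, so that the classes $x_v$ arising from the distinguished components $\pi_v$ are compatible with a global class via a standard Hasse principle of $\mu_d$-torsors type. Producing such an $x\in F^\times/(F^\times)^d$ yields a global character $\psi$ whose local components are $\SL_n(F_v)$-equivalent to $\psi_v$ at every place; invoking Theorem \ref{theorem1} then delivers $\SL_n(\A_F)$-distinction of $\pi$.
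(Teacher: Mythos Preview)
Your proposal has two genuine gaps.

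First, your parameterization of the degenerate characters is incorrect. In this paper a character of type $r^d$ is \emph{trivial} on the unipotent radical $U_{r^d}$ of the $(r,\dots,r)$-parabolic and restricts to a non-degenerate character on each diagonal $N_r$-block; it is therefore parameterized by $d$ tuples in $(E^\times)^{r-1}$ (one for each block), not by $(d-1)$ matrices $A_i\in\GL_r$ acting on the off-diagonal blocks $X_i$. Your formula $\psi(u)=\psi_0(\sum_i\mathrm{tr}(A_iX_i(u)))$ describes a different object, and the orbit invariant $\prod_i\det(A_i)^{d-i}\in F^\times/(F^\times)^d$ you derive from it is not the one governing the problem.

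Second, and more seriously, the patching step is not justified. You assert that the local classes $x_v$ lift to a global class ``via a standard Hasse principle of $\mu_d$-torsors type,'' but no such principle is available: the map $F^\times/(F^\times)^d\to\prod_v F_v^\times/(F_v^\times)^d$ is neither injective nor surjective onto coherent families in general. You also do not explain what ``the $r$-th power map on the relevant Whittaker-character quotient is bijective'' means or why $r$ odd implies it; as stated this is a placeholder rather than an argument.

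The paper's proof proceeds along a different and more structural route. One fixes a distinguished member $\pi_0\in\La(\widetilde{\pi})$ and works with the stabilizer $G_\pi=\bigcap_{\chi\in X(\widetilde{\pi})}\ker\chi$ inside $\A_E^\times$ (acting via $x\mapsto\diag(x,I_{n-1})$). The sets of locally distinguished, automorphic, and globally distinguished members of the packet are the orbits $\A_F^\times G_\pi\cdot\pi_0$, $E^\times G_\pi\cdot\pi_0$, and $F^\times G_\pi\cdot\pi_0$ respectively (using Theorem~\ref{theorem main p-adic}, Corollary~\ref{corollary transitive action on L packets}, and Proposition~\ref{corollary transitive on dist}). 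The local-global principle then reduces to showing $E^\times G_\pi\cap\A_F^\times\subset F^\times G_\pi$. For $x$ in the left-hand side one has $x^2=xx^\theta\in F^\times G_\pi$ because $G_\pi$ is $\theta$-stable (since $\widetilde{\pi}$ can be taken conjugate self-dual), and $x^r\in G_\pi$ because $\diag(x^r,I_{n-1})$ and $\diag(xI_r,I_{n-r})$ act the same way and the latter fixes every $\psi$ of type $r^d$. Since $\gcd(2,r)=1$, this forces $x\in F^\times G_\pi$. The oddness of $r$ enters precisely here, through coprimality with $2$, not through any Hasse-type cohomological lifting.
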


\begin{remark}
Such a local global principle was proved in \cite{ap06} for cuspidal representations of $\SL_2(\A_E)$ by quite involved arguments. In contrast, our proof is reasonably elementary, making use of the assumption that $r$ is odd.
\end{remark}

Another important objective of the present paper is to analyse distinction vis-a-vis the phenomenon of higher multiplicity for $\SL(n)$. As mentioned earlier, unlike in the case of $\SL(2)$, a cuspidal representation may appear in the space of cusp forms with multiplicity more than $1$ for $\SL(n)$ for $n \geq 3$ \cite{bla94,lap99}.  

In our first set of examples, we give a precise answer regarding the non-vanishing of the period integral on the canonical realizations of a cuspidal representation inside the $\La$-packets obtained from restricting the cusp forms on $\GL_n(\A_E)$. We exhibit two types of examples of cuspidal representations of $\SL_n(\A_E)$ of multiplicity $m(\pi)$ more than $1$ in the space of cusp forms which are $\SL_n(\A_F)$-distinguished (cf. \S \ref{bp}, \S \ref{examples}). In one set of examples, $F$ is any number field and $E/F$ is chosen so that the period integral vanishes on some of the $m(\pi)$ many canonical realizations but not on all the canonical realizations. In the second set of examples, $F$ is any number field and $E/F$ is chosen so that the period integral does not vanish in any of the $m(\pi)$ many canonical realizations inside the $\La$-packets. 

Then we tweak the method employed to construct the above examples to also show that the local-global principle fails at the level of non-distinguished $\La$-packets for $\SL(n)$ (cf. \S \ref{subsection examples 3}). Namely we give examples of cuspidal representations $\pi$ of $\SL_n(\A_E)$ which are distinguished at every place, but such that the L-packet of $\pi$ contains no distinguished representation. Such a phenomenon was observed for $\SL(2)$ as well by an explicit construction in \cite[Theorem 8.2]{ap06}. The construction in \cite{ap06} is somewhat involved whereas our analogous examples in \S \ref{subsection examples 3} are conceptually simpler, however the methods here are tailor-made for $n$ odd.

All our examples of cuspidal representations of $\SL_n(\A_E)$ of high multiplicity which are $\SL_n(\A_F)$-distinguished in Section \ref{questions}, that highlight a variety of different phenomena, owe a lot to the examples of Blasius of high cuspidal multiplicity \cite{bla94}. Blasius makes use of the representation theory of the Heisenberg group $H$, and in particular the fact that different Heisenberg representations are such that their value at any element of the group are conjugate in ${\rm PGL}_n(\C)$ but they are projectively inequivalent \cite[Section 1.1]{bla94}. To give a rough idea, Blasius produces in \cite{bla94} high multiplicity examples on $\SL_n(\A_E)$ by transferring this representation theoretic information about Heisenberg groups to Galois groups of $L/E$ for suitable number fields, via Shafarevich's theorem, and then to the automorphic side via the strong Artin conjecture which is a theorem in the situation at hand, as Gal$(L/E) \simeq H$ is nilpotent, due to Arthur-Clozel \cite[Theorem 7.1]{ac89}. For our examples, we start with an involution on $H$ and consider the corresponding semi-direct product $H \rtimes \Z/2$, which cuts out extensions $L \supset E \supset F$, and play with these involutions to construct a variety of examples answering several natural questions about distinction for the pair $(\SL_n(\A_E),\SL_n(\A_F))$.

Finally we mention that we give proofs of some elementary, and probably standard, facts on archimedean and global L-packets of $\SL_n$ for which we could not find accessible sources in the literature. They follow from \cite{ags15} in the archimedean setting, and from \cite{jl13} in the global setting. 

\section{Notations}

We denote by $\d_G$ the character of a locally compact group $G$ such that $\d_G\lambda$ is a right invariant Haar 
measure on $G$ if $\l$ is a left invariant Haar measure on $G$. 
We denote by $\M_{a,b}$ the algebraic group of $a\times b$ matrices. We denote by $G_n$ the algebraic group $\GL_n$, 
by $T_n$ its diagonal torus and by $N_n$ the group of upper triangular matrices in $G_n$. 
We set 
\[U_n=\{u_n(x)=\begin{pmatrix}I_{n-1} & x \\ & 1 \end{pmatrix},\ x\in (\A^1)^{n-1}\}<N_n\] where $\A^1$ denotes the affine line. For $k\leq n$, we embed 
$G_k$ inside $G_n$ via $g\mapsto \diag(g,I_{n-k})$ and set $P_n=G_{n-1}U_n$ the mirabolic subgroup of $G_n$. We denote by $N_{n,r}$ the group of matrices 
\[k(a,x,u)=\begin{pmatrix} a & x \\ & u\end{pmatrix}\]
with $a\in G_{n-r}$, 
$x\in \M_{n-r,r}$ and $u\in N_r$. We denote by $U_{n,r}$ the unipotent radical of $N_{n,r}$, which consists of the matrices $k(I_{n-k}, x, u)$. 
Note that $N_{n,n}=N_n$ and
\[U_{n,r} = U_n\dots U_{r+1}.\] For a subgroup $H$ of $G_n$, we denote by $H^\circ$ the intersection of $H$ with $\SL_n$.

\section{Non-archimedean theory}\label{section non-archimedean theory}

Let $E/F$ be a quadratic extension of $p$-adic fields with Galois involution $\theta$. We denote by 
$|\ |_E$ and $|\ |_F$ the respective normalized absolute values. 
In this section, by abuse of notation, we set $G=G(E)$ for any algebraic group defined over $E$. We denote by $\nu_E$ (or $\nu$), the character 
$|\ |_E\circ \det$ of $G_n$. We fix a non-trivial character $\psi_{0}$ of $E$ which is trivial on $F$.

\subsection{The type of an irreducible  $\GL$-representation via derivatives}\label{type}

If $\psi$ is a non-degenerate (smooth complex) character of $N_n$, we denote by $\psi^k$ its restriction to $U_k$ for $k\leq n$. 
We denote by $\Rep(\bullet)$ the category of smooth complex representations of 
$\bullet$. In \cite{bz76} and \cite{bz77}, Bernstein and Zelevinsky have introduced the functors 
\[{\Phi}_{\psi^n}^-:\Rep(P_n)\rightarrow \Rep(P_{n-1})\] and
 \[\Psi^-:\Rep(P_n)\rightarrow \Rep(G_{n-1}).\] For 
$(\tau,V)\in \Rep(P_n)$, one has 
\[{\Phi}_{\psi^n}^-(V)=V/V(U_n,\psi_n)\] where $V(U_n,\psi_n)$ is the space spanned by the differences $\tau(u)v-\psi_n(u)v$ for $u\in U_n$ and 
$v\in V$, but the action of $P_{n-1}$ on ${\Phi}_{\psi^n}^-(V)$ is normalized by twisting by $\d_{P_n}^{-1/2}$. Similarly 
\[{\Psi}^-(V)=V/V(U_n,1)\] where the action of $G_{n-1}$ on ${\Psi}^-(V)$ is normalized by twisting by $\d_{P_n}^{-1/2}$ again. 

The functor 
${\Phi}_{\psi^n}^-$ does not in fact depend on $\psi$ in the sense that for $\tau\in \Rep(P_n)$ one has 
${\Phi}_{\psi^n}^-(\tau)\simeq {\Phi}_{{\psi'}^n}^-(\tau)$ whenever $\psi$ and $\psi'$ are non-degenerate characters of $N_n$. Hence we simply write 
$\Phi^-(\tau)$ for it. For $\tau\in \Rep(P_n)$, we set 
\[\tau_{(k)}=(\Phi^{-})^{k-1}(\tau)\in \Rep(P_{n+1-k}),\]  and
\[\tau^{(k)}=\Psi^-(\Phi^{-})^{k-1}(\tau)\in \Rep(G_{n-k}),\] which is called the $k$-th derivative of $\tau$. 
The $k$-th shifted derivative of $\tau$ is given by
\[\tau^{[k]}=\nu^{1/2}\tau^{(k)}.\] 
Note that these definitions apply when $\tau$ is a representation of $G_n$ which we consider as a representation of $P_n$ by restriction. 

Let $\widetilde{\pi}$ be an irreducible smooth representation of $G_n$. We denote by $\widetilde{\pi}^{[n_1]}$ its highest (non-zero) 
shifted derivative, by $\widetilde{\pi}^{[n_1,n_2]}:=(\widetilde{\pi}^{[n_1]})^{[n_2]}$ the highest shifted derivative of
$\widetilde{\pi}^{[n_1]}$, and so on. All the representations $\widetilde{\pi}^{[n_1,n_2,\dots,n_i]}$ are irreducible thanks to
\cite[Theorem 8.1]{zel80}. This defines a finite sequence of positive integers $(n_1,\dots,n_d)$ such than $n_1+\dots+n_d=n$. In 
fact, \cite[Theorem 8.1]{zel80} implies that this sequence is a partition of $n$, i.e., $n_1\geq n_2 \geq \dots \geq n_d$. We call 
$(n_1,\dots,n_d)$ \textit{the partition associated to $\widetilde{\pi}$}. We will also say that $\widetilde{\pi}$ \textit{is of type} $(n_1,\dots,n_d)$. 
Note that by \cite[Section 7.4]{ber84}, if $\widetilde{\pi}$ is unitary, then all the 
representations $\widetilde{\pi}^{[n_1,n_2,\dots,n_i]}$ are unitary as well.

\begin{example}
Using the product notation for normalized parabolic induction, if $\d$ is an essentially square-integrable representation of $G_r$ we set 
 \[\Sp(d,\d)=\LQ(|\ |_E^{(d-1)/2}\d \times \dots \times |\ |_E^{(1-d)/2}\d)\] to be the Langlands quotient of the parabolically induced representation 
 \[|\ |_E^{(d-1)/2}\d \times \dots \times |\ |_E^{(1-d)/2}\d.\] 
 More generally, if 
 $\tau=\d_1\times \dots \times \d_l$ is a generic unitary representation of $G_r$ written as a commutative product of essentially square-integrable representations (\cite[Theorem 9.7]{zel80}), we set 
 \[\Sp(d,\tau)=\Sp(d,\d_1)\times \dots \times \Sp(d,\d_l)\] which is a commutative product by the results of Tadic (\cite[Theorem D]{tad86}).
 In this situation, \cite[4.5, Lemma]{bz77} together with the computation of the highest derivative of Speh representations (\cite[\S 3.5 (3.3)]{os08} and \cite[\S 6.1]{tad87}) imply that the partition of $n=rd$ associated 
 to $\Sp(d,\tau)$ is $r^d:=(r,\dots,r)$. Conversely one can check using the same results that an irreducible unitary representation of $G_n$ of type $r^d$ is of the form $\Sp(d,\tau)$ for a unitary generic representation $\tau$ of $G_r$. We refer to Section \ref{subsection degenerate archimedean whittaker models} for the details in the archimedean setting which are the same as in the non-archimedean setting.
\end{example}

\subsection{Degenerate Whittaker models and $\La$-packets}\label{section p-adic nd Whittaker models}

Let $\psi_{n_i}$ be a non-degenerate character of the group $N_{n_i}$. By \cite[Section 8]{zel80}, if the representation 
$\widetilde{\pi}$ is of type $(n_1,\dots,n_d)$, then it has a unique degenerate Whittaker model with respect to \[(\psi_{n_1}\otimes \dots \otimes \psi_{n_d})\begin{pmatrix} u_d & \dots & . \\
 &  \ddots  & \vdots \\ & & u_1 \end{pmatrix}=\psi_{n_1}(u_1)\dots \psi_{n_d}(u_d)\] for $u_i\in N_{n_i}$. We often use the notation 
 \[\psi_{n_1,\dots,n_d}:=\psi_{n_1}\otimes \dots \otimes \psi_{n_d},\] it has 
 the advantage of being short but could mislead the reader so we insist on the fact that $\psi_{n_1,\dots,n_d}$ 
 depends on the characters $\psi_{n_i}$ and not only on the positive integers $n_i$. 
  We will say that 
 $\psi_{n_1,\dots,n_d}$ \textit{is of type} $(n_1,\dots,n_d)$. If all the 
 $n_i$ are equal then we set 
 \[\psi_{1,\dots,d}:=\psi_{n_1,\dots,n_d}.\]
 
The $\La$-packet associated to $\widetilde{\pi}$ is the finite set of irreducible representations of $G_n^\circ=\SL_n(E)$ appearing in 
 the restriction of $\widetilde{\pi}$, and is denoted by $\La(\widetilde{\pi})$. We refer to \cite[Section 2]{hs12} for its basic properties that we now state (see also \cite{gk82} or \cite{tad92}). 
 Any irreducible representation $\pi$ of $G_n^\circ$ arises in the restriction of an irreducible representation of $G_n$ and two 
 irreducible representations of $G_n$ containing $\pi$ are 
 twists of each other by a character. Hence it makes sense to set $\La(\pi)=\La(\widetilde{\pi})$, and call this finite set the 
 $\La$-packet of $\pi$ (or the L-packet determined by $\widetilde{\pi}$).
 \textit{We define the type of $\pi$ (or the type of $\La(\pi)$) to be that of $\widetilde{\pi}$.} Of course two irreducible representations of $G_n$ determining the same $\La$-packet have the same type. 
 
Clearly the group $\diag(E^\times,I_{n-1})$ acts transitively on $\La(\widetilde{\pi})$ and the existence 
 of a degenerate Whittaker model for irreducible representations of $G_n$ then has the following immediate consequence.
 
\begin{lemma}\label{lemma existence of nd Whitt model for p-adic L-packets}
Let $\widetilde{\pi}$ be an irreducible representation of $G_n$ of type $(n_1,\dots,n_d)$, then the group $\diag(E^\times,I_{n-1})$ acts transitively on $\La(\widetilde{\pi})$ and every member of $\La(\widetilde{\pi})$ has a (necessarily unique) degenerate $\psi$-Whittaker model for some $\psi$ of type $(n_1,\dots,n_d)$.
\end{lemma}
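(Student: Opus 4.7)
The plan is to combine the uniqueness of the degenerate Whittaker functional on $\widetilde{\pi}$ (recalled just before the lemma, from \cite[Section 8]{zel80}) with a transitive conjugation action on $\La(\widetilde{\pi})$.

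For the transitivity, I would invoke standard Clifford theory: since $\widetilde{\pi}$ is irreducible and $G_n^\circ$ is normal in $G_n$, the conjugation action of $G_n$ transitively permutes the (finitely many) isomorphism classes of irreducible summands of $\widetilde{\pi}|_{G_n^\circ}$, and elements of $G_n^\circ$ act trivially on isomorphism classes. Hence the action factors through $G_n/G_n^\circ \simeq E^\times$ via the determinant, and since $\diag(E^\times, I_{n-1})$ surjects onto this quotient, it already acts transitively on $\La(\widetilde{\pi})$.

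Next I would fix a character $\psi_0$ of $N_n$ of type $(n_1,\dots,n_d)$. The space of $\psi_0$-equivariant linear forms on $\widetilde{\pi}$ is one-dimensional, and since $N_n \subset G_n^\circ$, the decomposition $\widetilde{\pi}|_{G_n^\circ} = \bigoplus_i \pi_i$ yields a corresponding direct sum decomposition of this one-dimensional space. Hence exactly one summand $\pi_{i_0}$ carries a nonzero $\psi_0$-degenerate Whittaker model, unique up to scalars. For any other $\pi \in \La(\widetilde{\pi})$, transitivity furnishes $t = \diag(a, I_{n-1})$ with $\pi \simeq \pi_{i_0}^t$, and then $\pi$ inherits a degenerate Whittaker model for the character $\psi_0^t \colon u \mapsto \psi_0(tut^{-1})$.

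The last step is to verify that $\psi_0^t$ is again of type $(n_1,\dots,n_d)$. This is routine: conjugation by $t$ normalizes $N_n$ and preserves its block factorization $N_{n_1}\times\cdots\times N_{n_d}$, modifying only the nondegenerate character $\psi_{n_1}$ on the first block, and there by the inner twist coming from $\diag(a, I_{n_1-1})$, which preserves nondegeneracy because multiplication by $a \in E^\times$ is a bijection on the top superdiagonal entry. Uniqueness of the $\psi_0^t$-Whittaker model on $\pi$ then follows by repeating the decomposition argument with $\psi_0^t$ in place of $\psi_0$. I do not foresee any real obstacle; the only check worth making carefully is this compatibility of conjugation by $\diag(E^\times, I_{n-1})$ with the type of the degenerate character.
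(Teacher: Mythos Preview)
Your argument is correct and is exactly the fleshing-out of what the paper leaves implicit (the paper simply asserts the transitivity and calls the lemma an ``immediate consequence'' of the existence of the degenerate Whittaker model for $\widetilde{\pi}$). One small indexing slip: in the paper's convention the block $u_d\in N_{n_d}$ sits in the \emph{top-left} corner and $u_1\in N_{n_1}$ in the bottom-right, so conjugation by $\diag(a,I_{n-1})$ twists $\psi_{n_d}$ (via $\diag(a,I_{n_d-1})$), not $\psi_{n_1}$; the conclusion that $\psi_0^t$ remains of type $(n_1,\dots,n_d)$ is unaffected.
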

 
Uniqueness of degenerate Whittaker models for $\widetilde{\pi}$ together with Lemma \ref{lemma existence of nd Whitt model for p-adic L-packets} then has the following well-known consequence.
 
\begin{proposition}\label{proposition multiplicity one inside non arch L-packets}
If $\widetilde{\pi}$ is an irreducible representation of $G_n$ then the representations in $\La(\widetilde{\pi})$ appear with multiplicity one 
in the restriction of $\widetilde{\pi}$ to $G_n^\circ$. 
\end{proposition}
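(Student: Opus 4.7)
The plan is to combine the uniqueness of $\psi$-degenerate Whittaker models on $G_n$ for $\psi$ of type $(n_1,\ldots,n_d)$ with the transitivity of the $\diag(E^\times,I_{n-1})$-action on $\La(\widetilde{\pi})$ supplied by Lemma \ref{lemma existence of nd Whitt model for p-adic L-packets}, in order to propagate multiplicity one from a single constituent to every member of the packet.

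First I would write $\widetilde{\pi}|_{G_n^\circ} = \bigoplus_{\pi \in \La(\widetilde{\pi})} m_\pi\,\pi$ as a finite isotypic decomposition. Since $N_n \subset G_n^\circ$, applying $\Hom_{N_n}(-,\psi)$ for any $\psi$ of type $(n_1,\dots,n_d)$ gives
\[\Hom_{N_n}(\widetilde{\pi},\psi) \;=\; \bigoplus_{\pi \in \La(\widetilde{\pi})} m_\pi\,\Hom_{N_n}(\pi,\psi).\]
The left-hand side is one-dimensional by the uniqueness of the degenerate Whittaker model on $G_n$ recalled at the beginning of Section \ref{section p-adic nd Whittaker models}, while by Lemma \ref{lemma existence of nd Whitt model for p-adic L-packets} at least one $\pi_0 \in \La(\widetilde{\pi})$ does admit a $\psi$-Whittaker functional. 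The displayed identity then forces $m_{\pi_0}=1$, and moreover $\Hom_{N_n}(\pi,\psi)=0$ for every $\pi\not\simeq \pi_0$ in the packet.

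To spread multiplicity one to the remaining constituents, I would exploit the normality of $G_n^\circ$ in $G_n$: for any $t\in \diag(E^\times,I_{n-1})$ the operator $\widetilde{\pi}(t)$ is an intertwiner between $\widetilde{\pi}|_{G_n^\circ}$ and its twist by $\mathrm{Ad}(t^{-1})$, so it carries the $\pi$-isotypic component bijectively onto the $\pi^{\mathrm{Ad}(t^{-1})}$-isotypic component and forces $m_\pi = m_{\pi^{\mathrm{Ad}(t^{-1})}}$. Combined with the transitivity of the $\diag(E^\times,I_{n-1})$-action on $\La(\widetilde{\pi})$ from Lemma \ref{lemma existence of nd Whitt model for p-adic L-packets}, this yields that all multiplicities $m_\pi$ are equal; since one of them has already been pinned down to $1$, they all equal $1$.

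The only place that requires any care is the very first step, namely that the uniqueness statement really is available for degenerate Whittaker models of type exactly $(n_1,\dots,n_d)$ (i.e.\ the type of $\widetilde{\pi}$), and not just for non-degenerate characters; but that is precisely the content of \cite[Section 8]{zel80} as recalled at the start of Section \ref{section p-adic nd Whittaker models}, so no additional input is needed. Everything else is a bookkeeping argument about isotypic components under a normal subgroup, with the single transitivity input from Lemma \ref{lemma existence of nd Whitt model for p-adic L-packets}.
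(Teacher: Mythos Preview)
Your proposal is correct and is precisely the argument the paper has in mind: the text simply asserts that the proposition is a ``well-known consequence'' of the uniqueness of degenerate Whittaker models for $\widetilde{\pi}$ together with Lemma \ref{lemma existence of nd Whitt model for p-adic L-packets}, and your write-up is exactly the standard unpacking of that sentence (dimension count on $\Hom_{N_n}(-,\psi)$ to pin down one multiplicity, then transitivity of the $\diag(E^\times,I_{n-1})$-action to propagate it).
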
 

In fact we can be more precise. The following lemma follows from the fact that if $\widetilde{\pi}$ is of 
type $(n_1,\dots,n_d)$ then $\widetilde{\pi}^{[n_1,\dots,n_{k-1}]}$ is of type $(n_k,\dots,n_d)$ (cf. \S \ref{type}). 

\begin{lemma}\label{lemma irrep with fixed wh model in the derivative}
If $\widetilde{\pi}$ is an irreducible representation of $G_n$ of type $(n_1,\dots,n_d)$, then $\La(\widetilde{\pi}^{[n_1,\dots,n_{k-1}]})$ contains a unique irreducible representation of $G_{n_k+\dots+n_d}^\circ$ with a degenerate Whittaker model with respect to 
$\psi_{n_k,\dots,n_d}$. 
\end{lemma}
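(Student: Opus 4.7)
The plan is to leverage the hint: apply the preceding structural lemmas to the representation $\widetilde{\sigma} := \widetilde{\pi}^{[n_1,\dots,n_{k-1}]}$, which by the type analysis of Section \ref{type} is an irreducible representation of $G_{n_k+\dots+n_d}$ of type $(n_k,\dots,n_d)$. Setting $m = n_k+\dots+n_d$, the L-packet in question is $\La(\widetilde{\sigma})$, and I want to show that exactly one of its members carries a $\psi_{n_k,\dots,n_d}$-degenerate Whittaker functional.

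For existence, I would argue as follows. Since $\widetilde{\sigma}$ is irreducible of type $(n_k,\dots,n_d)$, the general theory recalled at the start of \S \ref{section p-adic nd Whittaker models} (Zelevinsky's \cite[Section 8]{zel80}) guarantees that $\widetilde{\sigma}$ admits a (unique up to scalar) degenerate Whittaker functional with respect to the specific character $\psi_{n_k,\dots,n_d}$. By Proposition \ref{proposition multiplicity one inside non arch L-packets}, the restriction of $\widetilde{\sigma}$ to $G_m^\circ$ decomposes with multiplicity one as a direct sum of the members of $\La(\widetilde{\sigma})$; the nonzero $\psi_{n_k,\dots,n_d}$-functional on $\widetilde{\sigma}$ must therefore restrict nontrivially to at least one summand, yielding a member of $\La(\widetilde{\sigma})$ with the desired degenerate Whittaker model.

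For uniqueness, I would argue by contradiction. Suppose two distinct irreducible constituents $\pi_1, \pi_2 \in \La(\widetilde{\sigma})$ of $\widetilde{\sigma}|_{G_m^\circ}$ both admit $\psi_{n_k,\dots,n_d}$-degenerate Whittaker functionals. Since the restriction is multiplicity free, $\pi_1$ and $\pi_2$ correspond to independent projection operators, and composing their Whittaker functionals with these projections produces two linearly independent $\psi_{n_k,\dots,n_d}$-functionals on $\widetilde{\sigma}$. This contradicts the uniqueness of the degenerate Whittaker model on $\widetilde{\sigma}$ guaranteed by \cite[Section 8]{zel80}.

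No serious obstacle is anticipated: the whole argument is a direct combination of (i) invariance of type under iterated highest shifted derivatives, (ii) uniqueness of degenerate Whittaker models on irreducible $\GL_m$-representations, and (iii) multiplicity one in the restriction to $\SL_m$. The only mild subtlety is making sure that the character used in the definition of $\widetilde{\pi}^{[n_1,\dots,n_{k-1}]}$ is compatible with the one carried by the subsequent derivatives — but this is exactly what is recorded in the identification $\widetilde{\sigma} = \widetilde{\pi}^{[n_1,\dots,n_{k-1}]}$ being of type $(n_k,\dots,n_d)$, so it enters for free.
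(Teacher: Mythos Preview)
Your proposal is correct and follows essentially the same approach as the paper: the paper's proof is simply the one-line observation that $\widetilde{\pi}^{[n_1,\dots,n_{k-1}]}$ is of type $(n_k,\dots,n_d)$, leaving the reader to combine this with the uniqueness of degenerate Whittaker models on $G_m$ (Zelevinsky) and the multiplicity-free restriction to $G_m^\circ$ (Proposition~\ref{proposition multiplicity one inside non arch L-packets}), exactly as you have spelled out.
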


Again $\La(\widetilde{\pi}^{[n_1,\dots,n_{k-1}]})$ only depends on $\La(\widetilde{\pi})=\La(\pi)$ (because derivatives commute with character twists), and we set 
\[\La(\pi)^{[n_1,\dots,n_{k-1}]}:=\La(\widetilde{\pi}^{[n_1,\dots,n_{k-1}]})\] for any irreducible representation $\widetilde{\pi}$ of 
$G_n$ such that $\pi\in \La(\widetilde{\pi})$.

\begin{definition}\label{definition irrep with fixed wh model in the derivative}
Let $\pi$ be an irreducible representation of $G_n^\circ$. Let $\pi^{[n_1,\dots,n_{k-1}]}(\psi_{n_k,\dots,n_d})$ denote the irreducible representation of $G_{n_k+\dots+n_d}^\circ$ isolated in Lemma 
\ref{lemma irrep with fixed wh model in the derivative}, i.e., the unique representation in $\La(\pi)^{[n_1,\dots,n_{k-1}]}$ with a degenerate Whittaker model with respect to $\psi_{n_k,\dots,n_d}$. In particular, $\pi(\psi_{n_1,\dots,n_d})$ denotes the unique 
irreducible representation of $G_n^\circ$ in $\La(\pi)$ with a degenerate Whittaker model with respect to $\psi_{n_1,\dots,n_d}$.
\end{definition}

\begin{remark}
We do not claim that if $\pi(\psi)=\pi(\psi')$, then $\psi$ and $\psi'$ are in the same $T_n^\circ$-conjugacy class.
\end{remark}

\subsection{Distinguished representations inside a distinguished $\La$-packet}\label{section distinction inside p-adic packets}

Let $\widetilde{\pi}$ be an irreducible representation of $G_n$. We start by making explicit the relation between the degenerate Whittaker models $\Wh(\widetilde{\pi},\psi_{n_1,\dots,n_d})$ and 
$\Wh(\widetilde{\pi}^{[n_1]},\psi_{n_{2},\dots,n_d})$. 

\begin{lemma}\label{lemma compatibility restriction whittaker model}
The map \[W\mapsto  W_{|G_{n-n_1}}\] is surjective from 
$\Wh(\widetilde{\pi},\psi_{n_1,\dots,n_d})$ to $\Wh(\nu_E^{(n_1-1)/2}\widetilde{\pi}^{[n_1]},\psi_{n_2,\dots,n_d})$. 
\end{lemma}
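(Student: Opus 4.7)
My approach is to identify $W \mapsto W|_{G_{n-n_1}}$ with the natural map on Whittaker functionals coming from the quotient $\widetilde{\pi} \twoheadrightarrow \widetilde{\pi}^{(n_1)}$ that defines the $n_1$-th derivative, and then deduce surjectivity (in fact, bijectivity) from the universal property of coinvariants.

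The structural input is the decomposition $N_n = N_{n-n_1} \ltimes U_{n,n_1}$, where $U_{n,n_1} = U_n U_{n-1}\cdots U_{n-n_1+1}$ is the normal subgroup of upper-triangular unipotents with identity top-left $(n-n_1)\times(n-n_1)$ block, and $N_{n-n_1}$ is embedded via $u\mapsto\diag(u,I_{n_1})$. With respect to this decomposition, the character $\psi_{n_1,\dots,n_d}$ restricts to $\psi_{n_2,\dots,n_d}$ on $N_{n-n_1}$, while on $U_{n,n_1}$ it becomes nondegenerate of $\psi_{n_1}$-type on each $U_k$ for $k \geq n-n_1+2$ (the corresponding simple-root positions lie inside the bottom-right $n_1$-block) and trivial on $U_{n-n_1+1}$ (an off-diagonal root between the two lowest blocks of the partition). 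Crucially, this restriction to $U_{n,n_1}$ is precisely the character used successively by $(\Phi^-)^{n_1-1}$ and $\Psi^-$ in the definition of $\widetilde{\pi}^{(n_1)} = \Psi^-(\Phi^-)^{n_1-1}(\widetilde{\pi})$.

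Given the character matching, any $\psi_{n_1,\dots,n_d}$-Whittaker functional $\ell$ on $\widetilde{\pi}$ vanishes on all the relations $\widetilde{\pi}(u)v - \psi_{n_1,\dots,n_d}(u)v$ for $u \in U_{n,n_1}$ that define the derivative quotient, and therefore descends to a functional $\bar\ell$ on $\widetilde{\pi}^{(n_1)}$ which is $\psi_{n_2,\dots,n_d}$-Whittaker for the residual $G_{n-n_1}$-action. At the level of Whittaker functions, writing $W(g)=\ell(\widetilde{\pi}(g)v)$, this descent corresponds concretely to $W\mapsto W|_{G_{n-n_1}}$: for $h \in G_{n-n_1}$, $\ell(\widetilde{\pi}(h)v) = \bar\ell(\widetilde{\pi}^{(n_1)}(h)\bar v)$ after accounting for the modulus twist, where $\bar v$ is the image of $v$. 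Combining the $\delta_{P_k}^{-1/2}$ factors accumulated at each application of $\Phi^-$ and $\Psi^-$ with the further $\nu^{1/2}$ encoded in $\widetilde{\pi}^{[n_1]} = \nu^{1/2}\widetilde{\pi}^{(n_1)}$, the total twist against the naive restriction is exactly $\nu_E^{(n_1-1)/2}$, which is the target representation in the statement.

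Conversely, for surjectivity, any $\bar\ell \in \Hom_{N_{n-n_1}}(\widetilde{\pi}^{(n_1)}, \psi_{n_2,\dots,n_d})$ pulls back through the surjection $\widetilde{\pi}\twoheadrightarrow \widetilde{\pi}^{(n_1)}$ to a functional $\ell$ on $\widetilde{\pi}$ which is $\psi_{n_1,\dots,n_d}|_{U_{n,n_1}}$-equivariant by construction of the quotient and $\psi_{n_2,\dots,n_d}$-equivariant on $N_{n-n_1}$ by hypothesis; the semidirect decomposition $N_n = N_{n-n_1}\ltimes U_{n,n_1}$ together with the matching factorization of $\psi_{n_1,\dots,n_d}$ then forces $\ell$ to be $\psi_{n_1,\dots,n_d}$-equivariant on all of $N_n$. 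This produces an inverse on functionals, hence the map on Whittaker models is a bijection, in particular surjective. The main technical obstacle is not conceptual but the careful accounting of the modulus characters, ensuring that the cumulative normalization twist is exactly $\nu_E^{(n_1-1)/2}$ relative to $\widetilde{\pi}^{[n_1]}$; once this bookkeeping is in place the lemma is immediate.
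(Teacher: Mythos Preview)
Your approach is correct and is essentially an unpacking of the paper's argument: the paper invokes \cite[Proposition 1.2]{cps17} to identify $W|_{P_{n-n_1+1}}$ with the Whittaker model of the iterated $\Phi^-$, then passes to $G_{n-n_1}$ via $\Psi^-$, whereas you do both steps at once by recognizing $\widetilde{\pi}^{(n_1)}$ directly as the $(U_{n,n_1},\psi_{n_1,\dots,n_d}|_{U_{n,n_1}})$-coinvariants of $\widetilde{\pi}$ (with the accumulated modulus twist). This is the same mechanism, just made self-contained.

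There is, however, one genuine error in your conclusion. The bijection $\ell\leftrightarrow\bar\ell$ on Whittaker \emph{functionals} does \emph{not} imply that $W\mapsto W|_{G_{n-n_1}}$ is a bijection on Whittaker \emph{models}. The restriction map is identified, via your own computation $W_v|_{G_{n-n_1}}=W'_{\bar v}$, with the quotient map $v\mapsto\bar v$ from $\widetilde{\pi}$ to $\widetilde{\pi}^{(n_1)}$; this is surjective but has nontrivial kernel whenever $n_1\geq 1$. So the lemma's surjectivity follows immediately from the surjectivity of $\widetilde{\pi}\twoheadrightarrow\widetilde{\pi}^{(n_1)}$ together with your identification $W_v|_{G_{n-n_1}}=W'_{\bar v}$---an argument already implicit in your forward direction---and not from the inverse on functionals. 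Drop the bijectivity claim and the proof is complete.
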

\begin{proof}
By the same proof as in \cite[Proposition 1.2]{cps17}, the map 
\[W\mapsto W_{|P_{n-n_1+1}}\]
is a surjection from 
$\Wh(\widetilde{\pi},\psi_{n_1,\dots,n_d})$ to $\Wh(\nu_E^{(n-n_1+1)/2}\widetilde{\pi}_{(n_1-1)},\psi_{n_2,\dots,n_d})$. But then, because 
$ W(gu)=W(g)$ for $W\in \Wh(\widetilde{\pi},\psi_{n_1,\dots,n_d})$, $g \in G_{n-n_1}$ and $u\in U_{n-n_1+1}$, we deduce that 
$W_{|G_{n-n_1}}\in \Wh(\nu_E^{n_1/2}\widetilde{\pi}^{(n_1)},\psi_{n_2,\dots,n_d})$ and that 
\[W\mapsto W_{|G_{n-n_1}}\]
is surjective from $\Wh(\widetilde{\pi},\psi_{n_1,\dots,n_d})$ to $\Wh(\nu_E^{n_1/2}\widetilde{\pi}^{(n_1)},\psi_{n_2,\dots,n_d})$. The result follows.
\end{proof}

We denote by $\Ki(\widetilde{\pi},\widetilde{\pi}^{[n_1]},\psi_{n_1})$, the generalized Kirillov model of $\widetilde{\pi}$ (see \cite[Section 5]{zel80}) with respect to $\widetilde{\pi}^{[n_1]}$ and $\psi_{n_1}$. It is, by definition, the image of the unique embedding of $\widetilde{\pi}_{|P_n}$ into the space of functions $K:P_n\rightarrow \pi^{[n_1]}$ which satisfy \[K(k(a,x,u_1)p)=\nu(a)^{(n_1-1)/2}\psi_{n_1}(u_1)\pi^{[n_1]}(a)K(p)\] for $k(a,x,u_1)\in N_{n,n_1}$.

Let $\widetilde{\pi}$ be an irreducible representation of $G_n$ with degenerate Whittaker model 
$\Wh(\widetilde{\pi},\psi_{n_1,\dots,n_d})$. Then, by Lemma \ref{lemma compatibility restriction whittaker model}, for any 
$W\in \Wh(\widetilde{\pi},\psi_{n_1,\dots,n_d})$ and $g\in G_n$, the map 
\[g_1\mapsto \nu_E^{(n_1-1)/2} W(g_1g)\]
belongs to 
$\Wh(\widetilde{\pi}^{[n_1]},\psi_{n_2,\dots,n_d})$. We set 
\[I(W): G_n \rightarrow \Wh(\nu_E^{(n_1-1)/2}\widetilde{\pi}^{[n_1]},\psi_{n_2,\dots,n_d})\]
to be the map defined by \[I(W)(g): g_1\in G_{n-n_1}\mapsto  W(g_1g).\] 
Hence $I$ realizes 
$\Wh(\widetilde{\pi},\psi_{n_1,n_2,\dots,n_d})$ inside the induced representation
\[\Ind_{N_{n,n_1}}^{G_n}(\Wh(\nu_E^{(n_1-1)/2}\widetilde{\pi}^{[n_1]},\psi_{n_2,\dots,n_d})\otimes \psi_{n_1}).\] 
Then the map 
$W\mapsto I(W)_{|P_n}$ is a bijection from 
\[\Wh(\widetilde{\pi},\psi_{n_1,n_2,\dots,n_d}) \rightarrow \Ki(\widetilde{\pi}, \Wh(\widetilde{\pi}^{[n_1]},\psi_{n_2,\dots,n_d}),\psi_{n_1})).\]

The following is now a consequence of the results of \cite{mat14}.

\begin{proposition}\label{proposition mat14}
Let $\widetilde{\pi}$ be an irreducible unitary representation of $G_n$ which is distinguished with respect to $G_n^\theta$, with degenerate Whittaker model 
$\Wh(\widetilde{\pi},\psi_{n_1,\dots,n_d})$, and suppose that $\psi_{n_1,\dots,n_d}$ is trivial on $N_n(F)$. Then the 
invariant linear form on $\widetilde{\pi}$ is expressed as a local period on $\Wh(\widetilde{\pi},\psi_{n_1,\dots,n_d})$ by
\[\lambda(W)= \int_{N_{n,n_1}^\theta\backslash P_n^\theta}
\int_{N_{n-n_1,n_2}^\theta\backslash P_{n-n_1}^\theta} \dots \int_{N_{n-\sum_{i=1}^{d-1} n_i,n_d}^\theta\backslash P_{n-\sum_{i=1}^{d-1}n_i}^\theta} W(p_d\dots p_2 p_1) d p_d \dots d p_2 d p_1.\] 
\end{proposition}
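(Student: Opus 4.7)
I would proceed by induction on the length $d$ of the type $(n_1,\dots,n_d)$, using the classification and integral representation of distinguished unitary representations of $\GL_n(E)$ from \cite{mat14}. For the base case $d=1$, the representation $\widetilde{\pi}$ is generic, unitary and $G_n^\theta$-distinguished with a non-degenerate Whittaker character trivial on $N_n(F)$; the Flicker integral, extended in \cite{mat14} to the full generic unitary case, says that the invariant linear form --- unique up to scalar by Flicker--Rallis multiplicity one --- is realised as $W \mapsto \int_{N_n^\theta \backslash P_n^\theta} W(p)\, dp$, which matches the claimed formula for $d=1$.

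For the inductive step, the key input is the one-step unfolding established in \cite{mat14}: if $\widetilde{\pi}$ is unitary and $G_n^\theta$-distinguished with degenerate Whittaker model $\Wh(\widetilde{\pi},\psi_{n_1,\dots,n_d})$ with $\psi_{n_1,\dots,n_d}$ trivial on $N_n(F)$, then the highest derivative $\widetilde{\pi}^{[n_1]}$ is itself unitary and $G_{n-n_1}^\theta$-distinguished, its degenerate character $\psi_{n_2,\dots,n_d}$ is still trivial on $N_{n-n_1}(F)$, and through the Kirillov realisation $\Ki(\widetilde{\pi},\Wh(\widetilde{\pi}^{[n_1]},\psi_{n_2,\dots,n_d}),\psi_{n_1})$ the invariant form on $\widetilde{\pi}$ factors as
\[ \lambda(W) \;=\; \int_{N_{n,n_1}^\theta \backslash P_n^\theta} \lambda^{[n_1]}\bigl(I(W)(p_1)\bigr)\, dp_1, \]
where $\lambda^{[n_1]}$ denotes the invariant form on $\widetilde{\pi}^{[n_1]}$ and $I(W)$ is the Kirillov function introduced before the proposition. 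Since $\widetilde{\pi}^{[n_1]}$ has type $(n_2,\dots,n_d)$ (see \S\ref{type}), the induction hypothesis applies and expresses $\lambda^{[n_1]}$ as the corresponding nested integral of $d-1$ terms. Substituting this back yields the full iterated period of the proposition.

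The main obstacles are: (i) absolute convergence and the validity of Fubini at each level of the nesting, which is where unitarity is genuinely used, via control of the decay of Whittaker functions along the Kirillov direction, and is the content of the main estimates of \cite{mat14}; and (ii) the bookkeeping of normalisations --- the twist $\nu_E^{(n_1-1)/2}$ carried by $I(W)$, the $\d_{P_n}^{-1/2}$ built into the derivative functors, and the modular character of $N_{n,n_1}^\theta \backslash P_n^\theta$ must cancel at each step so that no spurious scalar appears in the iteration. Once (i) and (ii) are in place, $G_n^\theta$-invariance of the iterated integral is formal from the transformation law of $W$, non-vanishing follows inductively from non-vanishing at the base case (by choosing $W$ whose iterated Kirillov restrictions are supported near the identity), and Flicker--Rallis uniqueness of the invariant form then forces equality with $\lambda$ in the chosen normalisation.
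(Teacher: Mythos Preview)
Your proposal is correct and follows essentially the same route as the paper: induction on $d$, with the base case $d=1$ handled by Flicker's integral (extended to the unitary generic case via Ok's result as recorded in \cite{mat14}), and the inductive step carried out by passing to the Kirillov model $\Ki(\widetilde{\pi},\widetilde{\pi}^{[n_1]},\psi_{n_1})$, invoking \cite[Propositions 2.2, 2.4, 2.5]{mat14} to know that $\widetilde{\pi}^{[n_1]}$ is distinguished and that the unique invariant form on $\widetilde{\pi}$ is realised as $K\mapsto \int_{N_{n,n_1}^\theta\backslash P_n^\theta} L(K(p_1))\,dp_1$ for $L$ the invariant form on $\widetilde{\pi}^{[n_1]}$, and then substituting the inductive expression for $L$. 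Your discussion of convergence and normalisation is a welcome elaboration, but the skeleton of the argument is exactly the paper's.
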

\begin{proof}
The proof is by induction on $d$. For $d=1$, the representation is unitary generic, and the fact that \[W\mapsto \int_{N_n^\theta\backslash P_n^\theta} W(p)dp\]
is well-defined is due to Flicker \cite[Section 4]{fli88}, and that it is $G_n^\theta$-invariant is a result due to Youngbin Ok (see \cite[Proposition 2.5]{mat14} for a more general statement in the unitary context). Then for a general $d$, by \cite[Proposition 2.4]{mat14}, if $\widetilde{\pi}$ is distinguished, so is $\widetilde{\pi}^{[n_1]}$, and we take $L\in \Hom_{G_{n-n_1}^\theta}(\widetilde{\pi}^{[n_1]},\C)-\{0\}$. By \cite[Propositions 2.2 and 2.5]{mat14}, the linear form 
\begin{equation} \label{lK} \l_K:K\mapsto \int_{N_{n,n_1}^{\theta}\backslash P_n^{\theta}} L(K(p_1)) dp_1\end{equation} is, up to scaling, the unique $G_n^\theta$-invariant linear form on $\Ki(\widetilde{\pi},\widetilde{\pi}^{[n_1]},\psi_{n_1})$. We realize $\widetilde{\pi}$ as 
$\Wh(\widetilde{\pi},\psi_{n_1,\dots,n_d})$ and $\widetilde{\pi}^{[n_1]}$ as $\Wh(\widetilde{\pi}^{[n_1]},\psi_{n_2,\dots,n_d})$. Then by induction 
\[L(W')= \int_{N_{n-n_1,n_2}^\theta\backslash P_{n-n_1}^\theta} \dots \int_{N_{n-\sum_{i=1}^{d-1} n_i,n_d}^\theta\backslash P_{n-\sum_{i=1}^{d-1} n_i}^\theta} W'(p_r\dots p_2) d p_r \dots d p_2.\] Applying it to $W'=K(p_1)=I(W_K)(p_1)$ for the unique $W_K\in \Wh(\widetilde{\pi},\psi_{n_1,\dots,n_d})$ such that the previous equality holds, the result follows in view of the discussion preceding the proposition.
\end{proof}
 
\begin{theorem}\label{theorem main p-adic}
Let $\pi$ be an irreducible unitary representation of $\SL_n(E)$ of type $(n_1,\dots,n_d)$ which is $\SL_n(F)$-distinguished. 
Then the $\SL_n(F)$-distinguished representations in $\La(\pi)$ are precisely the representations $\pi(\psi)$ for 
a character $\psi$ of $N_n$ of type $(n_1,\dots,n_d)$ such that 
$\psi_{|N_n^\theta}\equiv \mathbf{1}$.
\end{theorem}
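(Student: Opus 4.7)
The plan is to adapt the strategy of \cite{ap03} and \cite{ap18} from the generic to the non-generic unitary setting, exploiting Proposition \ref{proposition mat14} as the key new ingredient. First I reduce to the case where $\widetilde{\pi}$ is $G_n^\theta$-distinguished: given that some $\pi\in\La(\widetilde{\pi})$ is $\SL_n(F)$-distinguished, a standard Mackey/character-twist argument (of the kind used in \cite{ap03}) produces a character $\chi$ of $E^\times$ such that $\widetilde{\pi}\otimes(\chi\circ\det)$ is $G_n^\theta$-distinguished; such a twist preserves $\La(\widetilde{\pi})$ and translates the class of characters of type $(n_1,\dots,n_d)$ trivial on $N_n^\theta$ to itself. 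I therefore assume throughout that $\widetilde{\pi}$ itself is $G_n^\theta$-distinguished.

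For the sufficiency direction ($\Leftarrow$), given $\psi$ of type $(n_1,\dots,n_d)$ trivial on $N_n^\theta$, Proposition \ref{proposition mat14} applied to $\Wh(\widetilde{\pi},\psi)$ furnishes an explicit non-zero $G_n^\theta$-invariant form $\lambda$ on $\widetilde{\pi}$, realized as an iterated Flicker-type period. Its restriction to the summand $\pi(\psi)\subset\widetilde{\pi}|_{\SL_n(E)}$ is an $\SL_n(F)$-invariant form, and I show this restriction is non-zero by unwinding the inductive Kirillov construction of $\lambda$ (as in formula \eqref{lK} and the proof of Proposition \ref{proposition mat14}), reducing the non-vanishing to the generic base case $d=1$, which is exactly the statement of \cite[Theorem 1.2]{ap18}. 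For the necessity direction ($\Rightarrow$), I use a multiplicity-one-plus-counting argument: uniqueness (up to scalar) of the $G_n^\theta$-invariant form on the unitary distinguished $\widetilde{\pi}$ from \cite{mat14}, combined with local multiplicity-one of $\SL_n(F)$-periods on each irreducible summand (following by Mackey theory), forces the set of $\SL_n(F)$-distinguished summands to form a single $\diag(F^\times,I_{n-1})$-orbit inside $\La(\widetilde{\pi})$. Since translating by $\diag(F^\times,I_{n-1})$ preserves the class of characters of type $(n_1,\dots,n_d)$ trivial on $N_n^\theta$, and sufficiency places every such $\pi(\psi)$ in this orbit, the converse direction follows.

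The main technical obstacle I anticipate is the non-vanishing step in the sufficiency direction: it requires tracking carefully how the iterated Kirillov period interacts with the restriction map $\Wh(\widetilde{\pi},\psi)\to\Wh(\pi(\psi),\psi)$ coming from $W\mapsto W|_{\SL_n(E)}$, and inductively reducing to the non-vanishing of the Flicker period on the distinguished generic summand in the base case. The remaining multiplicity and orbit bookkeeping for $\SL_n$-periods should follow from standard Mackey arguments together with the uniqueness results from \cite{mat14}, but care is needed because $\lambda$ need not vanish on $\widetilde{\pi}(p)v$ for $v$ in summands other than $\pi(\psi)$ when $p\in P_n(F)\setminus\SL_n(F)$, so I must genuinely distinguish between $\GL_n(F)$-translates and $\SL_n(F)$-translates of summands throughout the argument.
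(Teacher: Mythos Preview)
Your plan is exactly the paper's: its proof is the single sentence ``follow \cite[Section 3]{ap03} and \cite[Section 4]{ap18}, replacing Flicker's invariant linear form by Proposition \ref{proposition mat14},'' and your reduction to a $G_n^\theta$-distinguished $\widetilde{\pi}$ together with the explicit iterated period of Proposition \ref{proposition mat14} is precisely that substitution.

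One caution on your necessity argument: you invoke ``local multiplicity-one of $\SL_n(F)$-periods on each irreducible summand (following by Mackey theory),'' but this is \emph{false} in general --- multiplicity one for the pair $(\SL_n(E),\SL_n(F))$ is known only for $n$ odd (see \cite{ana05}, \cite[p.~1703]{ap18}) and can fail for $n$ even. The arguments in \cite{ap03,ap18} do not use multiplicity one on summands; rather, they show that every $\SL_n(F)$-invariant form on a summand arises by restricting some $(G_n^\theta,\chi)$-equivariant form on $\widetilde{\pi}$, and then use uniqueness of the latter (for each $\chi$) together with the explicit period formula to identify the support as a single $\diag(F^\times,I_{n-1})$-orbit. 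If you rewrite your necessity step along those lines the argument goes through; as written, the appeal to multiplicity one on summands is a gap for even $n$.
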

\begin{proof}
The proof follows exactly along the same lines of the generic case as in \cite[Section 3]{ap03} and \cite[Section 4]{ap18}, making use of Proposition \ref{proposition mat14} in lieu of Flicker's invariant linear form mentioned above.
\end{proof}

Theorem \ref{theorem main p-adic} has the following consequences.

\begin{proposition}\label{proposition dist rep with given wh inside derivative}
Let $\pi$ be an irreducible unitary representation of $G_n^\circ$ of type $(n_1,\dots,n_d)$, and fix 
$\psi_{n_1,\dots,n_d}$ a character of $N_n$ of this type trivial on $N_n^\theta$. If $\pi(\psi_{n_1,\dots,n_d})$ is $\SL_n(F)$-distinguished, then the representation 
$\pi^{[n_1,\dots,n_{k-1}]}(\psi_{n_k,\dots,n_d})$ is $\SL_{\sum_{i=k}^d n_i}(F)$-distinguished for all 
$k=1,\dots,d$.
\end{proposition}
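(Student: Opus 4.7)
The plan is to induct on $k$. For $k=1$ the statement is simply the hypothesis. For the induction step from $k-1$ to $k$, the inductive hypothesis applied to $\pi' := \pi^{[n_1,\dots,n_{k-2}]}(\psi_{n_{k-1},\dots,n_d})$ gives an $\SL_{\sum_{i=k-1}^d n_i}(F)$-distinguished representation of type $(n_{k-1},\dots,n_d)$ which is of the form $\pi'(\psi_{n_{k-1},\dots,n_d})$ for the expected character trivial on the relevant fixed torus. Hence, up to relabelling, the entire induction is subsumed by the single elementary step $k=2$: if $\pi(\psi_{n_1,\dots,n_d})$ is $\SL_n(F)$-distinguished with $\psi_{n_1,\dots,n_d}$ trivial on $N_n^\theta$, then $\pi^{[n_1]}(\psi_{n_2,\dots,n_d})$ is $\SL_{n-n_1}(F)$-distinguished.

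The first move in this elementary step is to lift from $\SL_n(E)$ to $G_n$. Following the same strategy as in the proof of Theorem \ref{theorem main p-adic} (which adapts \cite{ap03, ap18} to the unitary setting via \cite{mat14}), I produce an irreducible unitary representation $\widetilde{\pi}$ of $G_n$ that is $G_n^\theta$-distinguished, contains $\pi(\psi_{n_1,\dots,n_d})$ in its restriction to $\SL_n(E)$, and admits a degenerate Whittaker model with respect to $\psi_{n_1,\dots,n_d}$. Concretely, starting from any irreducible lift $\widetilde{\pi}_0$ of $\pi(\psi_{n_1,\dots,n_d})$, a suitable twist by a character of $E^\times$ through $\det$ arranges the central character so that the $\SL_n(F)$-invariant form on $\pi(\psi_{n_1,\dots,n_d})$ extends to a $G_n^\theta$-invariant form on $\widetilde{\pi}$; such a twist leaves the Whittaker character unchanged and preserves triviality on $N_n^\theta$.

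Once this lift is in hand, the rest is a clean chain of derivative-compatibility arguments. By \cite[Proposition 2.4]{mat14}, the shifted derivative $\widetilde{\pi}^{[n_1]}$ is itself $G_{n-n_1}^\theta$-distinguished. By the discussion of types in Section \ref{type}, it is an irreducible unitary representation of type $(n_2,\dots,n_d)$, and by Lemma \ref{lemma compatibility restriction whittaker model} the restriction map $W\mapsto W_{|G_{n-n_1}}$ equips it with a degenerate Whittaker model with respect to $\psi_{n_2,\dots,n_d}$; the triviality of $\psi_{n_1,\dots,n_d}$ on $N_n^\theta$ automatically forces triviality of $\psi_{n_2,\dots,n_d}$ on $N_{n-n_1}^\theta$.

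To conclude, restricting the $G_{n-n_1}^\theta$-invariant form on $\widetilde{\pi}^{[n_1]}$ to $\SL_{n-n_1}(E)$ exhibits at least one $\SL_{n-n_1}(F)$-distinguished member of the L-packet $\La(\widetilde{\pi}^{[n_1]}) = \La(\pi)^{[n_1]}$. Applying Theorem \ref{theorem main p-adic} at the level of $\SL_{n-n_1}$ then identifies the distinguished members of this packet as precisely those with a degenerate Whittaker model of type $(n_2,\dots,n_d)$ trivial on $N_{n-n_1}^\theta$; in particular, $\pi^{[n_1]}(\psi_{n_2,\dots,n_d})$ is $\SL_{n-n_1}(F)$-distinguished, closing the induction. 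The main obstacle I anticipate is the lifting step: securing the $G_n^\theta$-distinguished $\widetilde{\pi}$ with the desired Whittaker character requires the character-twist argument and the extension procedure from the proof of Theorem \ref{theorem main p-adic}, after which each subsequent step is essentially a direct invocation of results that are already on the table.
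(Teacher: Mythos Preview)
Your proposal is correct and follows essentially the same approach as the paper: lift to a $G_n^\theta$-distinguished $\widetilde{\pi}$ via a character twist (the paper cites \cite[Lemma 3.2]{ap18} for this), pass distinction to the highest shifted derivative using \cite[Proposition 2.4]{mat14}, and then invoke Theorem \ref{theorem main p-adic} on the derived packet. The only cosmetic difference is that you organize the argument as an induction reducing to the single step $k=2$, whereas the paper handles all $k$ at once by directly iterating the derivative-distinction step from \cite{mat14}.
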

\begin{proof}
According to \cite[Lemma 3.2]{ap18}, up to twisting $\widetilde{\pi}$ by an appropriate character we can suppose that it is $\GL_n(F)$-distinguished. Then $\widetilde{\pi}^{[n_1,\dots,n_{k-1}]}$ is distinguished as we already saw (cf. proof of Proposition \ref{proposition mat14}). Now $\pi^{[n_1,\dots,n_{k-1}]}(\psi_{n_k,\dots,n_d})$ belongs to 
$\La(\pi^{[n_1,\dots,n_{k-1}]})$ and it has a degenerate Whittaker model with respect to the distinguished character $\psi_{n_k,\dots,n_d}$, hence the result follows from 
Theorem \ref{theorem main p-adic}.
\end{proof}

Proposition \ref{proposition dist rep with given wh inside derivative} can be strengthened for Speh representations.

\begin{theorem}\label{theorem dist rep with given wh inside generic of Speh}
Let $\tau$ be a generic representation of $G_r$ and let $\psi_i$ be a non-degenerate character of $N_{r}$ trivial 
on $N_r^\theta$ for $i=1,\dots,d$. Fix  
$1\leq k \leq d$, then $\pi(\psi_{1,\dots,d})\in \La(\Sp(d,\tau))$ is $\SL_n(F)$-distinguished if and only if 
$\pi^{[r^{d-k}]}(\psi_{d-k+1,\dots,d})\in \La(\Sp(k,\tau))$ is $\SL_{kr}(F)$-distinguished. 
\end{theorem}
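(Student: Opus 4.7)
The plan is to combine Proposition \ref{proposition dist rep with given wh inside derivative} and Theorem \ref{theorem main p-adic} with the Speh-specific part of Matringe's classification \cite{mat14} of distinguished unitary representations.

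The forward direction is immediate from Proposition \ref{proposition dist rep with given wh inside derivative}. Specializing there to the partition $(n_1,\dots,n_d)=r^d$ and substituting $j:=d-k+1$ for the index ``$k$'' of that proposition, one has $\pi^{[n_1,\dots,n_{j-1}]}=\pi^{[r^{d-k}]}$ and $\psi_{n_j,\dots,n_d}=\psi_{d-k+1,\dots,d}$. The proposition then gives exactly that $\SL_n(F)$-distinction of $\pi(\psi_{1,\dots,d})$ implies $\SL_{kr}(F)$-distinction of $\pi^{[r^{d-k}]}(\psi_{d-k+1,\dots,d})$.

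For the converse, suppose $\pi^{[r^{d-k}]}(\psi_{d-k+1,\dots,d})\in\La(\Sp(k,\tau))$ is $\SL_{kr}(F)$-distinguished. By \cite[Lemma 3.2]{ap18}, there is a character $\chi$ of $E^\times$ such that the twist $\Sp(k,\chi\tau)=\chi\otimes\Sp(k,\tau)$ is $\GL_{kr}(F)$-distinguished. The classification of distinguished unitary representations in \cite{mat14} then forces $\chi\tau$, hence $\tau$, to be Galois-conjugate-self-dual. Invoking the same classification in the opposite direction, once the conjugate-self-duality of $\tau$ is known, for any $m\geq 1$ there is a character $\eta$ (possibly depending on $m$, chosen to match the Asai-sign/parity condition attached to $m$) such that $\Sp(m,\eta\tau)$ is $\GL_{mr}(F)$-distinguished. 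Taking $m=d$ produces an $\SL_n(F)$-distinguished representation inside $\La(\Sp(d,\tau))=\La(\eta\otimes\Sp(d,\tau))$. We now apply Theorem \ref{theorem main p-adic}: the character $\psi_{1,\dots,d}$ is of type $r^d$, and because each $\psi_i$ is trivial on $N_r^\theta$ while $\psi_{1,\dots,d}$ depends only on the block-diagonal entries of $N_n$, it is trivial on $N_n^\theta=N_n(F)$. Since $\La(\Sp(d,\tau))$ now contains a distinguished representation, Theorem \ref{theorem main p-adic} identifies $\pi(\psi_{1,\dots,d})$ as $\SL_n(F)$-distinguished.

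The main obstacle is the middle step of the converse, in which distinction (up to a character twist) is lifted from $\Sp(k,\tau)$ to $\Sp(d,\tau)$. Proposition \ref{proposition mat14} and \cite[Proposition 2.4]{mat14} only supply the easy direction of this lifting. The converse direction relies on the specifically Speh-adapted portion of \cite{mat14}, which reduces $\GL_{mr}(F)$-distinction of $\Sp(m,\tau)$, up to a twist of $\tau$, to a condition on $\tau$ alone that is independent of $m$.
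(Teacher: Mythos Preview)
Your proposal is correct and follows essentially the same route as the paper. The only cosmetic difference is in the converse step: the paper invokes \cite[Theorem 2.13]{mat14} directly to run the chain ``$\Sp(k,\tau)$ distinguished up to twist $\Rightarrow$ $\tau$ distinguished up to twist $\Rightarrow$ $\Sp(d,\tau)$ distinguished up to twist'' in one stroke, whereas you unpack this passage through conjugate-self-duality of $\tau$ and an Asai-parity twist $\eta$; the concluding appeal to Theorem \ref{theorem main p-adic} is identical.
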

\begin{proof}
One direction follows from Proposition \ref{proposition dist rep with given wh inside derivative}. Conversely suppose that
\[\pi^{[r^{d-k}]}(\psi_{d-k+1,\dots,d})\in \La(\Sp(k,\tau))\] is $\SL_{kr}(F)$-distinguished. 
Then up to a twist $\Sp(k,\tau)$, hence $\tau$, and hence $\Sp(d,\tau)$ is distinguished, thanks to \cite[Theorem 2.13]{mat14}. But then because $\pi(\psi_{1,\dots,d})\in \La(\Sp(d,\tau))$ has a 
$\psi_{1,\dots,d}$-degenerate Whittaker model and that $\psi_{1,\dots,d}$ is trivial on $N_n^\theta$, we deduce that 
$\pi(\psi_{1,\dots,d})$ is $\SL_n(F)$-distinguished, thanks to Theorem \ref{theorem main p-adic}.
\end{proof}

We will give the global analogue of this result in Theorem \ref{theorem SL analogue of yam thm 1.2}.

\section{Archimedean prerequisites for the global theory}

Here $E=\C$ or $\R$ and by abuse of notation we write $G=G(E)$ for any algebraic group defined over $E$. We set 
$|a+ib|_{\C}=a^2+b^2$ and denote by $|\ |_{\R}$ the usual absolute value on $\R$. We then denote by $\nu_E$ the character of $G_n$ 
obtained by composing $|\ |_E$ with $\det$. For $G$ a reductive subgroup of $G_n$ we write $\SAF(G)$ for the category of smooth admissible Fréchet representations of $G$ of moderate growth as in \cite{ags15}, in which we work.
We use the same product notation for parabolic induction in $\SAF(G_n)$ as in \cite{ags15}.

We only consider unitary characters of $N_n$. The non-degenerate characters of $N_n$ are of the form 
\[\psi_{\l}:\begin{pmatrix} 1 & z_1 & \dots & \dots  & . \\  & 1 & z_2  &  \dots & . \\ & & \ddots & \ddots & .  \\ & & &  1 & z_{n-1} 
\\  & & & &  1 \end{pmatrix}\mapsto \exp(i \sum_{i=1}^{n-1} \Re(\l_i z_i))\] with $\l_i\in E^*$. Then for a partition $(n_1,\dots,n_r)$ of $n$ and non-degenerate characters $\psi_{n_i}$ of $N_{n_i}$ we define the degenerate character $\psi_{n_1,\dots,n_d}$ of $N_n$ as in Section \ref{section p-adic nd Whittaker models} and we also write $\psi_{1,\dots,d}:=\psi_{n_1,\dots,n_d}$ when all the $n_i$'s are equal. We again say $\psi_{n_1,\dots,n_d}$ is of type $(n_1,\dots,n_d)$ so that the set of characters of a given type forms a single $T_n$-conjugacy class. We call a member of this conjugacy class a 
 degenerate character of type $(n_1,\dots,n_d)$. For a degenerate character $\psi$ of 
 $N_n$ and an irreducible representation $\widetilde{\pi}$ of $G_n$, by a $\psi$-Whittaker functional, we mean a non-zero continuous linear form $L$ from $\widetilde{\pi}$ to 
 $\C$ satisfying \[L(\widetilde{\pi}(n)v)=\psi(n)L(v)\] for $n\in N_n$ and $v\in \widetilde{\pi}$. We will say that $\widetilde{\pi}$ has a unique $\psi$-Whittaker model if the space of $\psi$-Whittaker functionals on the space of $\widetilde{\pi}$ is one-dimensional.

\subsection{The Tadi\'c classification of the unitary dual of $G_n$}

We recall that irreducible square-integrable representations of $G_n$ for $n\geq 1$ exist only when 
$n=1$ if $E=\mathbb C$ and when $n=1$ or $2$ if $E=\mathbb R$. When $n=1$ these are just the unitary characters of $E^\times$. For $d\in \N$ and an irreducible square-integrable representation $\d$ of $G_n$ ($n=1$ or $n \in \{1,2\}$ depending on $E$ is $\mathbb C$ or $\mathbb R$) we denote by \[\Sp(d,\d)=\LQ(\nu_E^{(d-1)/2}\d \times \dots \times \nu_E^{(1-d)/2}\d)\] the Langlands quotient of 
$\nu_E^{(d-1)/2}\d \times \dots \times \nu_E^{(1-d)/2}\d$. In particular, $\Sp(d,\chi)=\chi\circ \det$ when 
$\chi$ is a unitary character of $G_1$. By \cite{tad09}, the 
representations \[\pi(\Sp(d,\d),\alpha):=\nu^\alpha \Sp(d,\d)\times \nu^{-\alpha} \Sp(d,\d)\] are irreducible unitary when $\alpha\in (0,1/2)$, and any irreducible representation $\pi$ of $G_n$ can be written in a unique manner as a commutative product 
\[\widetilde{\pi}=\prod_{i=1}^r\Sp(d_i,\d_i)\prod_{j=r+1}^s \pi(\Sp(d_j,\d_j),\alpha_j).\] 
When all the $d_i$ and $d_j$ are equal to one, the representation 
\[\tau=\prod_{i=1}^r \d_i\prod_{j=r+1}^s \pi(\d_j,\alpha_j)\] is generic unitary (it has a unique $\psi$-Whittaker model for any non-degenerate character $\psi$ of $N_n$), according to \cite[p.4]{jac09}, and we set 
\[\Sp(d,\tau)=\prod_{i=1}^r\Sp(d,\d_i)\prod_{j=r+1}^s \pi(\Sp(d,\d_j),\alpha_j)\] which is thus an irreducible unitary representation. 

We note that according to the proof of \cite[4.1.1]{gs13}, which refers to \cite{vog86} and \cite{ss90}, a Speh representation 
$\Sp(d,\d)$ for $\d$ an irreducible square-integrable representation of $G_2$ is the same thing as the Speh representations of 
Vogan's classification as presented in \cite[4.1.2 (c)]{ags15}. Hence the Vogan classification as stated in 
\cite[4.1.2]{ags15} is immediately related to that of Tadi\'c:

\begin{itemize} 
\item the unitary characters of \cite[4.1.2 (a)]{ags15} are the representations 
of the form $\Sp(d,\chi)$ for $\chi$ a unitary character of $G_1$,
\item the Stein complementary series of \cite[4.1.2 (b)]{ags15} are the representations 
of the form $\pi(\Sp(d,\chi),\alpha)$ for $\chi$ a unitary character of $G_1$, 
\item the Speh representations of \cite[4.1.2 (c)]{ags15} are the representations 
of the form $\Sp(d,\d)$ for $\d$ an irreducible square-integrable representation of $G_2$,
\item the Speh complementary series of \cite[4.1.2 (d)]{ags15} are the representations 
of the form $\pi(\Sp(d,\d),\alpha)$ for $\d$ an irreducible square-integrable representation of $G_2$.
\end{itemize}

The third and fourth cases occur only when $E=\R$.

\subsection{Degenerate Whittaker models of irreducible unitary representations}\label{subsection degenerate archimedean whittaker models}

In this section we recall the results of Aizenbud, Gourevitch, and Sahi on degenerate Whittaker models for $\GL_n(E)$ for $E=\C$ or $\R$. We believe that with the material developed by these authors, together with the real analogue of Ok's result due to Kemarsky \cite{kem15}, the results obtained in \cite{mat14} and Section \ref{section non-archimedean theory} are in reach. However, being inexperienced in such matters, we leave this for experts, and simply recall immediate implications of the results in 
\cite{ags15} that we will need for our global applications. 

In \cite{sah89}, to any irreducible representation $\widetilde{\pi}$ 
of $G_n$, Sahi attached an irreducible representation $A(\widetilde{\pi})$ of $G_{n-n_1}$ for some $0<n_1\leq n$, \textit{the adduced representation of $\widetilde{\pi}$} and proved that it satisfied 
\[A(\prod_{i=1}^r\Sp(d_i,\d_i)\prod_{j=r+1}^s \pi(\Sp(d_j,\d_j),\alpha_j))=\prod_{i=1}^rA(\Sp(d_i,\d_i))\prod_{j=r+1}^s A(\pi(\Sp(d_j,\d_j),\alpha_j))\] 
with respect to the Tadi\'c classification. The adduced representation is the archimedean highest shifted derivative, and from \cite{sah90,gs13,ags15} (see \cite[Section 4]{ags15}) one has 
\begin{equation}\label{equation adduced} A(\prod_{i=1}^r\Sp(d_i,\d_i)\prod_{j=r+1}^s \pi(\Sp(d_j,\d_j),\alpha_j))\end{equation}
\[=\prod_{i=1}^r \Sp(d_i-1,\d_i)\prod_{j=r+1}^s \pi(\Sp(d_j-1,\d_j),\alpha_j).\]
 
One can then take the adduced of the adduced representation of the irreducible unitary representation $\widetilde{\pi}$ and so on, and 
obtain the "depth sequence" $\overline{n}:=(n_1,\dots,n_d)$ attached to $\widetilde{\pi}$, which forms a partition of $n$. 
We call this depth sequence the \textit{type of $\widetilde{\pi}$}. The combination of \cite[Theorem A]{gs13} and \cite[Theorem 4.2.3]{ags15} says:

\begin{theorem}\label{theorem archimedean partition}
Let $\widetilde{\pi}$ be an irreducible unitary representation of $G_n$ of type $(n_1,\dots ,n_d)$, 
and $\psi$ be any character of $N_n$ of type $(n_1,\dots ,n_d)$, then $\widetilde{\pi}$ has a unique degenerate $\psi$-Whittaker model.
\end{theorem}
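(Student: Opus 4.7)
The plan is to deduce the theorem by an induction on the depth $d$ of the type, mirroring the $p$-adic strategy of Section \ref{section p-adic nd Whittaker models} with the adduced representation $A(\widetilde\pi)$ playing the role of the highest shifted derivative.

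In the base case $d=1$ the type is $(n)$, which by the multiplicativity formula \eqref{equation adduced} forces every Tadi\'c factor of $\widetilde\pi$ to have $d_i=1$. Thus $\widetilde\pi$ is a commutative parabolic induction of unitary characters, of discrete series on $G_2$, and of their complementary series; such a representation is generic unitary, and admits a unique $\psi$-Whittaker functional in $\SAF(G_n)$ for every non-degenerate character $\psi$ of $N_n$ (existence is due to Jacquet, uniqueness is classical going back to Shalika).

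For the inductive step, the goal is to match $\psi_{n_1,\dots,n_d}$-Whittaker functionals on $\widetilde\pi$ with $\psi_{n_2,\dots,n_d}$-Whittaker functionals on $A(\widetilde\pi)$. I would construct an archimedean Kirillov-type filtration of $\widetilde\pi|_{P_n}$ whose top quotient is essentially $A(\widetilde\pi)\otimes\psi_{n_1}$ smoothly induced from $N_{n,n_1}$ to $P_n$. Restricting a $\psi$-Whittaker functional to this top layer and invoking Frobenius reciprocity in $\SAF$ would reduce the statement to $A(\widetilde\pi)$, which by the inductive hypothesis admits a unique $\psi_{n_2,\dots,n_d}$-Whittaker model; conversely, any such functional on $A(\widetilde\pi)$ extends to a $\psi$-Whittaker functional on $\widetilde\pi$ through the same realization.

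The main obstacle is the analytic setup of such a filtration, together with the verification that the lower layers carry no $\psi$-Whittaker functionals of the prescribed depth. In the $p$-adic setting these facts are immediate consequences of the exactness of the Bernstein–Zelevinsky functors; in the archimedean setting they are delicate and are essentially the content of \cite[Theorem A]{gs13}, which identifies the adduced representation with the top of the archimedean derivative filtration and thereby gives existence, and of \cite[Theorem 4.2.3]{ags15}, which establishes multiplicity one for degenerate Whittaker models of irreducible $\SAF$-representations by controlling the Schwartz homology of the lower filtration steps. Assembling these two inputs into the induction yields the theorem.
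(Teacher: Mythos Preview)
Your proposal is correct and ultimately coincides with the paper's treatment: the paper does not give an independent argument but simply records this theorem as the combination of \cite[Theorem A]{gs13} (existence) and \cite[Theorem 4.2.3]{ags15} (uniqueness), which are precisely the two inputs you invoke. The inductive scaffolding you add is harmless but superfluous, since \cite[Theorem 4.2.3]{ags15} already delivers multiplicity one directly without needing to descend through the adduced representation.
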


For an irreducible representation $\pi$ of $\SL_n(E)$, the notion of a degenerate Whittaker model is defined similarly, though this time 
the notion does depend on the $T_n^\circ$-conjugacy class of the degenerate character $\psi$ and not just its type. The L-packet of $\pi$ is defined as in the $p$-adic case, and we refer 
to \cite[End of Section 2]{hs12}. Note that \cite{hs12} deals with Harish-Chandra modules but their results remain valid in the context of $\SAF(G_n)$, thanks to the Casselman-Wallach equivalence of categories (see \cite[Chapter 11]{wal88}). 
 If $\widetilde{\pi}$ is an irreducible unitary representation of $G_n$, it follows from \cite[Theorem A]{gs13} that the type of $\widetilde{\pi}$ depends only on $\La(\widetilde{\pi})$, and \textit{we define the type of an irreducible unitary representation $\pi$ of $G_n^\circ$ to be that of any irreducible representation $\widetilde{\pi}$ of $G_n$ such that 
 $\pi\in \La(\widetilde{\pi})$}.

\begin{remark}
If $\widetilde{\pi}$ is an irreducible representation of $G_n^\circ$, then $\widetilde{\pi}_{|G_n^\circ}$ contains an irreducible unitary representation if and only if it is unitary up to a character twist.
\end{remark} 
 
As in the $p$-adic case, Theorem \ref{theorem archimedean partition} has the following consequence. 

\begin{corollary}\label{corollary existence of deg Whitt model and multiplicity one inside arch L packets}
Let $\widetilde{\pi}$ be a smooth irreducible unitary representation of $G_n$ of type $(n_1,\dots ,n_d)$. Then the group 
$\diag(E^\times,I_{n-1})$ acts transitively on $\La(\widetilde{\pi})$ and every $\pi\in \La(\widetilde{\pi})$ has a (necessarily unique) degenerate $\psi$-Whittaker model for some character $\psi$ of $N_n$ of type $(n_1,\dots ,n_d)$. Moreover, $\widetilde{\pi}_{|G_n^\circ}$ is multiplicity free.
\end{corollary}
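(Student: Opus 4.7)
The plan is to transport the non-archimedean argument (which proved Lemma \ref{lemma existence of nd Whitt model for p-adic L-packets} and Proposition \ref{proposition multiplicity one inside non arch L-packets}) verbatim to the archimedean setting, substituting Theorem \ref{theorem archimedean partition} for the Bernstein--Zelevinsky uniqueness of degenerate Whittaker models that was available in the $p$-adic case.

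\textbf{Step 1: transitivity.} The restriction $\widetilde{\pi}_{|G_n^\circ}$ decomposes as a direct sum of isotypic components indexed by $\La(\widetilde{\pi})$, and $G_n$ permutes these components (and in fact transitively). In the category of Harish-Chandra modules this transitivity is the archimedean analogue of Gelbart--Knapp and is given by \cite{hs12}; the Casselman--Wallach equivalence (\cite[Chapter 11]{wal88}) transfers it to $\SAF(G_n)$. Since $G_n^\circ$ stabilizes each component and $\det:\diag(E^\times,I_{n-1})\to G_n/G_n^\circ\simeq E^\times$ is surjective, the subgroup $\diag(E^\times,I_{n-1})$ already acts transitively on $\La(\widetilde{\pi})$.

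\textbf{Step 2: existence of a degenerate Whittaker model on each constituent.} Fix a character $\psi$ of type $(n_1,\dots,n_d)$ and take a non-zero $L\in\Wh(\widetilde{\pi},\psi)$, which is unique up to scalar by Theorem \ref{theorem archimedean partition}. Writing $\widetilde{\pi}_{|G_n^\circ}=\bigoplus_{\pi}V_\pi^{\oplus m(\pi)}$, the functional $L$ restricts non-trivially to at least one $V_{\pi_0}$. For an arbitrary $\pi\in\La(\widetilde{\pi})$, Step 1 provides $t\in\diag(E^\times,I_{n-1})$ such that $\widetilde{\pi}(t)$ sends the $\pi_0$-isotypic component onto the $\pi$-isotypic component; then $v\mapsto L(\widetilde{\pi}(t^{-1})v)$ restricts non-trivially to $V_\pi$ and is a Whittaker functional for the conjugate character $n\mapsto\psi(t^{-1}nt)$. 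This conjugate is again of type $(n_1,\dots,n_d)$, because conjugation by $\diag(t_0,I_{n-1})$ only rescales the super-diagonal entries of the top-left diagonal block of $N_n$ and preserves the block structure dictated by the partition.

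\textbf{Step 3: multiplicity one and uniqueness.} Since $N_n\subset G_n^\circ$, the decomposition of $\widetilde{\pi}_{|G_n^\circ}$ yields, for every character $\psi$ of type $(n_1,\dots,n_d)$,
\[\Wh(\widetilde{\pi},\psi)=\bigoplus_{\pi\in \La(\widetilde{\pi})} m(\pi)\,\Wh(\pi,\psi).\]
Theorem \ref{theorem archimedean partition} makes the left-hand side one-dimensional, so any $\pi\in\La(\widetilde{\pi})$ that admits a $\psi$-Whittaker functional must satisfy $m(\pi)=\dim\Wh(\pi,\psi)=1$. By Step 2, each $\pi\in\La(\widetilde{\pi})$ admits such a functional for an appropriate $\psi$ of type $(n_1,\dots,n_d)$, so $m(\pi)=1$ for all $\pi$, proving multiplicity freeness and simultaneously the uniqueness of the degenerate Whittaker model on each $\pi$.

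The only non-routine point is the transitivity in Step 1, which is among the ``elementary and probably standard'' facts the introduction promises to verify for archimedean $\SL_n$-packets using \cite{ags15} and \cite{hs12}; once it is available, the rest of the argument is purely formal bookkeeping parallel to the $p$-adic case.
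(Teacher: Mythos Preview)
Your proposal is correct and follows essentially the same approach as the paper, which does not spell out a proof but simply remarks that the corollary follows from Theorem \ref{theorem archimedean partition} ``as in the $p$-adic case'' (i.e., exactly as Lemma \ref{lemma existence of nd Whitt model for p-adic L-packets} and Proposition \ref{proposition multiplicity one inside non arch L-packets} were deduced). You have merely written out in detail the formal bookkeeping that the paper leaves implicit: transitivity of $\diag(E^\times,I_{n-1})$ via \cite{hs12} and Casselman--Wallach, existence of a degenerate Whittaker model on each constituent by conjugating the unique functional on $\widetilde{\pi}$, and multiplicity one from the one-dimensionality of $\Wh(\widetilde{\pi},\psi)$.
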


We note that the computation of the adduced representation given in Equation (\ref{equation adduced}) implies:

\begin{theorem}[Aizenbud, Gourevitch, and Sahi]\label{theorem ags type of speh reps}
Let $\tau$ be an irreducible generic representation of $G_r$. The Speh representation $\Sp(d,\tau)$ has type $r^d$, and conversely an irreducible unitary representation of $G_n$ of type $r^d$ is of the form $\Sp(d,\tau)$ for some unitary generic representation $\tau$ of $G_r$.
\end{theorem}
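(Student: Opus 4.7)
The plan is to read off the type directly from the Tadi\'c classification by iterating Equation \eqref{equation adduced}. For the forward direction, I would write the generic unitary $\tau$ in Tadi\'c form as $\prod_{i=1}^{a} \d_i \prod_{j=a+1}^{b} \pi(\d_j,\alpha_j)$, each factor having Tadi\'c exponent $1$ (i.e.\ $\d_i$ is a unitary character of $G_1$ or an irreducible square-integrable representation of $G_2$). By the definition of $\Sp(d,\tau)$ recalled in the first subsection,
\[\Sp(d,\tau)=\prod_{i=1}^{a}\Sp(d,\d_i)\prod_{j=a+1}^{b}\pi(\Sp(d,\d_j),\alpha_j).\]
Applying \eqref{equation adduced} iteratively, the $k$-fold adduced representation equals
\[\prod_{i=1}^{a}\Sp(d-k,\d_i)\prod_{j=a+1}^{b}\pi(\Sp(d-k,\d_j),\alpha_j),\]
which has degree $(d-k)r$. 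Thus the degree drops by exactly $r$ at each of the $d$ successive iterations (after which the representation becomes trivial), giving depth sequence $r^d$.

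For the converse, I would take an arbitrary irreducible unitary $\widetilde{\pi}$ of $G_n$ of type $r^d$ and write its Tadi\'c decomposition $\widetilde{\pi}=\prod_i\Sp(d_i,\d_i)\prod_j\pi(\Sp(d_j,\d_j),\alpha_j)$. Denote by $r_i$ (resp.\ $r_j$) the degree of $\d_i$ (resp.\ $\d_j$). Repeated application of \eqref{equation adduced} shows that the $k$-fold adduced representation equals
\[\prod_{i:\,d_i>k}\Sp(d_i-k,\d_i)\prod_{j:\,d_j>k}\pi(\Sp(d_j-k,\d_j),\alpha_j),\]
so the degree drop at step $k+1$ is
\[n_{k+1}=\sum_{i:\,d_i>k}r_i+2\sum_{j:\,d_j>k}r_j.\]
The hypothesis that the type is $r^d$ says the depth sequence has length exactly $d$ and satisfies $n_1=\cdots=n_d=r$. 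The length condition forces $\max_{i,j}(d_i,d_j)=d$, and subtracting $n_d=r$ from $n_1=r$ yields $\sum_{i:\,d_i<d}r_i+2\sum_{j:\,d_j<d}r_j=0$; hence every $d_i=d_j=d$. Setting $\tau=\prod_i\d_i\prod_j\pi(\d_j,\alpha_j)$ then gives a generic unitary representation of $G_r$ with $\widetilde{\pi}=\Sp(d,\tau)$.

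The only mildly subtle step is the converse's extraction of $d_i=d_j=d$ from the single assumption that the depth sequence equals $r^d$; this is the short combinatorial consequence of \eqref{equation adduced} indicated above, and no input beyond the adduced formula and the Tadi\'c classification is required.
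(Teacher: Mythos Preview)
Your proof is correct and is exactly the argument the paper has in mind: the paper's ``proof'' is the single sentence preceding the theorem, namely that the computation of the adduced representation in Equation \eqref{equation adduced} implies the result, and you have simply spelled out that implication using the Tadi\'c classification. There is no alternative route here to compare.
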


We end by giving the archimedean analogue of Definition \ref{definition irrep with fixed wh model in the derivative} for 
Speh representations.

\begin{definition}\label{definition irrep with fixed wh model in the archimedean derivative}
Let $\pi$ be an irreducible unitary representation of $G_n^\circ$ of type $r^d$, and let $\tau$ be an irreducible unitary generic representation of $G_r$ such that  $\pi \in \La(\Sp(d,\tau))$. For $\psi_{1,\dots,d}$ a character of $N_n$ of type $r^d$, 
we denote by 
$\pi^{[r^{d-k}]}(\psi_{d-k+1,\dots,d})$ the unique representation in $\La(\Sp(k,\tau))$ with a $\psi_{d-k+1,\dots,d}$-degenerate Whittaker model. 
\end{definition}

\begin{remark}
The representation $\pi^{[r^{d-k}]}(\psi_{d-k+1,\dots,d})$ above depends only on $\La(\pi)$.
\end{remark}

\section{The Global setting}\label{section global}

In this section, $E/F$ is a quadratic extension of number fields with associated Galois involution $\theta$. We denote by $\A_E$ and $\A_F$ the rings of adeles of $E$ and $F$ respectively. We denote by $\GL_n(\A_E)^1$ the elements of 
$\GL_n(\A_E)$ which have determinant of adelic norm equal to $1$, and for any subgroup $H$ of 
$\GL_n(\A_E)$, by $H^1$ we denote the intersection of $H$ with $\GL_n(\A_E)^1$. We recall that $\A_F^\times=\A_F^1\times (\A_F)_{>0}$, where 
$(\A_F)_{>0}$ is $\R_{>0}\otimes_{\Q} 1\subset \R\otimes _{\Q} F$ sitting inside $\A_F$. In particular passing to 
the groups of unitary characters we have $\widehat{\A_F^\times}=\widehat{\A_F^1}\times \widehat{(\A_F)_{>0}}$, and for 
$\l \in \R$ we denote by $\alpha_{\l}$ the unitary character of $\A_F^\times$ corresponding to 
$(\alpha,(|\ |_{\A_F}^{i\l})_{|\widehat{(\A_F)_{>0}}})\in \widehat{\A_F^1}\times \widehat{(\A_F)_{>0}}$. Namely extending 
$\alpha_0$ is the extension of $\alpha$ which is trivial on $\widehat{(\A_F)_{>0}}$ and $\alpha_{\l}=\alpha_0 |\ |_{\A_F}^{i\l}$. In particular 
$\alpha_\l$ is automorphic if and only if $\alpha\in \widehat{F^\times \backslash \A_F^1}$.

\subsection{Square integrable automorphic representations and their L-packets}

For $\omega\in \widehat{E^\times \backslash \A_E^\times}$, we denote by
\[L^{2}(\A_E^\times \GL_n(E)\backslash \GL_n(\A_E),\omega )\]
the space of 
smooth $L^2$-automorphic forms on which the center $\A_E^\times$ of $\GL_n(\A_E)$ acts by $\omega$, and by 
\[L_d^{2}(\A_E^\times \GL_n(E)\backslash \GL_n(\A_E),\omega)\] its discrete part. We then denote by 
$L_d^{2,\infty}(\A_E^\times \GL_n(E)\backslash \GL_n(\A_E),\omega)$ the dense $\GL_n(\A_E)$-submodule of $L_d^{2}(\A_E^\times \GL_n(E)\backslash \GL_n(\A_E),\omega)$ consisting of smooth automorphic forms (cf. \cite[Lecture 2]{cog04}). We say 
that $\widetilde{\pi}$ is a \textit{square-integrable automorphic representation of $\GL_n(\A_E)$} if it is a closed (for the Fréchet topology) irreducible $\GL_n(\A_E)$-submodule of $L_d^{2,\infty}(\A_E^\times \GL_n(E)\backslash \GL_n(\A_E),\omega)$ for some Hecke character $\omega$. 
The space $L_d^{2,\infty}(\A_E^\times \GL_n(E)\backslash \GL_n(\A_E),\omega)$ contains the space of smooth cusp forms
\[\mathcal{A}^{\infty}_0(\A_E^\times \GL_n(E)\backslash \GL_n(\A_E),\omega)\] 
as a $\GL_n(\A_E)$-invariant subspace. A \textit{cuspidal automorphic representation of $\GL_n(\A_E)$} is a closed irreducible $\GL_n(\A_E)$-submodule of $\mathcal{A}^{\infty}_0(\A_E^\times \GL_n(E)\backslash \GL_n(\A_E),\omega)$, for some Hecke character $\omega$.

Let $\sigma$ be a cuspidal automorphic representation of $\GL_r(\A_E)$, and  
\[\widetilde{\pi}=\Sp(d,\sigma)=\otimes'_v \Sp(d,\sigma_v)\] be the restricted tensor product of the representations $\Sp(d,\sigma_v)$ for $v$ varying through the places of $E$. By \cite{jac84}, this is a square-integrable automorphic representation of $\GL_n(\A_E)$, where $n=dr$. By \cite{mw89}, any irreducible square-integrable automorphic representation of $\GL_n(\A_E)$ is of this form for a unique pair $(\sigma,d)$, and moreover $\Sp(d,\sigma)$ appears with multiplicity one in $L_d^{2,\infty}(\A_E^\times \GL_n(E)\backslash \GL_n(\A_E),\omega)$  (this of course was already known for $d=1$ by the pioneering independent results of Piatetski-Shapiro and Shalika). 

We define the spaces $L^{2,\infty}(\SL_n(E)\backslash \SL_n(\A_E))$ and $\mathcal{A}^{\infty}_0(\SL_n(E)\backslash \SL_n(\A_E))$ in the same way we defined their $\GL$-analogues. Also, similarly the notions of square-integrable and cuspidal automorphic representations of $\SL_n(\A_E)$ are defined.

We recall from \cite[Chapter 4]{hs12} (see in particular \cite[Remark 4.23]{hs12} for square integrable representations) the following facts. If $\pi$ is a square-integrable automorphic representation of $\SL_n(\A_E)$, then there is a square-integrable automorphic representation $\widetilde{\pi}$ of $\GL_n(\A_E)$ such that $\pi$ is a submodule of $\Res(\widetilde{\pi})$ where $\Res$ is the restriction of automorphic forms from 
$\GL_n(\A_E)$ to $\SL_n(\A_E)$, and such a $\widetilde{\pi}$ is unique up to twisting by an automorphic character of $\A_E^\times$. We denote by $\La(\pi)$ or $\La(\widetilde{\pi})$ the set of irreducible submodules of 
$\Res(\widetilde{\pi})$.

\subsection{Degenerate Whittaker models and square-integrable $\La$-packets}\label{section global degenerate whittaker models}

Let $n=dr$. Let $\sigma$ be a smooth unitary cuspidal automorphic representation of $\GL_r(\A_E)$ and let 
$\widetilde{\pi}=\Sp(d,\sigma)$ be the associated square-integrable automorphic representation of $\GL_n(\A_E)$.
We set $U_{r^d}$ to be the unipotent radical of the parabolic subgroup of type $r^d$ of $\GL(n)$, denoted by $P_{r^d}$. Let 
\[\psi_{1,\dots,d}(\diag(n_1,\dots,n_d)u)=\prod_{i=1}^{d} \psi_i(n_i)\] where $\psi_i$ is a non-degenerate character of 
$N_r(\A_E)$ trivial on $N_r(E)$ and $u\in U_{r^d}(\A_E)$. For $\varphi\in \pi$, we set 
\[p_{\psi_{1,\dots,d}}(\varphi)=\int_{N_n(E)\backslash N_n(\A_E)} \varphi(n) \psi_{1,\dots,d}^{-1}(n) dn .\]

By \cite[Corollary 3.4]{jl13}, there exists $\varphi\in \Sp(d,\sigma)$ such that 
$p_{\psi_{1,\dots,d}}(\varphi)\neq 0$: we will say that $\varphi$ \textit{has a non-zero Fourier coefficient of type} $r^d$ or 
\textit{a degenerate Whittaker model of type} $r^d$. Of course when $d=1$ this result is due to the pioneering works of Piatetski-Shapiro and Shalika. 

\begin{remark} The result \cite[Corollary 3.4]{jl13} could also be deduced by the techniques used in Section \ref{section global distinction}, using 
the $E=F\times F$-analogue of Yamana's formula \cite[Theorem 1.1]{yam15} (see Theorem \ref{theorem twisted Yamana}). Also following Section \ref{section global distinction} in the case where $E$ is split, one would conclude that 
any square-integrable representation of $\SL_n(A_E)$ in the $\La$-packet determined by $\Sp(d,\sigma)$ has a degenerate Whittaker model of type $r^d$. However for the sake of variety we offer a different proof of this fact here, using the results 
of \cite{jl13} rather than those of \cite{yam15} (or rather its split analogue).
\end{remark}

\textit{We say that a square-integrable representation $\pi$ of $\SL_n(\A_E)$ is of type $r^d$ if it belongs to $\La(\Sp(d,\sigma))$ for an irreducible (unitary) cuspidal automorphic representation $\sigma$ of $G_r(\A_E)$.}

We say that $\widetilde{\pi}$ (resp. $\pi$) has a degenerate Whittaker model of type $r^d$ if
 there is $\varphi\in \widetilde{\pi}$ (resp. $\varphi\in\pi$) with a non-zero Fourier coefficient of type $r^d$. In particular 
 $\Sp(d,\sigma)$ has a degenerate Whittaker model of type $r^d$.

We denote by $\psi$ a non-degenerate character of $N_r(\A_E)$ trivial on 
 $N_r(E)$. We set 
 \[(\mathbf{1}\otimes \psi)\begin{pmatrix} I_{n-r} & x \\ & u_1 \end{pmatrix}=\psi(u_1)\] for 
 \[\begin{pmatrix} I_{n-r} & x \\ & u_1 \end{pmatrix}\in U_{n,r}(\A_E).\] For $\varphi\in \widetilde{\pi}$, we set 
 \[\varphi_{U_{n,r},\psi}(g)=\int_{U_{n,r}(E)\backslash U_{n,r}(\A_E)}\varphi(ug)(\mathbf{1}\otimes \psi^{-1})(u)du\] for $g \in \GL_n(\A_E)$. 
 
\begin{remark}\label{remark constant term}
Note that the function $\varphi_{U_{n,r},\psi}$ is nothing but the integral of the constant term of $\varphi$ along the $(n-r,r)$ parabolic against $\psi^{-1}$ on $N_r(E) \backslash N_r(\A_E)$. By \cite[Lemma 6.1]{yam15} there is a positive character $\d$ of $\GL_{n-r}(\A_E)$ such that 
the function $\delta\otimes \varphi_{U_{n,r},\psi}$ belongs to $\Sp(d-1,\sigma)$, in particular  $(\varphi_{U_{n,r},\psi})_{|H}$ belongs to $\Res_H(\Sp(d-1,\sigma))$ (restriction of cusp forms) for any subgroup $H$ of $\GL_n(\A_E)^1$, for example 
$H=\SL_n(\A_E)$.
\end{remark}

We now can prove the following result.
 
\begin{proposition}\label{proposition degenerate whittaker model inside global L packets}
A square-integrable automorphic representation $\pi$ of $\SL_n(\A_E)$ of type $r^d$ has a degenerate Whittaker model of type $r^d$. 
\end{proposition}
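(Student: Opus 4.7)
The plan is a three-step argument: invoke \cite[Corollary 3.4]{jl13} for $\widetilde\pi=\Sp(d,\sigma)$, decompose the restriction to find an irreducible constituent with the desired Whittaker model, and transfer this non-vanishing to an arbitrary element of the $\La$-packet via the torus action.

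First, by \cite[Corollary 3.4]{jl13} there exist $\varphi_0\in\widetilde\pi$ and a character $\psi_0$ of $N_n(\A_E)$ of type $r^d$ trivial on $N_n(E)$ with $p_{\psi_0}(\varphi_0)\neq 0$. Second, I would decompose $\widetilde\pi_{|\SL_n(\A_E)}=\pi_1\oplus\cdots\oplus\pi_N$ into its finitely many irreducible summands and write $\varphi_0=\sum_j\varphi_{0,j}$; by linearity of $p_{\psi_0}$ at least one summand $\pi_{j_0}$ contains a form $\varphi_{0,j_0}$ with $p_{\psi_0}(\varphi_{0,j_0})\neq 0$, proving the proposition for the particular constituent $\pi_{j_0}$.

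Third, to handle an arbitrary $\pi\in\La(\widetilde\pi)$, I would use the right-translation action of $\GL_n(\A_E)$ on $\widetilde\pi$. Since this action normalizes $\SL_n(\A_E)$, it permutes the $\pi_j$, and the irreducibility of $\widetilde\pi$ as a $\GL_n(\A_E)$-module makes this permutation transitive. Moreover the local statements Lemma \ref{lemma existence of nd Whitt model for p-adic L-packets} and Corollary \ref{corollary existence of deg Whitt model and multiplicity one inside arch L packets} suggest that already the subtorus $\diag(\A_E^\times,I_{n-1})$ acts transitively on $\La(\widetilde\pi)$. Pick $t=\diag(a,I_{n-1})$ with $R(t)\pi_{j_0}=\pi$; the form $R(t)\varphi_{0,j_0}$ then lies in $\pi$, and a direct change of variables in the Fourier integral should yield
\[p_{\psi_0^{t^{-1}}}\!\bigl(R(t)\varphi_{0,j_0}\bigr)=\delta_B(t)^{1/2}\,p_{\psi_0}(\varphi_{0,j_0})\neq 0,\]
where $\psi_0^{t^{-1}}(n):=\psi_0(t^{-1}nt)$ is still of type $r^d$ since conjugation by a diagonal element preserves the block structure of the character.

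The main obstacle is to arrange that $\psi_0^{t^{-1}}$ stays trivial on $N_n(E)$, a condition needed for the new Fourier integral to be well defined on the automorphic quotient. Since conjugation by $t=\diag(a,I_{n-1})$ scales the $(1,j)$-entries of $n\in N_n$ by $a^{-1}$, naive triviality would force $a\in E^\times$, which is too restrictive. I would resolve this by exploiting the freedom to modify $t$ within its coset modulo $\mathrm{Stab}_{\GL_n(\A_E)}(\pi_{j_0})$, which contains $\SL_n(\A_E)\cdot Z(\A_E^\times)$, so as to absorb the non-$E$-rational component of $a$ while preserving the identity $R(t)\pi_{j_0}=\pi$; simultaneously one has the freedom to vary the initial character $\psi_0$ within the $T_n(E)$-orbit of characters of type $r^d$ trivial on $N_n(E)$. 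Carefully tracking these two sources of freedom — essentially the adelic arithmetic of $\A_E^\times/E^\times$ — is the crux of the argument.
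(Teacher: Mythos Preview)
Your Step 3 contains a genuine circularity. You correctly identify that for $t=\diag(a,I_{n-1})$ with $a\in\A_E^\times$, the conjugated character $\psi_0^{t^{-1}}$ is trivial on $N_n(E)$ only when $a\in E^\times$. Your proposed remedy is to absorb the non-rational part of $a$ into $\mathrm{Stab}(\pi_{j_0})$, but inspect what this buys: modifying by $z\in Z(\A_E^\times)$ leaves the conjugation action on $N_n$ unchanged, and modifying by $g\in\SL_n(\A_E)$ merely replaces $\varphi_{0,j_0}$ by another vector of $\pi_{j_0}$, which does not alter the problem. The only useful freedom is to replace $a$ by another representative of $aG_{\pi_{j_0}}$, and what you need is precisely that this coset meets $E^\times$ --- equivalently, that $\diag(E^\times,I_{n-1})$ already acts transitively on $\La(\widetilde\pi)$. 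But this transitivity is Corollary~\ref{corollary transitive action on L packets}, whose proof \emph{uses} the proposition you are trying to establish. The ``adelic arithmetic of $\A_E^\times/E^\times$'' you defer to is not a technicality; it is the entire content of the statement.

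The paper's proof avoids this trap by not attempting any transfer between constituents. Instead it proves the stronger claim that \emph{every} nonzero $\varphi\in\Res(\widetilde\pi)$ acquires, after right translation by some $h_0\in\SL_r(\A_E)$, a nonzero Fourier coefficient of type $r^d$. This is done by induction on $d$: for $d=1$ one unfolds as in Lemma~\ref{lemma step 1} and Proposition~\ref{proposition unfolding} (applied with $E$ in place of $F$), and for $d\ge 2$ one invokes \cite[Proposition~3.1(1)]{jl13} to find $\psi$ with $\varphi_{U_{n,r},\psi}$ nonzero on $\SL_{n-r}(\A_E)$, then applies the inductive hypothesis via Remark~\ref{remark constant term}. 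No torus action on the $\La$-packet is used; the transitivity of $E^\times$ is deduced afterwards as a corollary.
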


\begin{proof}
We will prove the stronger claim: for any $\varphi\in \widetilde{\pi}$ such that $\varphi_{|\SL_n(\A_E)}\neq 0$, there is $h_0\in \SL_r(\A_E)$ (embedded in $\SL_n(\A_E)$ in the upper left block) such that 
$\rho(h_0)\varphi$ has a non-zero Fourier coefficient of type $r^d$. If $d=1$, we are in the cuspidal (and hence generic) case and the result follows from the 
same inductive procedure of Lemma \ref{lemma step 1} and Proposition \ref{proposition unfolding}, but applied to $E$ diagonally embedded inside $E\times E$ (instead of $F\subset E$ considered there). If $d\geq 2$, by \cite[Proposition 3.1 (1)]{jl13} applied to $\varphi$ there is a non-degenerate character $\psi$ of $N_r(\A_E)$ trivial on $N_r(E)$ such that $\varphi_{U_{n,r},\psi}$ is non-zero on $\SL_{n-r}(\A_E)$ (because $N_{n,r}(E)\backslash  P_n(E)=N_{n,r}^\circ(E)\backslash  P_n^\circ(E)$ as $d\geq 2$). We conclude by induction, thanks to Remark \ref{remark constant term}.
\end{proof}
 
We have the following corollary 
of Proposition \ref{proposition degenerate whittaker model inside global L packets}

\begin{corollary}\label{corollary transitive action on L packets}
If $\widetilde{\pi}$ is an irreducible square-integrable automorphic representation of $\GL_n(\A_E)$ of type $r^d$, then $\Res(\widetilde{\pi})$ is multiplicity free. Moreover, for any automorphic character $\psi$ of $N_n(\A_E)$ of type $r^d$, the $\La$-packet
$\La(\widetilde{\pi})$ contains a unique member $\pi(\psi)$ with a $\psi$-Whittaker model, and the group $\diag(E^\times,I_{n-1})$ acts transitively on $\La(\widetilde{\pi})$.
\end{corollary}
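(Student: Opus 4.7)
The plan is to deduce all three assertions from Proposition \ref{proposition degenerate whittaker model inside global L packets} together with the local multiplicity-one results already in place, combined with the transitive action of $T_n(E)$ on automorphic characters of fixed type.

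For multiplicity-freeness, I would identify $\Res(\widetilde{\pi}) \simeq \otimes'_v \Res(\widetilde{\pi}_v)$ as abstract $\SL_n(\A_E)$-modules and invoke local multiplicity one: Proposition \ref{proposition multiplicity one inside non arch L-packets} at finite places and Corollary \ref{corollary existence of deg Whitt model and multiplicity one inside arch L packets} at archimedean ones ensure that each local restriction decomposes without multiplicity, whence so does the restricted tensor product.

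For uniqueness of $\pi(\psi)$, I would observe that a nonzero $\psi$-Fourier coefficient on $\pi = \otimes'_v \pi_v \in \La(\widetilde{\pi})$ induces a nonzero $\psi_v$-Whittaker functional on each $\pi_v$; local uniqueness (Lemma \ref{lemma irrep with fixed wh model in the derivative} at finite places and Theorem \ref{theorem archimedean partition} at infinity) then pins down $\pi_v$ inside $\La(\widetilde{\pi}_v)$, forcing global uniqueness. For existence, I would start from some $\pi_0 \in \La(\widetilde{\pi})$ equipped with a $\psi_0$-Whittaker model (furnished by Proposition \ref{proposition degenerate whittaker model inside global L packets}) and transport it to $\psi$ by a torus translation. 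After fixing a base additive character of $\A_E/E$, the automorphic characters of $N_n(\A_E)/N_n(E)$ of type $r^d$ are parameterized by $d$ tuples in $(E^\times)^{r-1}$, and one checks that $T_n(E)$ acts transitively on them by conjugation; pick $t \in T_n(E)$ with $\psi = \psi_0 \circ \mathrm{Int}(t^{-1})$. Since $t \in \GL_n(E)$, the change of variable $n \mapsto tnt^{-1}$ in the defining integral, together with left $\GL_n(E)$-invariance of any $\varphi_0 \in \pi_0$, yields $p_\psi(\rho(t)\varphi_0) = p_{\psi_0}(\varphi_0)$; moreover $\rho(t)$ permutes the irreducible $\SL_n(\A_E)$-components of $\Res(\widetilde{\pi})$, so $\rho(t)\pi_0 \in \La(\widetilde{\pi})$ is the desired $\pi(\psi)$.

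Transitivity then follows easily: every $\pi \in \La(\widetilde{\pi})$ admits \emph{some} Whittaker character $\psi_\pi$ by Proposition \ref{proposition degenerate whittaker model inside global L packets}, hence equals $\pi(\psi_\pi) = \rho(t_\pi)\pi_0$ for a suitable $t_\pi \in T_n(E)$. To descend from $T_n(E)$ to $\diag(E^\times, I_{n-1})$, I would factor $T_n(E) = \diag(E^\times, I_{n-1}) \cdot (T_n(E) \cap \SL_n(E))$ via the determinant and observe that $T_n(E) \cap \SL_n(E) \subset \SL_n(\A_E)$ stabilizes every element of $\La(\widetilde{\pi})$ pointwise. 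The main point requiring care is the covariance identity $p_\psi \circ \rho(t) = p_{\psi_0}$: one must verify that the inner automorphism by $t \in T_n(E)$ preserves the quotient Haar measure on $N_n(E) \backslash N_n(\A_E)$ and that automorphy of $\varphi_0$ correctly absorbs the leftward $t$-factor produced by the substitution.
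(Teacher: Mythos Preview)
Your proposal is correct and follows essentially the same route as the paper: local multiplicity one for the first claim, Proposition \ref{proposition degenerate whittaker model inside global L packets} plus $T_n(E)$-transitivity on automorphic characters of type $r^d$ for existence and uniqueness of $\pi(\psi)$, and a descent from $T_n(E)$ to $\diag(E^\times,I_{n-1})$ for the final claim. Your descent is phrased via the factorization $T_n(E)=\diag(E^\times,I_{n-1})\cdot T_n^\circ(E)$ with $T_n^\circ(E)\subset \SL_n(\A_E)$ fixing each component; the paper instead notes $\pi^t\simeq\pi^{t'}$ and invokes multiplicity one inside $\La(\widetilde{\pi})$---but these are the same argument in different clothing, and your version is arguably the more direct of the two.
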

\begin{proof}
Thanks to multiplicity one inside local L-packets (cf. Proposition \ref{proposition multiplicity one inside non arch L-packets} and Corollary \ref{corollary existence of deg Whitt model and multiplicity one inside arch L packets}), it follows that the representations in 
$\La(\widetilde{\pi})$ appear with multiplicity one in $\Res(\widetilde{\pi})$. Moreover, we deduce that $T_n(E)$ acts transitively on $\La(\widetilde{\pi})$: by Proposition \ref{proposition degenerate whittaker model inside global L packets} any  representation in $\La(\widetilde{\pi})$ has a degenerate Whittaker model of type $r^d$. Note that two automorphic characters of type $r^d$ of $N_n(\A_E)$ are conjugate to each other by $T_n(E)$ and this implies that for each automorphic character $\psi$ of type $r^d$ of $\N_n(\A_E)$ there is a representation $\pi(\psi)$ in $\La(\widetilde{\pi})$ with a $\psi$-Whittaker model. Moreover $\La(\widetilde{\pi})$ has at most one representation with a $\psi$-Whittaker model by local multiplicity one of degenerate Whittaker models and this implies the uniqueness of $\pi(\psi)$ in the statement. Finally for $t\in T_n(E)$ and $t^\prime=\diag(\det(t),I_{n-1})$, the representations 
$\pi^t$ and $\pi^{t^\prime}$ in $\La(\widetilde{\pi})$ are isomorphic, hence equal by multiplicity one inside $\La(\widetilde{\pi})$.
\end{proof}

\subsection{Distinguished representations and distinguished L-packets}

For $\chi\in \widehat{F^\times\backslash \A_F^\times}$ and $\omega \in \widehat{E^\times\backslash \A_E^\times}$ extending $\chi$, we denote by $\widetilde{p}_{n,\chi}$ the linear form called $\chi$-period integral on $L_d^{2,\infty}(\A_E^\times \GL_n(E)\backslash \GL_n(\A_E),\omega)$ given by 
\[\widetilde{p}_{n,\chi}(\phi)= \int_{\A_F^\times\GL_n(F)\backslash \GL_n(\A_F)}\phi(h)\chi^{-1}(\det(h))dh.\] It is 
well-defined on $\mathcal{A}^{\infty}_0(\A_E^\times\GL_n(E)\backslash \GL_n(\A_E),\omega)$ by \cite[Proposition 1]{agr93} and 
in general by \cite[Lemma 3.1]{yam15}. Indeed up to a positive constant 
$\widetilde{p}_{n,\chi}(\phi)$ is equal to \[\int_{\GL_n(F)\backslash \GL_n(\A_F)^1}\phi(h)\chi^{-1}(\det(h))dh.\] 
\textit{We say that a square-integrable automorphic representation 
\[\widetilde{\pi}\subset L_d^{2,\infty}(\A_E^\times \GL_n(E)\backslash \GL_n(\A_E),\omega)\] is $\chi$-distinguished (or simply distinguished when $\chi\equiv 1$) if $\widetilde{p}_{n,\chi}$ is non-vanishing on $\widetilde{\pi}$}. 

We denote by $p_n$ the period integral on $L_d^{2,\infty}(\SL_n(E)\backslash \SL_n(\A_E))$ given by 
\[p_n(\phi)= \int_{\SL_n(F)\backslash \SL_n(\A_F)}\phi(h)dh.\]
It is again well-defined on $\mathcal{A}^{\infty}_0(\SL_n(E)\backslash \SL_n(\A_E))$ thanks to \cite[Proposition 1]{agr93} 
and on the space $L_d^{2,\infty}(\SL_n(E)\backslash \SL_n(\A_E))$ by the arguments in \cite[Lemma 3.1]{yam15}, and 
\textit{we say that a square-integrable representation $\pi\subset L_d^{2,\infty}(\SL_n(E)\backslash \SL_n(\A_E))$ is 
distinguished if $p_n$ does not vanish on $\pi$}. 
We give another useful formula for the $\SL_n$-period integral following \cite[Proposition 3.2]{ap06}. 

\begin{proposition}\label{proposition fourier}
Let $\widetilde{\pi}$ be a square integrable automorphic representation of $\GL_n(\A_E)$. The period integral 
\[\varphi\mapsto \int_{\SL_n(F)\backslash \SL_n(\A_F)}\varphi(h)dh\] 
is given by an absolutely convergent integral on $\Res(\widetilde{\pi})$. Moreover, for any $\varphi\in \widetilde{\pi}$, we have
\[\int_{\SL_n(F)\backslash \SL_n(\A_F)}\varphi(h)dh=\sum_{\alpha} \int_{\GL_n(F)\backslash \GL_n(\A_F)^1}\varphi(h)\alpha(\det(h))dh\] where the sum is over all characters $\alpha$ of the compact abelian group $F^\times \backslash \A_F^1$.
\end{proposition}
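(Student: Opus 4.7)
This proposition generalises \cite[Proposition 3.2]{ap06} from $\SL_2$ to $\SL_n$, and the strategy is the same: prove absolute convergence of the $\SL_n$-period, fibre the integration over $\GL_n(F)\backslash\GL_n(\A_F)^1$ along the determinant map, and conclude by Fourier inversion on a compact abelian group.

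\emph{Absolute convergence.} For smooth cusp forms this follows directly from their rapid decrease on $\SL_n(F)\backslash\SL_n(\A_F)$, which is \cite[Proposition 1]{agr93}. For general square-integrable $\widetilde{\pi}$, I would adapt the majorisation technique of \cite[Lemma 3.1]{yam15}: the smooth automorphic form $\varphi$ has moderate growth, $\SL_n(F)\backslash\SL_n(\A_F)$ has finite invariant volume, and a standard cuspidal majorisation along standard parabolics reduces the absolute integrability of $\varphi$ on $\SL_n(F)\backslash\SL_n(\A_F)$ to the cuspidal case.

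\emph{Fourier identity.} The determinant map yields the short exact sequence
\[1\longrightarrow \SL_n(F)\backslash\SL_n(\A_F)\longrightarrow \GL_n(F)\backslash\GL_n(\A_F)^1 \longrightarrow F^\times\backslash\A_F^1 \longrightarrow 1,\]
whose quotient $F^\times\backslash\A_F^1$ is a compact abelian group. Choosing compatible Haar measures and a measurable section $s$ of $\det$ with $s(1)=1$, Fubini gives, for each unitary character $\alpha$ of $F^\times\backslash\A_F^1$,
\[\int_{\GL_n(F)\backslash\GL_n(\A_F)^1}\varphi(h)\alpha(\det h)\,dh=\int_{F^\times\backslash\A_F^1}\alpha(t)\,I_\varphi(t)\,dt,\]
where $I_\varphi(t):=\int_{\SL_n(F)\backslash\SL_n(\A_F)}\varphi(s(t)g)\,dg$. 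This is well-defined as a function on the coset space $F^\times\backslash\A_F^1$ thanks to the $\GL_n(F)$-invariance of $\varphi$, and uses that $\det(s(t)g)=t$ when $g\in\SL_n(\A_F)$. Summing over all $\alpha$ and applying Fourier inversion at $t=1$ on the compact abelian group $F^\times\backslash\A_F^1$ gives $\sum_\alpha\int\alpha(t)I_\varphi(t)\,dt=I_\varphi(1)$, which equals the $\SL_n$-period of $\varphi$ by the choice $s(1)=1$.

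\emph{Main obstacle.} The delicate point is justifying the interchange of the sum over characters with the integration, i.e. the pointwise convergence of the Fourier series of $I_\varphi$ at the identity. This requires $I_\varphi$ to be regular enough on $F^\times\backslash\A_F^1$ — smooth in the archimedean directions and locally constant in the non-archimedean ones — which is inherited from the smoothness of the automorphic form $\varphi$ together with the strong continuity of translation on $L^1(\SL_n(F)\backslash\SL_n(\A_F))$. Once this regularity is established, the classical absolute convergence theorem for Fourier series of such functions on compact abelian groups completes the argument.
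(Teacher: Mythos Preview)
Your proposal is correct and follows essentially the same route as the paper. The paper's proof is very terse: for absolute convergence it simply points to \cite[Lemma 3.1]{yam15}, and for the identity it writes the $\GL_n(\A_F)^1$-integral as $\int_{F^\times\backslash\A_F^1}\bigl(\int_{\SL_n(F)\backslash\SL_n(\A_F)}\varphi(h\,\diag(x,I_{n-1}))\,dh\bigr)\,dx$ and invokes Fourier inversion, using the explicit section $x\mapsto\diag(x,I_{n-1})$ where you use a general measurable section with $s(1)=1$. Your discussion of the ``main obstacle'' (regularity of $I_\varphi$ needed for pointwise Fourier inversion) is more careful than the paper, which glosses over this and only afterwards, in a separate remark, observes that the sum is in fact finite.
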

\begin{proof}
For the absolute convergence of the integrals, the arguments of \cite[Lemma 3.1]{yam15} adapt in a straightforward manner and we do not repeat them. 
The proof of the second point is now essentially that of \cite[Proposition 3.2]{ap06}. Indeed 
\[\int_{\GL_n(F) \backslash \GL_n(\A_F)^1} \varphi(h)dh = \int_{F^\times \backslash \A_F^1} \left( \int_{\SL_n(F)\backslash \SL_n(\A_F)} \varphi(h \diag(x,I_{n-1}))dh\right) dx\] and one applies Fourier inversion on the compact abelian group $F^\times \backslash \A_F^1$.
\end{proof}

\begin{remark}\label{remark finiteness}
We may remark here that the sum of the $(\GL(n,\A_F)^1,\alpha)$-periods over all characters $\alpha$ of the group $F^\times\backslash \A_F^1$ is in fact a finite sum. To this end, we may assume $\widetilde{\pi}$ is distinguished with respect to $\GL_n(\A_F)$ (indeed if $\widetilde{\pi}$ is $(\GL_n(\A_F)^1,\alpha)$-distinguished, then it is 
$(\GL_n(\A_F),\alpha')$-distinguished for a character $\alpha'$ of $\A_F^\times$ extending $\alpha$ and 
equal to the central character $\omega_{\widetilde{\pi}}$ on $(\A_F)_{>0}$). Observe that this implies that $\widetilde{\pi}$ is Galois conjugate self-dual by strong multiplicity one for the residual spectrum \cite{mw89} and the fact that $\Sp(d,\sigma_v)$ is distinguished and hence Galois conjugate self-dual for any finite place $v$ (\cite{fli91}). Now, if the $(\GL(n,\A_F)^1,\alpha)$-period is also non-zero then we have $\widetilde{\pi} \cong \widetilde{\pi} \otimes \alpha' \circ N_{E/F}$ for some $\alpha'$ extending $\alpha$ to $\A_F^\times$. As $\widetilde{\pi} = \Sp(d,\sigma)$, we see that $\sigma \cong \sigma \otimes \alpha' \circ N_{E/F}$. As $\sigma$ is a cuspidal representation, the finiteness of the set of such characters $\alpha'$ (hence of that of the characters $\alpha$) follows from \cite[Lemma 3.6.2]{ram00} (which is \cite[Lemma 4.11]{hs12}).    
\end{remark}

\begin{definition}
We say that the $\La$-packet determined by a square-integrable representation of $\GL_n(\A_E)$ is distinguished if it contains a distinguished representation of $\SL_n(\A_E)$.
\end{definition}

\section{Distinction inside global L-packets}\label{section global distinction}

The aim of this section is to establish our main result, namely Theorem \ref{theorem main} which asserts that distinguished representations inside distinguished L-packets are those with a degenerate $\psi$-Whittaker model for some 
distinguished $\psi$, and to give a first application of of it (Theorem \ref{theorem SL analogue of yam thm 1.2}). The proof is an induction based on the unfolding method, and has two steps, the first one being the cuspidal step (corresponding to $d=1$).

\subsection{The cuspidal case}\label{subsection period vs Whittaker period}

Here we characterize members of distinguished $\La$-packets of $\SL_n(\A_E)$ with non-vanishing $\SL_n(A_F)$-period in terms of Whittaker periods. The following lemma is a generalization of 
\cite[Lemma 4.3]{ap06}, but the proof in \cite[Lemma 4.3]{ap06} does not generalize to this case. We denote by $Q_n$ the proper parabolic subgroup of $\SL_n$ containing $P_n^\circ=\SL_{n-1}.U_n$. For $n\geq 3$, we set \[R_n=\{\diag(x,I_{n-2},x^{-1}),x\in \mathbb{G}_m\},\] so $Q_n$ is the semi-direct product $P^1_n.R_n$.

\begin{lemma}\label{lemma step 1}
Take $n\geq 3$. Let $\varphi$ be a cusp form on $\SL_n(\A_E)$ such that 
\[\int_{\SL_n(F)\backslash \SL_n(\A_F)} \varphi(h) dh\neq 0,\] then there is $h_0\in \SL_n(\A_F)$ (and in fact in $R_n(\A_F)$) such that \[\int_{P_n^1(F)\backslash P_n^1(\A_F)} \varphi(hh_0) dh\neq 0,\] where 
this integral is absolutely convergent. 
\end{lemma}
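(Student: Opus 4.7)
The plan is to perform a two-step unfolding of the $\SL_n(\A_F)$-period using the Iwasawa decomposition of $\SL_n(\A_F)$ relative to the parabolic $Q_n$, combined with the Levi decomposition $Q_n = P_n^1 \rtimes R_n$. First, I would fix a good maximal compact subgroup $K \subset \SL_n(\A_F)$ so that $\SL_n(\A_F) = Q_n(\A_F) \cdot K$. Since $\SL_n(F) \cap Q_n(\A_F) = Q_n(F)$ by standard reduction theory for parabolic subgroups over global fields, the Iwasawa decomposition descends to the left quotient by $\SL_n(F)$, yielding
\[
\int_{\SL_n(F)\backslash \SL_n(\A_F)} \varphi(h) \, dh = \int_K \int_{Q_n(F)\backslash Q_n(\A_F)} \varphi(qk) \, \delta_{Q_n}(q)^{-1} \, dq \, dk.
\]

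Next, using $Q_n = P_n^1 \rtimes R_n$ (with $P_n^1$ normal in $Q_n$ and $P_n^1 \cap R_n = \{e\}$), the measure decomposition $Q_n(F)\backslash Q_n(\A_F) \simeq (P_n^1(F)\backslash P_n^1(\A_F)) \times (R_n(F)\backslash R_n(\A_F))$ applies. A direct calculation shows $\delta_{Q_n}|_{P_n^1} \equiv 1$ and $\delta_{Q_n}(\diag(r, I_{n-2}, r^{-1})) = |r|^n$, so
\[
\int_{\SL_n(F)\backslash \SL_n(\A_F)} \varphi(h) \, dh = \int_K \int_{R_n(F)\backslash R_n(\A_F)} |r|^{-n} \int_{P_n^1(F)\backslash P_n^1(\A_F)} \varphi(prk) \, dp \, dr \, dk.
\]
Absolute convergence of the innermost $P_n^1$-period for any fixed $r \in R_n(\A_F)$ and $k \in K$ follows from the rapid decay of the cusp form $\varphi$ on Siegel sets together with the finite invariant volume of $P_n^1(F)\backslash P_n^1(\A_F) \cong (\GL_{n-1}(F)\backslash \GL_{n-1}(\A_F)^1) \times (U_n(F)\backslash U_n(\A_F))$. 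The non-vanishing hypothesis then forces the existence of some $(r,k) \in R_n(\A_F) \times K$ for which the inner integral does not vanish, immediately giving the weaker conclusion $h_0 := rk \in \SL_n(\A_F)$.

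The main obstacle is the strengthening to $h_0 \in R_n(\A_F)$, that is, absorbing the compact factor $k$. Since the map $k \mapsto \int_{P_n^1(F)\backslash P_n^1(\A_F)} \varphi(prk) \, dp$ is smooth in $k$ (by the smoothness of $\varphi$), and since replacing $\varphi$ by any right $K$-translate preserves the $\SL_n(\A_F)$-period by right-invariance of Haar measure, one hopes to reduce to the case $k = e$ by selecting a suitable right $K$-translate of $\varphi$, equivalently by choosing a judicious $K$-finite vector in the cuspidal representation containing $\varphi$. The delicate point is to make this transfer rigorous so that the resulting non-vanishing at an element of $R_n(\A_F)$ applies to the original $\varphi$, not merely to a translate; this is the technical heart of the proof.
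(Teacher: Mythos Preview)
Your argument fails at the very first displayed identity. The Iwasawa decomposition $\SL_n(\A_F)=Q_n(\A_F)K$ yields an integration formula on the group $\SL_n(\A_F)$, but it does \emph{not} descend to the arithmetic quotient $\SL_n(F)\backslash \SL_n(\A_F)$ as you claim. The observation $\SL_n(F)\cap Q_n(\A_F)=Q_n(F)$ (which is immediate, not a matter of reduction theory) only says that $Q_n(F)\backslash Q_n(\A_F)$ maps into $\SL_n(F)\backslash \SL_n(\A_F)$; it gives no measure identification. Concretely, the map $(Q_n(F)\backslash Q_n(\A_F))\times K\to \SL_n(F)\backslash \SL_n(\A_F)$ is surjective with infinite fibres (they contain copies of $Q_n(F)\backslash \SL_n(F)$), so no identity of the form
\[
\int_{\SL_n(F)\backslash \SL_n(\A_F)}\varphi(h)\,dh=\int_K\int_{Q_n(F)\backslash Q_n(\A_F)}\varphi(qk)\,\delta_{Q_n}(q)^{-1}\,dq\,dk
\]
can hold. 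If such a naive unfolding of automorphic periods existed, the entire Eisenstein-series machinery relating $G$-periods to $P$-periods would be superfluous.

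The paper's proof supplies exactly the missing link. It invokes \cite[\S 18.2]{sv17} (equivalently the Eisenstein-series argument of \cite[Proposition~8]{dp19}): for $\Re(s)\gg 0$ the parabolic period $\int_{Q_n(F)\backslash Q_n(\A_F)}\varphi(q)\,\delta_{Q_n}^{s}(q)\,dq$ converges absolutely and admits meromorphic continuation, and there is a meromorphic $r(s)$ with $r(0)=0$ such that $r(s)$ times this parabolic period tends to the $\SL_n(\A_F)$-period as $s\to 0$. Hence nonvanishing of the $\SL_n$-period forces nonvanishing of the $Q_n$-period at some real $s$ in the half-plane of absolute convergence; one then splits that $Q_n$-integral over $P_n^1$ and $R_n$ and reads off $h_0\in R_n(\A_F)$ directly. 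Note that this route never introduces a compact factor $K$, so the second obstacle you identify (and do not resolve) simply does not arise.
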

\begin{proof}
According to \cite[Section 18.2]{sv17}, there is $s\in \mathbb{C}$ such that for $\Re(s)$ large enough, the integral $\int_{Q_n(F)\backslash Q_n(\A_F)} \varphi(p)\d_{Q_n}^s(p) dp$ is absolutely convergent. Moreover it has meromorphic continuation, 
and there is a meromorphic function $r(s)$ with $r(0)=0$ such that $r(s)\int_{Q_n(F)\backslash Q_n(\A_F)} \varphi(h)\d_{Q_n}^s(h) dh$ tends to $\int_{\SL_n(F)\backslash \SL_n(\A_F)} \varphi(h) dh\neq 0$ when $s\rightarrow 0$. In particular there is an $s\in \R$ large enough in the realm of absolute convergence such that
\[0\neq \int_{Q_n(F)\backslash Q_n(\A_F)} \varphi(p)\d_{Q_n}^s(p) dp = \int_{P_n^1(F)\backslash P_n^1(\A_F)}\int_{R_n(F)\backslash R_n(\A_F)} \varphi(pa)\d_{Q_n}^s(a) dpda\] hence there is an $a\in R_n(\A_F)$ such that  
$\d_{Q_n}^s(a) \int_{P_n^1(F)\backslash P_n^1(\A_F)}\varphi(pa)dp \neq 0$ and the result follows.
\end{proof}

\begin{remark}
A result similar to Lemma \ref{lemma step 1} is \cite[Proposition 8]{dp19} where it is proved via unfolding an Eisenstein series $E(h,s)$ on $\SL_n(\A_F)$ and using that 
\[{\rm Res}_{s=1} \left(\int_{\SL_n(F)\backslash \SL_n(\A_F)} \varphi(h)E(h,s)dh \right) = \mathcal{P}_{\SL_n(\A_F)}(\varphi),\] a trick that \cite{dp19} attributes to \cite{agr93}. A straightforward adaptation of the proof of \cite[Proposition 8]{dp19} can also be used to prove Lemma \ref{lemma step 1}. Though our proof here looks much shorter where we appeal to \cite[Section 18.2]{sv17}, however the core of \cite[Proposition 18.2.1]{sv17} is the equality (18.6) and what follows in {\it loc. cit.}, and it relies on the exact same considerations on Eisenstein series as in \cite[Proposition 8]{dp19}. Hence the proof above is in fact essentially the same as that of 
\cite[Proposition 8]{dp19} but the main part of the argument is contained in the statement of \cite[Section 18.2]{sv17}. Note that \cite[Section 18.2]{sv17} is done in general for any semisimple group.
\end{remark}

We recall that $U_{n,k}=U_n \dots U_{k+1}<N_n=U_{1,n}.$ For $\psi_{n,k}$ a character of 
$U_{k,n}(\A_E)$ and $\varphi$ a cusp form on 
$\SL_n(\A_E)$, we set 
\[\varphi_{\psi_{n,k}}(x)=\int_{N_{n,k}(E)\backslash N_{n,k}(\A_E)} \varphi(nx)\psi_{n,k}^{-1}(n)dn\] for $x\in \SL_n(\A_E)$. When $k=1$ and $\psi_{n,1}$ is non-degenerate, we write 
$\varphi_{\psi_{n,1}}=W_{\varphi,\psi_{n,1}}$. The reader familiar with it will recognize what is often called the unfolding method in the following proof (see \cite[Section 6]{js90} for a famous and difficult instance of this technique).

\begin{proposition}\label{proposition unfolding}
Let $\varphi$ be a cusp form on $\SL_n(\A_E)$ such that 
\[\int_{P_n^1(F)\backslash P_n^1(\A_F)} \varphi(h) dh\neq 0,\] then there is a non-degenerate character $\psi$ of $N_n(\A_E)/N_n(E+\A_F)$ such that 
$W_{\varphi,\psi}$ does not vanish on $\SL_n(\A_F)$. In particular thanks to Lemma \ref{lemma step 1} if $\pi$ is an $\SL_n(\A_F)$-distinguished cuspidal automorphic representation of $\SL_n(\A_E)$, 
then it is $\psi$-generic for a non-degenerate character $\psi$ of $N_n(\A_E)/N_n(E+\A_F)$.
\end{proposition}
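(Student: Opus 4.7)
The plan is to proceed by induction on $n$, the base case $n=2$ being straightforward: one directly expands the cusp form $\varphi$ along $U_2(E)\backslash U_2(\A_E)$ into a Whittaker series, and the $U_2(F)\backslash U_2(\A_F)$-integration forces the surviving characters to be trivial on $U_2(E+\A_F)$. For the inductive step with $n\geq 3$, the idea is to iteratively Fourier-expand $\varphi$ along the unipotent subgroups $U_n, U_{n-1}, \ldots, U_2$ and unfold, ultimately producing a single non-degenerate character trivial on $N_n(E+\A_F)$.

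Concretely, I would first use the decomposition $P_n^1 = G_{n-1}^1 \cdot U_n$ to write the period as an iterated integral, then Fourier-expand $\varphi$ along $U_n(E)\backslash U_n(\A_E)$. Since $U_n$ is the unipotent radical of the $(n-1,1)$-parabolic, cuspidality of $\varphi$ kills the constant term, so $\varphi(g)=\sum_{\theta\neq 0}\varphi^\theta(g)$. Substituting and integrating over $U_n(F)\backslash U_n(\A_F)$ forces the surviving characters $\theta$ to be trivial on $U_n(E+\A_F)$. Next, using the equivariance $\varphi^\theta(hg)=\varphi^{\theta^h}(g)$ for $h\in \SL_{n-1}(F)$, I would organize the remaining sum by $\SL_{n-1}(F)$-orbits and unfold each orbit contribution to an integral over $\mathrm{Stab}_{\SL_{n-1}(F)}(\theta_0)\backslash G_{n-1}^1(\A_F)$. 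Since $n-1\geq 2$, signed permutation matrices in $\SL_{n-1}(F)$ allow me to move any non-zero parameter to one with non-trivial last coordinate; choosing $\theta_0$ of this form, its stabilizer is precisely $P_{n-1}^\circ(F)$. An Iwasawa decomposition on $G_{n-1}^1(\A_F)$ then yields, for some $g_0\in G_{n-1}^1(\A_F)$, non-vanishing of the $P_{n-1}^1(\A_F)$-period of $g\mapsto\varphi^{\theta_0}(g g_0)$.

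The main obstacle is that $\varphi^{\theta_0}$ is no longer a cusp form on $\SL_{n-1}(\A_E)$: it is only left-invariant under $P_{n-1}^\circ(E)$, and in particular its constant term along $U_{n-1}$ need not vanish. I would resolve this by iterating the same unfold-and-orbit argument without relying on cuspidality at intermediate stages. At each stage $k$ (from $n$ down to $2$), I Fourier-expand the current intermediate function along $U_k(E)\backslash U_k(\A_E)$ (valid since $U_k(E)$ lies in the left-invariance group at that stage), integrate against $U_k(F)\backslash U_k(\A_F)$ to reduce to characters trivial on $U_k(E+\A_F)$, and apply the $\SL_{k-1}(F)$-orbit argument to isolate a representative $\theta_k$ with non-trivial $(k-1,k)$-coordinate. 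The constant-term contributions survive but are simply included in the Fourier sum; the orbit argument still isolates a non-degenerate character at each stage since non-degeneracy at the simple-root position is preserved by the $\SL_{k-1}(F)$-action.

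Assembling $\psi:=\theta_n\otimes\cdots\otimes\theta_2$ yields a non-degenerate character of $N_n(\A_E)/N_n(E+\A_F)$ for which $W_{\varphi,\psi}$ does not vanish on $\SL_n(\A_F)$. The ``in particular'' conclusion follows by applying this to a translate of $\varphi$ whose $P_n^1(\A_F)$-period is non-zero, which exists by Lemma \ref{lemma step 1}.
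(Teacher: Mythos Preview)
Your overall architecture coincides with the paper's: Fourier-expand along $U_n$, kill the trivial term by cuspidality, collapse the non-trivial characters into a single $\SL_{n-1}(F)$-orbit with stabiliser $P_{n-1}^\circ(F)$, unfold, and iterate. You are also right that $\varphi^{\theta_0}$ is only left $P_{n-1}^\circ(E)$-invariant and not $\SL_{n-1}(E)$-invariant; the paper glosses over this by simply calling $\varphi_0$ ``a cusp form on $\SL_{n-1}(\A_E)$'' and applying the induction hypothesis verbatim.

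Where your proposal goes wrong is the resolution of this point. You assert that at later stages the constant term along $U_k$ may survive and that the orbit argument nevertheless ``still isolates a non-degenerate character''. That reasoning does not hold: if the trivial-character contribution were genuinely non-zero and the non-trivial orbit contribution were zero, the unfolding would produce a \emph{degenerate} character at the $(k-1,k)$ position and the argument would stall. The $\SL_{k-1}(F)$-action certainly preserves non-degeneracy of the characters already chosen on $U_{k+1},\dots,U_n$, but it says nothing about $\theta_k$ itself.

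The actual fix is that the constant terms \emph{do} vanish at every stage, and this is what the paper is tacitly using. Concretely, after having taken Fourier coefficients along $U_n,\dots,U_{k+1}$ with the standard non-degenerate characters (non-trivial only on the simple-root entries $(n-1,n),\dots,(k,k+1)$), the resulting character extended trivially to $U_k$ is trivial on the entire unipotent radical of the $(k-1,n-k+1)$-parabolic of $\GL_n$. Integrating first over that unipotent radical and invoking cuspidality of the original $\varphi$ shows that the $U_k$-constant term of the intermediate function vanishes. With this observation the paper's induction (or your direct iteration) goes through cleanly; you should replace your ``constant terms survive'' paragraph by this cuspidality check.
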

\begin{proof}
We do an induction on $n$, the case $n=2$ being part of the proof of \cite[Theorem 4.2]{ap06}. Hence we suppose $n\geq 3$. 
By hypothesis we have \[\int_{\SL_{n-1}(F)\backslash \SL_{n-1}(\A_F)} \int_{U_n(F)\backslash U_n(\A_F)}\varphi(uh) du dh\neq 0.\] 
Set \[\varphi^{U_n,F}(x)=\int_{U_n(F)\backslash U_n(\A_F)}\varphi(ux)du \] for $x\in  \SL_{n-1}(\A_F)$. 
By Poisson formula for $(F\backslash \A_F)^{n-1}\subset (E\backslash \A_E)^{n-1}$, we have 
\[\varphi^{U_n,F}(x)=\sum_{\psi_{n,n-1} \in  \widehat{\frac{U_n(\A_E)}{U_n(E+\A_F)}}} \varphi_{\psi_{n,n-1}}(x),\] which is in turn equal to \[\sum_{\psi_{n,n-1} \in  \widehat{\frac{U_n(\A_E)}{U_n(E+\A_F)}}-\{1\}} \varphi_{\psi_{n,n-1}}(x)\] by 
cuspidality of $\varphi$. The convergence of the series is absolute (and can be shown to be uniform for $x$ in compact subsets of 
$\SL_{n-1}(\A_F)$ but we will not use it). For fixed non-degenerate $\psi_{n,n-1}^0$ of $U_n(\A_E)/U_n(E+\A_F)$, one has \[\varphi^{U_n,F}(x)=\sum_{\psi_{n,n-1} \in  \widehat{\frac{U_n(\A_E)}{U_n(E+\A_F)}}} \varphi_{\psi_{n,n-1}}(x)= 
\sum_{\gamma\in  P_{n-1}^\circ(F)\backslash \SL_{n-1}(F)} \varphi_{\psi_{n,n-1}^0}(\gamma x)\] 
because as $n\geq 3$, the group 
$\SL_{n-1}(F)$ acts transitively on the set of non-trivial characters of $U_n(\A_E)$ trivial on $U_n(E+\A_F)$, and the stabilizer of 
$\psi_{n,n-1}^0$ is $ P_{n-1}^\circ(F)$. Hence 
\[0\neq \int_{\SL_{n-1}(F)\backslash \SL_{n-1}(\A_F)} \int_{U_n(F)\backslash U_n(\A_F)}\varphi(uh) du dh=\int_{P_{n-1}^\circ(F)\backslash \SL_{n-1}(\A_F)} \varphi_{\psi_{n,n-1}^0}(h)dh\] where the right hand side is absolutely convergent (by Fubini). Now 
\[\int_{P_{n-1}^\circ(F)\backslash \SL_{n-1}(\A_F)} \varphi_{\psi_{n,n-1}^0}(h)dh= \int_{P_{n-1}^\circ(\A_F)\backslash \SL_{n-1}(\A_F)} \int_{P_{n-1}^\circ(F)\backslash P_{n-1}^\circ(\A_F)} \varphi_{\psi_{n,n-1}^0}(hx)dhdx,\] and 
this implies that \[\int_{P_{n-1}(F)\backslash P_{n-1}^\circ(\A_F)} \varphi_{\psi_{n,n-1}^0}(hh_0)dh\neq 0\] for some $h_0\in 
\SL_{n-1}(\A_F)$. The function $\varphi_0=(\rho(h_0)\varphi)_{\psi_{n,n-1}^0}=\rho(h_0)\varphi_{\psi_{n,n-1}^0}$ is a cusp form on $\SL_{n-1}(\A_E)$, and we can apply our induction hypothesis to it, to conclude that $W_{\varphi_0,\psi'}$ is non-zero on $\SL_{n-1}(\A_F)$ for some 
non-degenerate character $\psi'$ of $N_{n-1}(\A_E)$ trivial on $N_{n-1}(\A_F+E)$. Setting 
$\psi:=\psi'\otimes \psi_{n,n-1}^0:n'.u\mapsto \psi'(n')\psi_{n,n-1}^0(u)$, one checks that by definition: 
\[W_{\varphi_0,\psi'}(x)=W_{\rho(h_0)\varphi,\psi}(x)=W_{\varphi,\psi}(xh_0)\] for $x\in \SL_{n-1}(\A_E)$. The result follows.
\end{proof}

\begin{remark}\label{dp-tams}
As mentioned in \S \ref{intro} our strategy in proving Proposition \ref{proposition unfolding} is to have an inductive set up to reduce the proof to the case of $n=2$. In the finite field cuspidal case as well as in the $p$-adic field tempered case such an inductive machinery can be set up via Clifford theory \cite[Proposition 1]{dp19} and this is carried out in \cite[Proposition 4.2 \& Remark 4]{ap18}. A similar approach in the number field case can be carried out as well by making use of the global analogue of \cite[Proposition 1]{dp19} which is \cite[Proposition 6]{dp19}. This was brought to our attention by Dipendra Prasad. In fact \cite[Proposition 6]{dp19} is stated more generally and our inductive set up would follow by taking $H = SL_{n-1}(\A_F)$ and $A =  \frac{U_n(\A_E)}{U_n(E+\A_F)}$, in the notations of \cite[Proposition 6]{dp19}.
\end{remark}

\begin{remark}
Though not relevant to this paper, we remark here that the inductive strategy in the finite cuspidal and $p$-adic tempered cases mentioned in Remark \ref{dp-tams} do not seem to generalise to cover all the generic representations. However, the final result, that distinction is characterised by genericity for a non-degenerate character of $N(E)/N(F)$, is established via other methods. In the $p$-adic case, this is done in \cite{ap18}, and this we have further generalised in Theorem \ref{theorem0} of the present paper. In the finite field case, the general result is established in \cite{am18}.  
\end{remark}

\begin{remark} We cease the occasion to fill a small gap in the literature, which uses the ideas of this paper: namely the unfolding of the Asai $L$-function. The proofs given in \cite[p. 303]{fli88} and \cite[p. 558]{z14} are a bit quick. Here we add the details to the proof of [Flicker, 2 Proposition, p. 303]. The transition between the second and third line of the equality there relies on the following step: take $\varphi$ a cusp form on $\GL_n(\A_E)$, then 
\[\int_{N_n(F)\backslash N_n(\A_F)}\varphi(n) dn = \sum_{\gamma\in N_n(F)\backslash P_n(F)}W_{\varphi,\psi}(\gamma),\] where both the "integrals" are absolutely convergent and $\psi$ is a non-degenerate character of $N_n(\A_E)$ trivial on $N_n(\A_F+E)$. We use the same notations as in Proposition \ref{proposition unfolding}, and denote by $\psi_{n,n-1}^0$ the restriction of $\psi$ to $U_n(\A_E)$. 

Let us write 
\[\int_{N_n(F)\backslash N_n(\A_F)}\varphi(n) dn=\int_{N_{n-1}(F)\backslash N_{n-1}(\A_F)}\varphi^{U_n,F}(n) dn.\] 
By induction applied to the cusp form $\varphi^{U_n,F}$ on $\GL_{n-1}(\A_E)$, we have 
\[\int_{N_{n-1}(F)\backslash N_{n-1}(\A_F)}\varphi^{U_n,F}(n) dn = \sum_{\gamma'\in N_{n-1}(F)\backslash P_{n-1}(F)}\varphi^{U_n,F}(\gamma').\]
Now replace 
$\varphi^{U_n,F}(\gamma')$ by $\sum_{\gamma\in  P_{n-1}(F)U_n(F)\backslash P_n(F)} \varphi_{\psi_{n,n-1}^0}(\gamma \gamma')$ this time (still by Poisson formula and because $P_n(F)$ also acts transitively on the set of non-trivial characters of $U_n(\A_E)$ trivial on $U_n(E+\A_F)$, the stabilizer of $\psi_{n,n-1}^0$ being $ P_{n-1}(F)U_n(F)$). We get 
\begin{align*}
\int_{N_n(F)\backslash N_n(\A_F)}\varphi(n) dn &= \sum_{\gamma'\in  N_{n-1}(F)\backslash P_{n-1}(F)}  \sum_{\gamma \in  P_{n-1}(F)U_n(F)\backslash P_n(F)}W_{\varphi_{\psi_{n,n-1}^0},\psi_{|N_{n-1}(\A_E)}}(\gamma \gamma')dn \\
&= \sum_{\gamma'\in  N_{n-1}(F)\backslash P_{n-1}(F)}  \sum_{\gamma \in  P_{n-1}(F)U_n(F)\backslash P_n(F)} W_{\varphi,\psi}(\gamma\gamma') \\
&= \sum_{\gamma\in  N_{n-1}(F)U_n(F)\backslash P_n(F)}W_{\varphi,\psi}(\gamma),
\end{align*}
which is what we wanted.
\end{remark}

\subsection{The square-integrable case}\label{subsection square integrable distinction insideL packets}

Our aim in this section is to show that if $\pi$ is distinguished then $\pi$ has a non-vanishing Fourier coefficient with respect to a character of type $r^d$ of $N_n(\A_E)$ which is trivial on $N_n(E+\A_F)$ (see Proposition \ref{prop main}). The key ingredient in achieving this is Proposition \ref{proposition yamana-sln} below. 
 
The following result is \cite[Theorem 1.1]{yam15} slightly reformulated for our purposes.
 
\begin{theorem}\label{theorem twisted Yamana}
Let $n=rd$ with $r \geq 2$ and $d\geq 2$, and let $\psi$ be a non-degenerate unitary character of $N_n(\A_E)$ trivial on $N_n(E+\A_F)$. Fix a character $\alpha$ of $F^\times\backslash \A_F^1$. Then for $\varphi \in \widetilde{\pi} =  \Sp(d,\sigma)$, we have 
\[\int_{\GL_n(F)\backslash \GL_n(\A_F)^1} \varphi(h)\alpha(\det h)dh= \]
\[\int_{N_{n-1,r-1}^\circ(\A_F)\backslash \SL_{n-1}(\A_F)}\int_{\GL_{n-r}(F)\backslash\GL_{n-r}(\A_F)^1}
(\alpha \varphi)_{U_{n,r},\psi}(\diag(m,I_r)\diag(h,1)) dm dh.\] 
\end{theorem}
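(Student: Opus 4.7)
The plan is to deduce the theorem from Yamana's formula \cite[Theorem 1.1]{yam15} in its untwisted form ($\alpha \equiv 1$) by an automorphic twisting argument that absorbs the character $\alpha$ into the Speh representation.

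First I would extend $\alpha\in\widehat{F^\times\backslash\A_F^1}$ to a Hecke character $\omega$ of $E^\times\backslash\A_E^\times$. Using the paper's notation, extend $\alpha$ to $\alpha_0\in\widehat{F^\times\backslash\A_F^\times}$ by declaring it trivial on $(\A_F)_{>0}$, and then extend $\alpha_0$ to $\omega$ via Pontryagin duality along the closed embedding $F^\times\backslash\A_F^\times\hookrightarrow E^\times\backslash\A_E^\times$ (which is an injection because $E^\times\cap\A_F^\times = F^\times$). Set $\varphi'(g) := \omega(\det g)\varphi(g)$, which lies in the twisted Speh representation $\widetilde{\pi}\otimes(\omega\circ\det) = \Sp(d,\sigma\otimes\omega)$. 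Applying Yamana's formula in its untwisted form to $\varphi'$ then yields the identity of the theorem with $\varphi'$ in place of $\alpha\varphi$.

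The rest of the argument is to unwind the twist. On the LHS, since $\det h\in\A_F^1$ for $h\in\GL_n(\A_F)^1$, we have $\omega(\det h) = \alpha_0(\det h) = \alpha(\det h)$, so the untwisted period of $\varphi'$ equals the $\alpha$-twisted period of $\varphi$. On the RHS, because $\omega\circ\det$ is trivial on the unipotent group $U_{n,r}(\A_E)$, a direct computation gives $\varphi'_{U_{n,r},\psi}(g) = \omega(\det g)\varphi_{U_{n,r},\psi}(g)$, and evaluating at $g = \diag(m,I_r)\diag(h,1)$ with $m\in\SL_{n-1}(\A_F)$ yields $\det g = \det h\in\A_F^1$, hence $\omega(\det g) = \alpha(\det h)$, matching the integrand $(\alpha\varphi)_{U_{n,r},\psi}(g)$ in the natural reading.

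The main obstacle is the initial construction of $\omega$: one must produce a Hecke character of $\A_E^\times/E^\times$ whose restriction to $\A_F^1$ is the prescribed $\alpha$. This is standard but requires two separate Pontryagin-duality steps (extending from $\A_F^1$ to $\A_F^\times$ using the product decomposition $\A_F^\times=\A_F^1\times(\A_F)_{>0}$, and then from $\A_F^\times/F^\times$ to $\A_E^\times/E^\times$ using that the former is a closed subgroup of the latter). A secondary minor issue is that if Yamana's original statement is phrased with a canonical choice of $\psi$, one needs to reduce the general case (arbitrary non-degenerate $\psi$ trivial on $N_n(E+\A_F)$) to this one by conjugating with an element of $T_n(F)$, which normalises every subgroup appearing in the identity and therefore preserves both sides simultaneously.
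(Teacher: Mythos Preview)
Your approach is essentially the same as the paper's: extend $\alpha$ first to $\alpha_0$ on $\A_F^\times$ and then to an automorphic character of $\A_E^\times$ (the paper calls it $\beta$, you call it $\omega$), twist $\varphi$ accordingly, and apply \cite[Theorem~1.1]{yam15} to the twisted form.

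There is, however, a genuine gap. Yamana's theorem has two parts: the unfolding identity (the second part) is stated under the hypothesis that the central character of the twisted cuspidal datum restricts trivially to $\A_F^\times$, i.e.\ $\alpha_0^r\cdot{\omega_\sigma}_{|\A_F^\times}\equiv 1$; when this fails, the first part only asserts that the period on the \emph{left} vanishes. Your sentence ``Applying Yamana's formula in its untwisted form to $\varphi'$ then yields the identity'' therefore does not cover the case $\alpha_0^r\cdot{\omega_\sigma}_{|\A_F^\times}\not\equiv 1$, where you must still verify independently that the right-hand side vanishes. The paper handles this via Remark~\ref{remark constant term}: the function $m\mapsto (\alpha_0\varphi)_{U_{n,r},\psi}(\diag(m,I_r)\,\cdot\,)$ lies (up to a positive character) in $\Sp(d-1,\sigma)$, so for $d\geq 3$ the inner period vanishes by another application of the first part of Yamana's theorem, while for $d=2$ it vanishes by a direct central character computation. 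Without this case split the reduction to Yamana is incomplete.
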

\begin{proof}
We denote by $\omega_{\sigma}$ the central character of $\sigma$. We extend $\alpha$ as $\alpha_0$ to $\A_F^\times$.  We then extend $\alpha_0$ to an automorphic character of $\beta$ of $\A_E^\times$. Then we claim that the following equality holds 
\[\int_{\GL_n(F)\backslash \GL_n(\A_F)^1} \varphi(h)\alpha_0(\det h)dh= \]
\[\int_{N_{n-1,r-1}(\A_F)\backslash \GL_{n-1}(\A_F)}\int_{\GL_{n-r}(F)\backslash\GL_{n-r}(\A_F)^1}
(\alpha_0 \varphi)_{U_{n,r},\psi}(\diag(m,I_r)\diag(h,1)) dm dh.\] 
Indeed, if $\alpha_0^r\cdot {\omega_{\sigma}}_{|\A_F^\times}$ is trivial, then this follows from the second part of Theorem \cite[Theorem 1.1]{yam15} applied to $\beta\otimes \pi$. If $\alpha_0^r\cdot {\omega_{\sigma}}_{|\A_F^\times}\not \equiv 1$, then it follows from the first part of \cite[Theorem 1.1]{yam15} applied to $\beta\otimes \pi$, with the extra observation that the right hand side of the equality also vanishes, thanks to Remark \ref{remark constant term} and the first part of Theorem \cite[Theorem 1.1]{yam15} again if $d\geq 3$, and for central character reasons when $d=2$. We can now replace the quotient $N_{n-1,r-1}(\A_F)\backslash \GL_{n-1}(\A_F)$
by $N_{n-1,r-1}^\circ(\A_F)\backslash \SL_{n-1}(\A_F)$ and the statement follows.
\end{proof}
 
From Theorem \ref{theorem twisted Yamana}, we deduce its $\SL(n)$ version by making use of Proposition \ref{proposition fourier}.

\begin{proposition}\label{proposition yamana-sln}
With notations and assumptions ($r,d\geq 2$) as in Theorem \ref{theorem twisted Yamana}, for $\varphi \in \Res(\widetilde{\pi})$ we have 
\[p_n(\varphi)=\int_{N_{n-1,r-1}^\circ(\A_F)\backslash \SL_{n-1}(\A_F)}\int_{\SL_{n-r}(F)\backslash\SL_{n-r}(\A_F)}
 \varphi_{U_{n,r},\psi}(\diag(m,I_r)\diag(h,1)) dm dh.\]
\end{proposition}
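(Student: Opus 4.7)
The plan is to reduce Proposition \ref{proposition yamana-sln} to Theorem \ref{theorem twisted Yamana} via Proposition \ref{proposition fourier} and Fourier inversion on the compact abelian group $F^\times\backslash\A_F^1$.

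First, I would apply Proposition \ref{proposition fourier} to expand
\[p_n(\varphi)=\sum_{\alpha}\int_{\GL_n(F)\backslash\GL_n(\A_F)^1}\varphi(h)\,\alpha(\det h)\,dh,\]
over characters $\alpha$ of $F^\times\backslash\A_F^1$. By Remark \ref{remark finiteness}, only finitely many $\alpha$ give a non-zero contribution, so all formal interchanges of sum and integral that follow are unproblematic.

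Next, to each summand I would apply Theorem \ref{theorem twisted Yamana}. Since $\alpha\circ\det$ is $U_{n,r}(\A_E)$-invariant, one has $(\alpha\varphi)_{U_{n,r},\psi}(g)=\alpha(\det g)\,\varphi_{U_{n,r},\psi}(g)$, and moreover $\det(\diag(m,I_r)\diag(h,1))=\det h$ for $m\in\SL_{n-1}(\A_F)$. Summing the resulting identities over $\alpha$ and exchanging the finite sum with the outer $m$-integral by Fubini yields
\[p_n(\varphi)=\int_{N_{n-1,r-1}^\circ(\A_F)\backslash\SL_{n-1}(\A_F)}\Big(\sum_{\alpha}\int_{\GL_{n-r}(F)\backslash\GL_{n-r}(\A_F)^1}\alpha(\det h)\,F_m(h)\,dh\Big)\,dm,\]
where $F_m(h):=\varphi_{U_{n,r},\psi}(\diag(m,I_r)\diag(h,1))$.

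The final step is to collapse the bracketed inner sum to the $\SL_{n-r}(\A_F)$-period of $F_m$, by running the argument of Proposition \ref{proposition fourier} but in dimension $n-r$ and with $F_m$ in place of a cusp form: using the exact sequence
\[1\to\SL_{n-r}(F)\backslash\SL_{n-r}(\A_F)\to\GL_{n-r}(F)\backslash\GL_{n-r}(\A_F)^1\to F^\times\backslash\A_F^1\to 1,\]
fiber the $h$-integral over $F^\times\backslash\A_F^1$ via $\det$, and apply Fourier inversion on this compact abelian quotient; the sum over $\alpha$ collapses to point evaluation on the identity fiber, producing exactly the $\SL_{n-r}(\A_F)$-integral of $F_m$. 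Substituting back yields the proposition. The main obstacle I foresee is purely the bookkeeping at this last step: the Haar measures on $\GL_{n-r}(\A_F)^1$, $\SL_{n-r}(\A_F)$ and $F^\times\backslash\A_F^1$ must be chosen so that the Fourier inversion constant is $1$, matching the normalization implicit in Proposition \ref{proposition fourier}. Absolute convergence throughout is guaranteed by Remark \ref{remark finiteness} together with the convergence statements already built into Proposition \ref{proposition fourier} and Theorem \ref{theorem twisted Yamana}.
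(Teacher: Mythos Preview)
Your approach is correct and is exactly the paper's proof: expand $p_n$ via Proposition \ref{proposition fourier}, apply Theorem \ref{theorem twisted Yamana} term by term, then collapse the sum over $\alpha$ using Proposition \ref{proposition fourier} in dimension $n-r$. One cosmetic point: you have swapped the roles of $m$ and $h$ relative to the statement (in the paper $m\in\GL_{n-r}$ and $h\in\SL_{n-1}$, so $\det(\diag(m,I_r)\diag(h,1))=\det m$), which makes your displayed formula and the definition of $F_m(h)$ fail to type-check; once the labels are restored the argument goes through verbatim.
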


\begin{proof}
We relate the $\SL(n,\A_F)$-period $p_n$ to the $(\GL(n,\A_F)^1,\alpha)$-periods via Proposition \ref{proposition fourier}.  Applying Theorem \ref{theorem twisted Yamana} to each summand of the sum over characters $\alpha$ of $F^\times\backslash \A_F^\times$ just selected, we once again apply Proposition \ref{proposition fourier} to the right hand side sum to conclude the proof.
\end{proof}

Setting  
\[(\rho(g)\varphi)_{n-r,\psi}:=m\in \GL_{n-r}(\A_E) \mapsto \varphi_{U_{n,r},\psi}(\diag(m,I_r)g),\] Proposition \ref{proposition yamana-sln} implies the following observation which we state as a lemma.  

\begin{lemma}\label{lemma induction}
With notations and assumptions ($r,d\geq 2$) as in Theorem \ref{theorem twisted Yamana}, suppose that $\varphi \in \Res(\widetilde{\pi})$ is such that $p_n(\varphi)\neq 0$, then 
there is $h\in \SL_{n-1}(\A_F)$ such that \[p_{n-r}((\rho(\diag(h,1))\varphi)_{n-r,\psi})\neq 0.\]
\end{lemma}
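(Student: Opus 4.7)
The plan is to read off the conclusion directly from Proposition \ref{proposition yamana-sln}. The statement expresses $p_n(\varphi)$ as an absolutely convergent iterated integral
\[
p_n(\varphi)=\int_{N_{n-1,r-1}^\circ(\A_F)\backslash \SL_{n-1}(\A_F)}\left(\int_{\SL_{n-r}(F)\backslash\SL_{n-r}(\A_F)} \varphi_{U_{n,r},\psi}(\diag(m,I_r)\diag(h,1))\,dm\right) dh,
\]
and by the very definition of $(\rho(g)\varphi)_{n-r,\psi}$ recalled just before the lemma, the inner integral over $m$ is nothing but $p_{n-r}\bigl((\rho(\diag(h,1))\varphi)_{n-r,\psi}\bigr)$.

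Granted this reformulation, the proof is essentially a one-liner: if $p_n(\varphi)\neq 0$, then the outer integral cannot vanish identically in $h$, so there must exist some coset representative $h\in \SL_{n-1}(\A_F)$ for which $p_{n-r}\bigl((\rho(\diag(h,1))\varphi)_{n-r,\psi}\bigr)\neq 0$. The only thing to be mindful of is that Proposition \ref{proposition yamana-sln} integrates $h$ over the quotient $N_{n-1,r-1}^\circ(\A_F)\backslash \SL_{n-1}(\A_F)$ rather than all of $\SL_{n-1}(\A_F)$, but this is harmless since we only need the existence of some $h\in \SL_{n-1}(\A_F)$ and any representative of a non-vanishing coset will do.

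I expect no genuine obstacle here: all the heavy lifting is already done in Proposition \ref{proposition yamana-sln} (which itself rested on Theorem \ref{theorem twisted Yamana} and Proposition \ref{proposition fourier}); the lemma is really a repackaging of that identity in a form that makes the induction step of the upcoming argument transparent, by isolating the period $p_{n-r}$ on the translate of the Fourier coefficient $\varphi_{U_{n,r},\psi}$.
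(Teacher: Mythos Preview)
Your proposal is correct and is exactly the paper's approach: the paper does not even write out a proof, stating only that the lemma is an immediate observation following from Proposition \ref{proposition yamana-sln}, and your unpacking of the inner integral as $p_{n-r}\bigl((\rho(\diag(h,1))\varphi)_{n-r,\psi}\bigr)$ via the definition given just before the lemma is precisely the intended reading.
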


We now state the main theorem of this section.

\begin{theorem}\label{theorem main}
Let $\La(\widetilde{\pi})$ be a distinguished square integrable $\La$-packet of $\SL_n(\A_E)$ of type $r^d$. Then the period integral $p_n$ does not vanish on $\pi\in \La(\widetilde{\pi})$ if and only if there exists a degenerate character $\psi$ of type $r^d$ of $N_n(\A_E)$ trivial on $N_n(E+\A_F)$ such that $p_{\psi_{1,\dots,d}}$ does not vanish on $\pi$.
\end{theorem}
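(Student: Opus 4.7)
The plan is to argue by induction on $d$, with the base case $d=1$ being the cuspidal setting, and the inductive step exploiting the unfolding identity of Proposition \ref{proposition yamana-sln}. I would establish both implications inductively.

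For the forward direction in the base case, given $\pi$ cuspidal with $p_n(\varphi)\neq 0$ for some $\varphi\in\pi$, the case $n=2$ is \cite[Theorem 4.2]{ap06}. For $n\geq 3$, Lemma \ref{lemma step 1} provides $h_0\in R_n(\A_F)$ such that the $P_n^1(F)\backslash P_n^1(\A_F)$-period of $\rho(h_0)\varphi$ is non-zero. Proposition \ref{proposition unfolding} then yields a non-degenerate character $\psi$ of $N_n(\A_E)/N_n(E+\A_F)$ for which $W_{\rho(h_0)\varphi,\psi}$ is not identically zero on $\SL_n(\A_F)$. Since $W_{\rho(h_0)\varphi,\psi}(g_0)=W_{\varphi,\psi}(g_0 h_0)=p_\psi(\rho(g_0 h_0)\varphi)$ and $g_0 h_0\in\SL_n(\A_F)$, the period $p_\psi$ does not vanish on $\pi$, as desired.

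For the inductive step of the forward direction, with $d\geq 2$ and $r\geq 2$, I would fix $\varphi\in\pi$ with $p_n(\varphi)\neq 0$ and apply Lemma \ref{lemma induction} with some non-degenerate $\psi$ of $N_r(\A_E)$ trivial on $N_r(E+\A_F)$. This produces $h\in\SL_{n-1}(\A_F)$ such that $p_{n-r}$ does not vanish on $(\rho(\diag(h,1))\varphi)_{n-r,\psi}$. By Remark \ref{remark constant term}, the restriction of this Fourier coefficient to $\SL_{n-r}(\A_E)$ lies in $\Res(\Sp(d-1,\sigma))$; by Corollary \ref{corollary transitive action on L packets} this restriction decomposes multiplicity-freely over $\La(\Sp(d-1,\sigma))$, so the non-vanishing $p_{n-r}$-period picks out some $\pi''\in\La(\Sp(d-1,\sigma))$ on which it is non-zero. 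The inductive hypothesis then furnishes a degenerate character $\psi'$ of type $r^{d-1}$ on $N_{n-r}(\A_E)$ trivial on $N_{n-r}(E+\A_F)$ with $p_{\psi'}\not\equiv 0$ on $\pi''$. Assembling $\psi_{1,\dots,d}:=\psi'\otimes\psi$ gives a degenerate character of type $r^d$ on $N_n(\A_E)$ trivial on $N_n(E+\A_F)$, and chasing the compatibility between Fourier coefficients on the $(n-r,r)$-mirabolic and the Whittaker character shows $p_{\psi_{1,\dots,d}}\not\equiv 0$ on $\pi$.

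The backward direction is proved in parallel inductive fashion but run in reverse. Under the hypothesis that $\pi$ has a $\psi_{1,\dots,d}$-Whittaker model trivial on $N_n(E+\A_F)$, Proposition \ref{proposition yamana-sln} applied with $\psi=\psi_d$ expresses $p_n(\varphi)$ as a double integral whose inner piece is the $\SL_{n-r}(\A_F)$-period of $(\rho(\diag(h,1))\varphi)_{U_{n,r},\psi_d}$ restricted to $\SL_{n-r}(\A_E)$. This Fourier coefficient lies, up to twist by Remark \ref{remark constant term}, in the representation $\pi''(\psi_{1,\dots,d-1})\in\La(\Sp(d-1,\sigma))$ carrying a $\psi_{1,\dots,d-1}$-Whittaker model, that character being trivial on $N_{n-r}(E+\A_F)$. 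Since distinction of $\La(\Sp(d,\sigma))$ forces Galois conjugate self-duality of $\sigma$ up to twist by applying \cite[Theorem 2.13]{mat14} at every place, $\La(\Sp(d-1,\sigma))$ is also distinguished, and the inductive hypothesis yields non-vanishing of $p_{n-r}$ on $\pi''(\psi_{1,\dots,d-1})$. A suitable choice of $\varphi$ then makes the outer integral non-zero. The cuspidal base case of the backward direction is handled similarly, using Proposition \ref{proposition fourier} to decompose $p_n$ into $(\GL_n(\A_F)^1,\alpha)$-periods and invoking the Flicker-type unfolding of each summand together with the Whittaker model hypothesis to exhibit a non-vanishing contribution. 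The main obstacle is the combined bookkeeping of the inductive step: one must verify that the Fourier coefficient $\varphi_{U_{n,r},\psi_d}$ genuinely lands, after decomposition, in the intended component $\pi''(\psi_{1,\dots,d-1})$, and that the outer integration over $\SL_{n-1}(\A_F)$ in Proposition \ref{proposition yamana-sln} can be arranged to be non-zero by an appropriate test vector. Corollary \ref{corollary transitive action on L packets}, guaranteeing both the multiplicity-freeness of $\Res(\widetilde{\pi})$ and the uniqueness of the $\psi$-Whittaker model inside the L-packet, is the key that allows the packet-by-packet inductive argument to close.
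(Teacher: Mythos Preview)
Your forward direction is essentially the paper's argument (Proposition \ref{prop main}): induction on $d$, base case via Lemma \ref{lemma step 1} and Proposition \ref{proposition unfolding}, inductive step via Lemma \ref{lemma induction} and Remark \ref{remark constant term}. Your detour through decomposing $\Res(\Sp(d-1,\sigma))$ into irreducibles and picking out $\pi''$ is unnecessary---the paper applies the inductive statement directly to the function $(\rho(\diag(h,1))\varphi)_{n-r,\psi}$ and then carries out the explicit Fourier-coefficient computation---but your version is not wrong.

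Your backward direction, however, is both far more laborious than the paper's and has real gaps. You attempt to run the Yamana identity in reverse: knowing $p_{n-r}\not\equiv 0$ on $\pi''(\psi_{1,\dots,d-1})$, you want to conclude the double integral in Proposition \ref{proposition yamana-sln} is non-zero for some $\varphi\in\pi$. But this requires knowing that, as $\varphi$ ranges over $\pi$ and $h$ over $\SL_{n-1}(\A_F)$, the functions $(\rho(\diag(h,1))\varphi)_{U_{n,r},\psi_d}$ actually hit a vector in $\pi''$ with non-zero $\SL_{n-r}(\A_F)$-period, and then that the outer integral over $N_{n-1,r-1}^\circ(\A_F)\backslash\SL_{n-1}(\A_F)$ does not kill it. Neither step is justified by ``a suitable choice of $\varphi$''. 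Your cuspidal base case for this direction is even vaguer, and invoking \cite[Theorem 2.13]{mat14} place-by-place does not give global distinction of $\La(\Sp(d-1,\sigma))$.

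The paper's backward direction (Lemma \ref{lemmathm}) avoids all of this by a two-line conjugation argument that crucially uses the hypothesis that the $\La$-packet is distinguished. Some $\pi'\in\La(\widetilde{\pi})$ is distinguished; by the forward direction just proved, $\pi'$ is $\psi'$-generic for some $\psi'$ of type $r^d$ trivial on $N_n(E+\A_F)$. Any two such characters are $T_n(F)$-conjugate, so take $t\in T_n(F)$ with $\psi={\psi'}^t$. Then ${\pi'}^t\in\La(\widetilde{\pi})$ is $\psi$-generic, hence equals $\pi$ by local uniqueness of degenerate Whittaker models, and conjugation by $t\in\GL_n(F)$ preserves $\SL_n(\A_F)$-distinction. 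You should replace your backward argument with this.
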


The key direction of Theorem \ref{theorem main} is Proposition \ref{prop main} which follows from Lemma \ref{lemma induction} by an inductive argument (see also the proof of Proposition \ref{proposition unfolding}). 

\begin{proposition}\label{prop main}
Let $\pi$ be an irreducible automorphic representation of $\SL_n(\A_E)$ of type $r^d$ which is distinguished 
with respect to $\SL_n(\A_F)$; thus there exists $\varphi\in \pi$ such that $p_n(\varphi)\neq 0$. 
Then there exist $d$ non-degenerate characters $\psi_i$ of $N_r(A_E)$ trivial on $N_r(E+\A_F)$ and $\varphi^\prime \in \pi$ such that 
\[p_{\psi_{1,\dots,d}}(\varphi') = \int_{N_n(E)\backslash N_n(\A_E)} \varphi'(n) \psi_{1,\dots,d}^{-1}(n) dn \neq 0.\]
Moreover, $\varphi^\prime$ can be chosen to be a right $\SL_{n-1}(\A_F)$-translate of $\varphi$. 
\end{proposition}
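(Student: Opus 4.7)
The strategy is induction on $d$, with the base case $d=1$ (the cuspidal case) handled by combining Lemma~\ref{lemma step 1} with Proposition~\ref{proposition unfolding}: applying the former to $\varphi$ produces an element $h_0 \in R_n(\A_F)$ with $\rho(h_0)\varphi$ having non-vanishing $P_n^1$-period, and then Proposition~\ref{proposition unfolding} yields a non-degenerate character $\psi_1$ of $N_n(\A_E)/N_n(E+\A_F)$ and an element of $\SL_n(\A_F)$ (in fact of $\SL_{n-1}(\A_F)$, as can be extracted from the inductive proof of Proposition~\ref{proposition unfolding}) at which the Whittaker function of $\rho(h_0)\varphi$ does not vanish.

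For the inductive step, fix $d\ge 2$ and assume the proposition for $d-1$. Choose any non-degenerate character $\psi_d$ of $N_r(\A_E)$ trivial on $N_r(E+\A_F)$. Lemma~\ref{lemma induction} provides $h_1 \in \SL_{n-1}(\A_F)$ such that $\varphi_1 := \rho(\diag(h_1,1))\varphi$ satisfies $p_{n-r}((\varphi_1)_{U_{n,r},\psi_d}) \neq 0$. By Remark~\ref{remark constant term}, the restriction $\phi := (\varphi_1)_{U_{n,r},\psi_d}|_{\SL_{n-r}(\A_E)}$ lies in $\Res(\Sp(d-1,\sigma))$, which by Corollary~\ref{corollary transitive action on L packets} decomposes multiplicity-freely as $\bigoplus_{\pi_\alpha \in \La(\Sp(d-1,\sigma))}\pi_\alpha$. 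Writing $\phi = \sum_\alpha \phi_\alpha$, linearity forces some $\pi_{\alpha_0}$ to be distinguished with $p_{n-r}(\phi_{\alpha_0}) \neq 0$; this $\pi_{\alpha_0}$ is then an irreducible distinguished square-integrable representation of type $r^{d-1}$.

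Apply the inductive hypothesis to $(\pi_{\alpha_0}, \phi_{\alpha_0})$: there exist a degenerate character $\psi_{1,\dots,d-1}$ of type $r^{d-1}$ on $N_{n-r}(\A_E)$ trivial on $N_{n-r}(E+\A_F)$, and $h_2 \in \SL_{n-r-1}(\A_F)$, such that $p_{\psi_{1,\dots,d-1}}(\rho(\diag(h_2,1))\phi_{\alpha_0}) \neq 0$. The crucial step is to argue that the remaining components contribute nothing: by local multiplicity-one of degenerate Whittaker models at each place (Proposition~\ref{proposition multiplicity one inside non arch L-packets} and Corollary~\ref{corollary existence of deg Whitt model and multiplicity one inside arch L packets}), combined with Corollary~\ref{corollary transitive action on L packets}, exactly one member of $\La(\Sp(d-1,\sigma))$ admits a $\psi_{1,\dots,d-1}$-Whittaker model, and this must be $\pi_{\alpha_0}$. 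Hence $p_{\psi_{1,\dots,d-1}}$ vanishes identically on $\pi_\alpha$ for $\alpha \neq \alpha_0$, and therefore
\[p_{\psi_{1,\dots,d-1}}\bigl(\rho(\diag(h_2,1))\phi\bigr) = p_{\psi_{1,\dots,d-1}}\bigl(\rho(\diag(h_2,1))\phi_{\alpha_0}\bigr) \neq 0.\]

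To conclude, unfold the global Fourier integral along the semidirect decomposition $N_n = U_{n,r} \rtimes N_{n-r}^{\mathrm{ul}}$: the character $\psi_{1,\dots,d}$ restricts to $(\mathbf{1}\otimes \psi_d)$ on $U_{n,r}$ (it is trivial on the off-diagonal piece of $U_{r^d}$) and to $\psi_{1,\dots,d-1}$ on $N_{n-r}^{\mathrm{ul}}$. A direct computation then gives
\[p_{\psi_{1,\dots,d}}(\varphi') = p_{\psi_{1,\dots,d-1}}\bigl(\rho(\diag(h_2,1))\phi\bigr) \neq 0\]
for $\varphi' := \rho\bigl(\diag(h_2, I_{r+1})\diag(h_1,1)\bigr)\varphi$. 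Since both $\diag(h_1,1)$ and $\diag(h_2, I_{r+1})$ lie in the image of $\SL_{n-1}(\A_F) \hookrightarrow \SL_n(\A_F)$, the element $\varphi'$ is a right $\SL_{n-1}(\A_F)$-translate of $\varphi$, as required. The main obstacle lies in the multiplicity argument of the preceding paragraph: a priori, applying induction only to one irreducible component $\phi_{\alpha_0}$ might not propagate to control the Fourier coefficient of the full $\phi$, and it is precisely the local/global uniqueness of degenerate Whittaker models that kills the off-component terms.
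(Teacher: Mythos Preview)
Your proof is correct but takes a detour the paper avoids. The paper's argument applies the induction hypothesis \emph{directly} to the function $(\rho(h)\varphi)_{n-r,\psi}$, which by Remark~\ref{remark constant term} lies in $\Res(\Sp(d-1,\sigma))$; although the proposition is phrased for an irreducible $\pi$, the proof actually establishes (and the induction really uses) the slightly stronger claim that for any $\varphi\in\Res(\Sp(d,\sigma))$ with $p_n(\varphi)\neq 0$, some $\SL_{n-1}(\A_F)$-translate has a non-vanishing degenerate Fourier coefficient of the required type. Consequently the paper never decomposes $\phi$ into irreducible pieces and never needs the multiplicity-one argument to kill the off-components.

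Your route---decompose $\phi=\sum_\alpha\phi_\alpha$, apply induction to one irreducible piece $\phi_{\alpha_0}$, then use local and global uniqueness of degenerate Whittaker models (Corollary~\ref{corollary transitive action on L packets}) to argue the remaining $\phi_\alpha$ contribute zero to $p_{\psi_{1,\dots,d-1}}$---does work, and your justification of the killing step is sound. It simply buys extra bookkeeping with no real gain; the paper's direct approach is essentially a one-liner by comparison. One minor omission on your side: Lemma~\ref{lemma induction} (via Proposition~\ref{proposition yamana-sln} and Theorem~\ref{theorem twisted Yamana}) requires $r\geq 2$, so you should dispose of the case $r=1$ separately (there $\pi$ is the trivial character of $\SL_n(\A_E)$ and the claim is immediate), as the paper does.
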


\begin{proof}
The theorem is immediate from Lemma \ref{lemma induction} by an inductive argument, however we have to treat the case $r=1$ separately. If $r=1$ then $\pi$ is the trivial character of $\SL_n(\A_E)$ and the claim is obvious. So we suppose that $r\geq 2$. If $d=1$ the result is proved in Proposition \ref{proposition unfolding}, so we assume $d\geq 2$. Since $\varphi \in \pi$ is such that 
$p_n(\varphi) \neq 0$, by Lemma \ref{lemma induction}, we get $h \in \SL_{n-1}(\A_F)$ such that $p_{n-r}( (\rho(h)\varphi)_{n-r,\psi}) \neq 0$. 
Therefore, by induction thanks to Remark \ref{remark constant term}, we get $d-1$ non-degenerate characters $\psi_i,\ i=2,\dots,d$ 
of $N_r(A_E)$ trivial on $N_r(E+\A_F)$ such that
\[p_{\psi_{2,\dots,d}}[\rho(x)(\rho(h)\varphi)_{n-r,\psi}]= \int_{N_{n-r}(E)\backslash N_{n-r}(\A_E)}
(\rho(h)\varphi)_{n-r,\psi}(nx) \psi_{2,\dots,d}^{-1}(n) dn \neq 0,\]
for some $x=$ diag$(y,1)$ for $y \in \SL_{n-r-1}(\A_F)$.
But setting $\psi_1:=\psi$,
\begin{flushleft}
$\int_{N_{n-r}(E)\backslash N_{n-r}(\A_E)} (\rho(h)\varphi)_{n-r,\psi_1}(nx) \psi_{2,\dots,d}^{-1}(n) dn $
\end{flushleft}
\begin{align*}
&= \int_{N_{n-r}(E)\backslash N_{n-r}(\A_E)}   \varphi_{U_{n,r},\psi}(\diag(nx,I_r)h)\psi_{1,\dots,d-1}^{-1}(n) dn \\
&= \int_{N_{n-r}(E)\backslash N_{n-r}(\A_E)} \int_{U_{n,r}(E)\backslash U_{n,r}(\A_E)}\varphi(u ~\diag(nx,I_r)h) \psi_{1,\dots,d-1}^{-1}(n)  (\mathbf{1}\otimes \psi^{-1})(u) dn du \\
&= \int_{N_n(E)\backslash N_n(\A_E)} \varphi(n ~\diag(x,I_r)h) )\psi_{1,\dots,d}^{-1}(n)dn,
\end{align*}
and the result follows.
\end{proof}

To end the proof of Theorem \ref{theorem main}, it now suffices to prove the following implication, which is part of the proof of \cite[Theorem 4.2]{ap06}, and which we repeat.

\begin{lemma}\label{lemmathm}
Let $\La(\widetilde{\pi})$ be a distinguished $\La$-packet of $\SL_n(\A_E)$ of type $r^d$. If $\pi \in \La(\widetilde{\pi})$ is $\psi$-generic with respect to a degenerate character $\psi$ of type $r^d$ of $N_n(\A_E)$ trivial on $N_n(E+\A_F)$, then $p_n$ does not vanish on $\pi$.
\end{lemma}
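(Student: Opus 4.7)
The plan is to follow the argument of \cite[Theorem 4.2]{ap06} and transfer distinction from a known distinguished member of $\La(\widetilde{\pi})$ to $\pi$ via a torus conjugation rational over $F$. First, the hypothesis that $\La(\widetilde{\pi})$ is distinguished provides $\pi_0 \in \La(\widetilde{\pi})$ on which $p_n$ does not vanish. Proposition \ref{prop main} applied to $\pi_0$ then yields a degenerate character $\psi_0$ of type $r^d$ trivial on $N_n(E+\A_F)$ with $p_{\psi_0}$ non-vanishing on $\pi_0$, so $\pi_0 = \pi(\psi_0)$ in the notation of Corollary \ref{corollary transitive action on L packets}.

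The next step is to find $t \in T_n(F)$ such that $\psi_0 \circ \mathrm{Ad}(t) = \psi$. Fix a non-trivial character $\psi_F$ of $\A_F/F$, pick $\delta \in E^\times$ with $E = F \oplus F\delta$, and define $\psi_E$ on $\A_E = \A_F \oplus \A_F\delta$ by $\psi_E(a+b\delta) = \psi_F(b)$; this character is trivial on $E + \A_F$. A direct verification shows that a non-degenerate character of $N_r(\A_E)$ trivial on $N_r(E+\A_F)$ is then of the form $u \mapsto \psi_E(\sum_i \alpha_i u_{i,i+1})$ with all $\alpha_i \in F^\times$, and the conjugation action of $T_r(F)$ on such tuples $(\alpha_i)$ is transitive on $(F^\times)^{r-1}$. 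Assembling this across the $d$ blocks of the type $r^d$ produces the desired $t$.

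With $t$ in hand, set $\pi_0^t := \{\phi^t : \phi \in \pi_0\}$ where $\phi^t(g) := \phi(tgt^{-1})$; since $t \in \GL_n(F) \subset \GL_n(E)$ normalizes $\SL_n$, this is a well-defined $\SL_n(\A_E)$-submodule of $L^2(\SL_n(E)\backslash \SL_n(\A_E))$ lying in $\La(\widetilde{\pi})$. The change of variable $n \mapsto tnt^{-1}$ on $N_n(E)\backslash N_n(\A_E)$ preserves the Haar measure (as $|t_i/t_j|_{\A_E}=1$ for $t_i \in F^\times$) and gives $p_\psi(\phi_0^t) = p_{\psi_0}(\phi_0)$, so $\pi_0^t$ is $\psi$-generic, hence equal to $\pi$ by the multiplicity-one statement in Corollary \ref{corollary transitive action on L packets}. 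Symmetrically, the change of variable $h \mapsto tht^{-1}$ on $\SL_n(F)\backslash \SL_n(\A_F)$ preserves the Haar measure (the modulus of $\mathrm{Ad}(t)$ on the semisimple group $\SL_n$ is trivial) and yields $p_n(\phi_0^t) = p_n(\phi_0) \neq 0$. The main obstacle is the second step: Corollary \ref{corollary transitive action on L packets} only guarantees transitivity of $\diag(E^\times, I_{n-1})$ on the L-packet, whereas to preserve the $\SL_n(\A_F)$-period one must descend to $F$-rational conjugation, and this is precisely what the explicit parameterization of characters trivial on $N_n(E+\A_F)$ enables.
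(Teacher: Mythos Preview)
Your proof is correct and follows essentially the same approach as the paper's own argument: pick a distinguished member $\pi'$ of the packet, apply Proposition~\ref{prop main} to make it $\psi'$-generic for some $\psi'$ trivial on $N_n(E+\A_F)$, conjugate by an element of $T_n(F)$ to pass from $\psi'$ to $\psi$, and use uniqueness of the $\psi$-generic member together with $\GL_n(F)$-invariance of $p_n$. Your write-up is in fact more explicit than the paper's at the key point---the paper simply asserts the existence of $t\in T_n(F)$ with $\psi={\psi'}^t$, whereas you justify it via the parametrization of characters of $\A_E/(E+\A_F)$ by $F^\times$.
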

\begin{proof}
By definition there is $\pi' \in \La(\widetilde{\pi})$ such that $p_n$ does not vanish on it. By Proposition \ref{prop main}, the representation $\pi'$ is $\psi'$-generic for a degenerate character $\psi'$ of type $r^d$ of $N_n(\A_E)$ trivial on $N_n(E+\A_F)$. Now there is $t\in T_n(F)$ such that 
$\psi={\psi'}^t$ where ${\psi'}^t(n) = \psi'(t^{-1}n t)$. And then the representation ${\pi'}^t$ given by ${\pi'}^t(g) =\pi'(t^{-1}gt)$ appears in $\La(\widetilde{\pi})$ and 
is $\psi$-generic. We deduce that $\pi={\pi'}^t$, by the local uniqueness of 
degenerate Whittaker models, and the result follows since $t\in \GL_n(F)$.
\end{proof}

Let us now state a simple but very useful consequence of Theorem \ref{theorem main} whose proof idea we have already employed in the proof of Lemma \ref{lemmathm}. We formulate this with an application in Section \ref{section-higher} in mind.

\begin{corollary}\label{corollary crux}
Let $\pi$ be a square integrable automorphic $\SL_n(\A_F)$-distinguished representation of $\SL_n(\A_E)$ and let $\La(\widetilde{\pi}')$ be a distinguished $\La$-packet of $\SL_n(\A_E)$ containing an isomorphic copy of $\pi$. Then the period $p_n$ does not vanish on the unique representation in $\La(\widetilde{\pi}')$ isomorphic to $\pi$. 
\end{corollary}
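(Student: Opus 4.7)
The plan is to reduce to the Whittaker characterization established in Theorem \ref{theorem main}, and then exploit the multiplicity-one statements of Corollary \ref{corollary transitive action on L packets} to transfer a degenerate Whittaker functional across $\La$-packets. Write $\pi^*$ for the unique member of $\La(\widetilde{\pi}')$ isomorphic to $\pi$; uniqueness here is precisely the multiplicity-free assertion of Corollary \ref{corollary transitive action on L packets}.

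First, since $\pi$ is $\SL_n(\A_F)$-distinguished, Proposition \ref{prop main} supplies a degenerate character $\psi$ of $N_n(\A_E)$ of type $r^d$, trivial on $N_n(E+\A_F)$, such that the Fourier coefficient $p_\psi$ does not vanish on $\pi$. In particular, $\pi$ admits a $\psi$-Whittaker functional as an abstract representation. Second, Corollary \ref{corollary transitive action on L packets} provides a unique member $\pi'(\psi) \in \La(\widetilde{\pi}')$ with a $\psi$-Whittaker model. By local uniqueness of degenerate Whittaker models (Proposition \ref{proposition multiplicity one inside non arch L-packets} and Corollary \ref{corollary existence of deg Whitt model and multiplicity one inside arch L packets}), whether an irreducible representation admits a $\psi$-Whittaker functional depends only on its isomorphism class; hence $\pi^*$, being isomorphic to $\pi$, also admits such a functional abstractly, and together with the multiplicity one statement inside $\La(\widetilde{\pi}')$ this forces $\pi^*=\pi'(\psi)$. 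Consequently $p_\psi$ is non-vanishing on the automorphic realization $\pi^*$.

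Third, since $\La(\widetilde{\pi}')$ is by hypothesis a distinguished $\La$-packet and $\pi^*$ has a degenerate $\psi$-Whittaker model for a character $\psi$ of type $r^d$ trivial on $N_n(E+\A_F)$, Lemma \ref{lemmathm}, which is the easy direction of Theorem \ref{theorem main}, yields that $p_n$ does not vanish on $\pi^*$. This proves the corollary.

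The only delicate point in the argument is the identification between the abstract $\psi$-Whittaker functional on $\pi \cong \pi^*$ and the concrete Fourier coefficient $p_\psi$ on the automorphic realization $\pi^* \subset \Res(\widetilde{\pi}')$; this identification is handled precisely by Corollary \ref{corollary transitive action on L packets}, so no real obstacle arises.
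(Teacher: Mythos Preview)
Your proof is correct and follows essentially the same route as the paper's: obtain a distinguished degenerate character $\psi$ for $\pi$ via Theorem \ref{theorem main} (Proposition \ref{prop main}), transfer the $\psi$-Whittaker model to the isomorphic copy $\pi^*$ by local uniqueness, identify $\pi^*$ with the unique $\psi$-generic member of $\La(\widetilde{\pi}')$, and conclude distinction by the easy direction (Lemma \ref{lemmathm}). The only cosmetic difference is that you invoke Corollary \ref{corollary transitive action on L packets} explicitly for the multiplicity-free step, whereas the paper phrases it directly in terms of local multiplicity one of degenerate Whittaker models.
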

\begin{proof}
Call $\pi'$ the isomorphic copy of $\pi$ in $\La(\widetilde{\pi}')$. Thanks to Theorem \ref{theorem main}, $\pi$ is $\psi$-generic for $\psi$ a distinguished degenerate character of $N_n(\A_E)$ trivial on $N_n(E+\A_F)$ of the correct type and therefore $\pi'$ has a locally $\psi_v$-degenerate Whittaker model for every place $v$ of $F$. By Theorem \ref{theorem main} again, 
the $\psi$-generic representation $\pi''$ in $\La(\widetilde{\pi}')$ is also 
$\SL_n(\A_F)$-distinguished. But thanks to multiplicity one of local degenerate Whittaker models, two locally $\psi$-generic automorphic representations in the same $\La$-packet are equal, hence 
$\pi'=\pi''$, and we deduce that $p_n$ does not vanish on $\pi'$. 
\end{proof}

As a corollary to Theorem \ref{theorem main}, we state and prove one more variation of the above theme. This is applied in Section \ref{section local global}.

\begin{proposition}\label{corollary transitive on dist}
Let $\pi$ be an irreducible square-integrable automorphic representation of $\SL_n(\A_E)$. The group $\diag(F^\times,I_{n-1})$ acts transitively on the set of distinguished members of $\La(\pi)$.
\end{proposition}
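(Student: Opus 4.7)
If the set of distinguished members of $\La(\pi)$ is empty there is nothing to prove, so assume it is non-empty; then $\La(\pi)=\La(\widetilde{\pi})$ is distinguished, where $\widetilde{\pi}=\Sp(d,\sigma)$ is of type $r^d$. The case $r=1$ is trivial since then $\widetilde{\pi}_{|\SL_n(\A_E)}$ is the trivial representation and $\La(\pi)$ is a singleton, so assume $r\geq 2$. By Theorem \ref{theorem main}, the distinguished members of $\La(\pi)$ are precisely the $\pi(\psi)$ as $\psi$ ranges over the set $\Psi$ of characters of $N_n(\A_E)$ of type $r^d$ trivial on $N_n(E+\A_F)$. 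Given $\pi_i=\pi(\psi_i)$ distinguished for $i=1,2$, my plan is to produce $t\in T_n(F)$ with $\psi_1^t=\psi_2$. Since $\pi(\psi)^t=\pi(\psi^t)$ for $t\in T_n(E)$ (conjugation transports Whittaker functionals), this will give $\pi_1^t=\pi_2$; and by the last paragraph of the proof of Corollary \ref{corollary transitive action on L packets}, $\pi_1^t=\pi_1^{t'}$ where $t':=\diag(\det t,I_{n-1})\in \diag(F^\times,I_{n-1})$, completing the argument.

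To produce $t$, observe that a character of $N_n(\A_E)$ of type $r^d$ is determined by its restrictions $\chi_j$ to the simple root subgroups $U_{j,j+1}(\A_E)\simeq \A_E$ for $j\not\equiv 0 \pmod{r}$, and $\psi\in \Psi$ if and only if each such $\chi_j$ is a non-trivial character of $\A_E/(E+\A_F)$. The key input is that this quotient is canonically isomorphic to $\A_F/F$: writing $E=F\oplus F\alpha$ for any $\alpha\in E\setminus F$ yields $\A_E=\A_F\oplus \A_F\alpha$ and $E+\A_F=\A_F\oplus F\alpha$, so $\A_E/(E+\A_F)\simeq \A_F\alpha/F\alpha\simeq \A_F/F$, the last isomorphism being $F$-linear. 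By self-duality of the adeles, $\widehat{\A_F/F}$ is a one-dimensional $F$-vector space. Consequently, fixing a non-trivial base character $\chi_0$ of $\A_E/(E+\A_F)$, every non-trivial such character takes the form $x\mapsto \chi_0(fx)$ for a unique $f\in F^\times$.

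Writing $\chi_j^{(i)}=\chi_0(f_j^{(i)}\cdot)$ with $f_j^{(i)}\in F^\times$ for the simple root data of $\psi_i$, the equation $\psi_1^t=\psi_2$ for $t=\diag(t_1,\dots,t_n)\in T_n(E)$ amounts to $f_j^{(1)}t_jt_{j+1}^{-1}=f_j^{(2)}$, i.e. $t_j/t_{j+1}=f_j^{(2)}/f_j^{(1)}\in F^\times$, for each $j\not\equiv 0 \pmod{r}$ (with no constraint when $j\equiv 0 \pmod r$, since then both sides of $\psi_1^t=\psi_2$ are trivial on $U_{j,j+1}$). Picking the block-initial entries $t_{kr+1}\in F^\times$ arbitrarily for $k=0,1,\dots,d-1$ and solving the recursion within each block of size $r$ yields the required $t\in T_n(F)$. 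The main obstacle is the identification of $\widehat{\A_E/(E+\A_F)}$ as a one-dimensional $F$-vector space, which is exactly what forces the scalars $f_j^{(i)}$ into $F^\times$; without this input one could only conclude $t\in T_n(E)$, recovering just the $\diag(E^\times,I_{n-1})$-transitivity already provided by Corollary \ref{corollary transitive action on L packets}.
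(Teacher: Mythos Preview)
Your proof is correct and follows essentially the same approach as the paper's: use Theorem \ref{theorem main} to identify the distinguished members of $\La(\pi)$ with the $\pi(\psi)$ for $\psi$ a distinguished character of type $r^d$, observe that $T_n(F)$ acts transitively on such $\psi$ (hence on the distinguished members, via local uniqueness of degenerate Whittaker models), and then pass from $T_n(F)$ to $\diag(F^\times,I_{n-1})$ by the multiplicity-one argument of Corollary \ref{corollary transitive action on L packets}. The paper compresses the transitivity of $T_n(F)$ on distinguished characters into the phrase ``we easily deduce'', whereas you spell it out explicitly via the identification $\A_E/(E+\A_F)\simeq \A_F/F$ and self-duality of the adeles; this is exactly the content the paper is leaving implicit.
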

\begin{proof}
From Theorem \ref{theorem main} and the local uniqueness of 
degenerate Whittaker models, we easily deduce that $T_n(F)$ acts transitively on the set of distinguished members of $\La(\pi)$, and that the representations in 
$\La(\pi)$ appear with multiplicity one. However for $t\in T_n(F)$ and $t^\prime=\diag(\det(t),I_{n-1})$, the representations 
$\pi^t$ and $\pi^{t^\prime}$ in $\La(\pi)$ are isomorphic, hence equal by multiplicity one inside $\La(\pi)$.
\end{proof}

\subsection{Automorphy and distinction of the highest derivative for $\SL_n(\A_E)$}\label{section distinction of the highest derivative}

As a first application of Theorem \ref{theorem main}, we end this section with an analogue of \cite[Theorem 1.2]{yam15} 
in the context of $\SL_n(\A_E)$. 

\begin{lemma}\label{proposition automorphy of the global highest derivative}
Let $\pi$ be an irreducible square-integrable representation of $\SL_n(\A_E)$ of type $r^d$, and write \[\pi=\otimes'_v \pi_v.\] Then for any 
$k\in [1,d]$, the representation \[\pi^{[r^{d-k}]}(\psi_{d-k+1,\dots,d}):=\otimes'_v \pi_v^{[r^{d-k}]}(\psi_{{d-k+1,\dots,d},v})\] (see Definitions \ref{definition irrep with fixed wh model in the derivative} and \ref{definition irrep with fixed wh model in the archimedean derivative}) is automorphic. If $\sigma$ is a cuspidal automorphic representation of $\GL_r(\A_E)$ such that $\La(\pi)=\La(\Sp(d,\sigma))$, then $\pi^{[r^{d-k}]}(\psi_{d-k+1,\dots,d})$ is in fact the unique 
element of $\La(\Sp(k,\sigma))$ with a $\psi_{d-k+1,\dots,d}$-Whittaker model.
\end{lemma}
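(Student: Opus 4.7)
The plan is to identify $\pi^{[r^{d-k}]}(\psi_{d-k+1,\dots,d})$ with the unique member of $\La(\Sp(k,\sigma))$ admitting a $\psi_{d-k+1,\dots,d}$-Whittaker model; this will yield automorphy and the stated characterization simultaneously.

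First I would invoke Jacquet's theorem, cited earlier in the excerpt, to note that $\Sp(k,\sigma)$ is an irreducible square-integrable automorphic representation of $\GL_{rk}(\A_E)$. Applying Corollary \ref{corollary transitive action on L packets} with the automorphic character $\psi_{d-k+1,\dots,d}$ of $N_{rk}(\A_E)$, which is of type $r^k$, singles out a unique element $\pi' \in \La(\Sp(k,\sigma))$ carrying a non-vanishing $\psi_{d-k+1,\dots,d}$-Whittaker period. Writing $\pi' = \otimes'_v \pi'_v$ via Flath's theorem, each $\pi'_v$ lies in $\La(\Sp(k,\sigma_v))$. By local multiplicity one for degenerate Whittaker models of this type (Proposition \ref{proposition multiplicity one inside non arch L-packets} and Corollary \ref{corollary existence of deg Whitt model and multiplicity one inside arch L packets}), the space of global $\psi_{d-k+1,\dots,d}$-Whittaker functionals on $\pi'$ factors as the tensor product of the corresponding local spaces, so the non-vanishing of the global functional forces the existence of a non-zero local $\psi_{d-k+1,\dots,d,v}$-Whittaker functional on each $\pi'_v$.

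Next I would match this local packet with the derived packet of $\pi_v$. Choosing any irreducible $\tilde{\pi}_v$ of $\GL_n(E_v)$ containing $\pi_v$ in its restriction, $\tilde{\pi}_v$ is a twist of $\Sp(d,\sigma_v)$ by a unitary character of $E_v^\times$. Iterating $d-k$ times the computation of the highest shifted derivative of a Speh representation, recalled in the Example of Section \ref{type} for non-archimedean places and in Equation (\ref{equation adduced}) for archimedean places, exhibits $\tilde{\pi}_v^{[r^{d-k}]}$ as a twist of $\Sp(k,\sigma_v)$. Since L-packets are invariant under character twists, $\La(\tilde{\pi}_v^{[r^{d-k}]}) = \La(\Sp(k,\sigma_v))$. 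Therefore $\pi'_v$ is the unique element of this local packet with a $\psi_{d-k+1,\dots,d,v}$-degenerate Whittaker model, which by Definitions \ref{definition irrep with fixed wh model in the derivative} and \ref{definition irrep with fixed wh model in the archimedean derivative} is exactly $\pi_v^{[r^{d-k}]}(\psi_{d-k+1,\dots,d,v})$. Assembling these local identifications gives $\pi' = \otimes'_v \pi_v^{[r^{d-k}]}(\psi_{d-k+1,\dots,d,v}) = \pi^{[r^{d-k}]}(\psi_{d-k+1,\dots,d})$, which simultaneously proves automorphy and the asserted identification.

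The main obstacle is the factorization used in the first step: deducing non-vanishing of each local Whittaker functional from the non-vanishing of the global one. This rests crucially on local uniqueness of degenerate Whittaker models of type $r^k$ for irreducible unitary representations, supplied by the Aizenbud--Gourevitch--Sahi results archimedeanly and by Bernstein--Zelevinsky theory non-archimedeanly. Without this uniqueness, the global functional could fail to split as a pure tensor of local ones and local non-vanishing would not follow; with it, the argument reduces to the nearly tautological statement that a non-zero pure tensor has non-zero factors.
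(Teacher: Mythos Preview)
Your proposal is correct and follows essentially the same approach as the paper's proof: pick the unique $\psi_{d-k+1,\dots,d}$-generic member $\mu$ of $\La(\Sp(k,\sigma))$, observe that each local component $\mu_v$ must be the $\psi_{d-k+1,\dots,d,v}$-generic member of $\La(\Sp(k,\sigma_v))$, and identify this with $\pi_v^{[r^{d-k}]}(\psi_{d-k+1,\dots,d,v})$. The paper's argument is much terser and leaves implicit the points you spell out (existence of $\mu$ via Corollary~\ref{corollary transitive action on L packets}, the global-to-local passage via factorization of the Whittaker functional, and the identification $\La(\widetilde{\pi}_v^{[r^{d-k}]})=\La(\Sp(k,\sigma_v))$), but the skeleton is the same.
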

\begin{proof}
let $\mu$ be the member of $\La(\Sp(k,\sigma))$ with a $\psi_{d-k+1,\dots,d}$-Whittaker model. Then for all places $v$ 
the representation $\mu_v$ is the member of $\La(\Sp(k,\sigma_v))$ with a $\psi_v$-Whittaker model, hence it must be $\pi_v^{[r^{d-d}]}(\psi_{{d-k+1,\dots,d},v})$ and the result follows.
\end{proof}

Here is our $\SL$-analogue of \cite[Theorem 1.2]{yam15}.

\begin{theorem}\label{theorem SL analogue of yam thm 1.2}
Suppose that $\psi_{1,\dots,d}$ is a character of $N_n(\A_E)$ of type $r^d$ trivial on $N_n(E+\A_F)$. Let $\pi$ be an irreducible square-integrable representation of $\SL_n(\A_E)$ of type $r^d$ and fix $k\in [1,d]$, then $\pi(\psi_{1,\dots,d})$ is $\SL_n(\A_F)$-distinguished if and only if $\pi^{[r^{d-k}]}(\psi_{d-k+1,\dots,d})$ is 
$\SL_{kr}(\A_F)$-distinguished.
\end{theorem}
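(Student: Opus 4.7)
The plan is to mirror the local argument of Theorem~\ref{theorem dist rep with given wh inside generic of Speh}, replacing Theorem~\ref{theorem main p-adic} by its global counterpart Theorem~\ref{theorem main} and \cite[Theorem~2.13]{mat14} by Yamana's \cite[Theorem~1.2]{yam15}. Fix a cuspidal automorphic representation $\sigma$ of $\GL_r(\A_E)$ with $\La(\pi)=\La(\Sp(d,\sigma))$. By Theorem~\ref{theorem main}, both directions reduce essentially to the statement that $\La(\Sp(d,\sigma))$ is a distinguished $\La$-packet if and only if $\La(\Sp(k,\sigma))$ is, the bridge between the two assertions being the compatibility of degenerate Whittaker models with the highest derivative from Lemma~\ref{proposition automorphy of the global highest derivative}.

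For the forward direction, assume $\pi(\psi_{1,\dots,d})$ is $\SL_n(\A_F)$-distinguished, so by definition $\La(\Sp(d,\sigma))$ is a distinguished packet. Using Proposition~\ref{proposition fourier} to expand the $\SL_n(\A_F)$-period as a sum of $(\GL_n(\A_F)^1,\alpha)$-periods, and extending the non-vanishing $\alpha$ to an automorphic character of $\A_F^\times$ and then to an automorphic character $\beta$ of $\A_E^\times$ as in Remark~\ref{remark finiteness}, one obtains that $\Sp(d,\sigma\otimes\beta)=\Sp(d,\sigma)\otimes\beta$ is $\GL_n(\A_F)$-distinguished. Yamana's theorem then propagates this distinction to $\sigma\otimes\beta$, and from there to $\Sp(k,\sigma\otimes\beta)=\Sp(k,\sigma)\otimes\beta$, so $\La(\Sp(k,\sigma))$ is itself a distinguished packet. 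By Lemma~\ref{proposition automorphy of the global highest derivative}, $\pi^{[r^{d-k}]}(\psi_{d-k+1,\dots,d})$ is the unique member of $\La(\Sp(k,\sigma))$ with a $\psi_{d-k+1,\dots,d}$-degenerate Whittaker model, and the restriction $\psi_{d-k+1,\dots,d}$ of $\psi_{1,\dots,d}$ is trivial on $N_{kr}(E+\A_F)$; Theorem~\ref{theorem main} then delivers the required $\SL_{kr}(\A_F)$-distinction. The converse runs in exactly the same fashion, starting from the $\SL_{kr}(\A_F)$-distinction of $\pi^{[r^{d-k}]}(\psi_{d-k+1,\dots,d})$ and using Yamana's theorem to transport distinction from $\La(\Sp(k,\sigma))$ back to $\La(\Sp(d,\sigma))$, after which the $\psi_{1,\dots,d}$-Whittaker model of $\pi(\psi_{1,\dots,d})$ together with Theorem~\ref{theorem main} concludes the argument.

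The main technical point to check carefully will be the manipulation of the automorphic twist $\beta$: one must verify that $(\GL_n(\A_F)^1,\alpha)$-distinction, which is what falls out of the Fourier expansion of the $\SL_n(\A_F)$-period, can be upgraded to honest $\GL_n(\A_F)$-distinction of some automorphic twist so that Yamana's theorem applies verbatim. This is precisely what is sketched in Remark~\ref{remark finiteness}, and crucially a twist by an automorphic character of $\A_E^\times$ does not alter the $\SL$-packet, so distinction of the relevant member of the packet is preserved throughout the passage from $\Sp(d,\sigma)$ to $\sigma$ to $\Sp(k,\sigma)$ and back.
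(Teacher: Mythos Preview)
Your proposal is correct and follows exactly the approach the paper takes: the paper's proof simply states that one mirrors the local argument of Theorem~\ref{theorem dist rep with given wh inside generic of Speh}, using \cite[Theorem~1.2]{yam15} in place of \cite[Theorem~2.13]{mat14}, and your write-up is a faithful expansion of precisely that. The only cosmetic difference is that where you invoke Remark~\ref{remark finiteness} to upgrade $(\GL_n(\A_F)^1,\alpha)$-distinction to $\GL_n(\A_F)$-distinction of a twist, the paper packages this step via Lemma~\ref{lemma H1 distinction vs H distinction} and Proposition~\ref{proposition characterization of global distinguished L packets}, but the content is identical.
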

\begin{proof}
The proof is the same as that of Theorem \ref{theorem dist rep with given wh inside generic of Speh}, using \cite[Theorem 1.2]{yam15} in lieu of \cite[Theorem 2.13]{mat14}.
\end{proof}

\section{Characterization of distinguished square-integrable global L-packets}

Here we generalize the characterization of distinguished $\La$-packets given in \cite{ap06}, which turns out to be convenient in the proof of our main applications, namely the local-global principle inside distinguished L-packets of Section \ref{section local global} and the study of the behaviour of distinction with respect to higher multiplicity in Section \ref{questions}. The proof is based on the following well-known theorem which is a consequence of the works Jacquet and Shalika \cite{js81} on the one hand, and Flicker and Flicker-Zinoviev on the other hand \cite{fli88,fz95}. 

\begin{theorem}\label{theorem distinction vs conjugate duality}
Denote by $\omega_{E/F}$ the quadratic character attached to $E/F$ by global class field theory, and let $\widetilde{\pi}$ be a cuspidal automorphic representation of $\GL_n(\A_E)$. Then $\widetilde{\pi}$ is conjugate self-dual, i.e., $\widetilde{\pi}^\vee\simeq \widetilde{\pi}^\sigma$ if and only if $\pi$ is either distinguished or $\omega_{E/F}$-distinguished (and in fact not both together).
\end{theorem}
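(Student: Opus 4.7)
The plan is to deduce the theorem from the standard factorization of the Rankin--Selberg $L$-function on $\GL_n(\A_E) \times \GL_n(\A_E)$ as a product of two Asai $L$-functions, and then to combine the analytic information known on each of these three $L$-functions.

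First I would recall the factorization
\[
L(s, \widetilde{\pi} \times \widetilde{\pi}^{\sigma}) \;=\; L(s, \widetilde{\pi}, \mathrm{As}) \cdot L(s, \widetilde{\pi}, \mathrm{As} \otimes \omega_{E/F}),
\]
which at almost every finite place follows from the identity $\mathrm{As}(\widetilde{\pi}) \otimes_{F} E \simeq \widetilde{\pi} \boxtimes \widetilde{\pi}^{\sigma}$ at the level of Satake parameters and base change of $L$-factors, and globally from this local identity together with uniqueness of analytic continuations. I would then invoke the three key inputs: (i) by Jacquet--Shalika \cite{js81}, since $\widetilde{\pi}$ is cuspidal, $L(s, \widetilde{\pi} \times \widetilde{\pi}^{\sigma})$ has a pole at $s=1$ if and only if $\widetilde{\pi}^{\vee} \simeq \widetilde{\pi}^{\sigma}$, and in that case the pole is simple; (ii) by Flicker \cite{fli88}, $L(s, \widetilde{\pi}, \mathrm{As})$ has a pole at $s=1$ if and only if $\widetilde{\pi}$ is $\GL_n(\A_F)$-distinguished; (iii) applying the same Asai result to the twist $\widetilde{\pi} \otimes (\omega_{E/F}^{\prime}\circ\det)$ for any Hecke character $\omega_{E/F}^{\prime}$ of $\A_E^\times$ restricting to $\omega_{E/F}$ on $\A_F^\times$, one obtains that $L(s, \widetilde{\pi}, \mathrm{As} \otimes \omega_{E/F})$ has a pole at $s=1$ if and only if $\widetilde{\pi}$ is $\omega_{E/F}$-distinguished.

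From these three facts the first half of the theorem is immediate in both directions. If $\widetilde{\pi}$ is either distinguished or $\omega_{E/F}$-distinguished, then one of the two Asai $L$-functions has a pole at $s=1$, the other is holomorphic and non-vanishing there (both being non-vanishing on $\Re(s)=1$ by standard properties of $L$-functions of isobaric automorphic representations), so the left-hand side acquires a pole, hence $\widetilde{\pi}^{\vee} \simeq \widetilde{\pi}^{\sigma}$ by Jacquet--Shalika. Conversely, if $\widetilde{\pi}^{\vee} \simeq \widetilde{\pi}^{\sigma}$, the Rankin--Selberg $L$-function has a pole at $s=1$, and since this pole must come from at least one factor on the right, $\widetilde{\pi}$ is distinguished or $\omega_{E/F}$-distinguished.

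For the exclusivity (not both at once), the point I expect to be the main technical obstacle is ruling out a double pole: if both Asai $L$-functions had a pole at $s=1$, their product would have a pole of order $2$, contradicting the simplicity of the pole of the Rankin--Selberg $L$-function from Jacquet--Shalika. This dichotomy is precisely the content of Flicker--Zinoviev \cite{fz95}, which I would cite directly rather than reprove. Putting these pieces together yields the stated equivalence together with the exclusivity clause.
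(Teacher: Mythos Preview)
Your proposal is correct and follows essentially the same approach as the paper: both arguments rest on the factorization of the (partial) Rankin--Selberg $L$-function $L^S(s,\widetilde{\pi},\widetilde{\pi}^\sigma)$ as a product of two Asai $L$-functions, combined with Jacquet--Shalika's characterization of the pole of the Rankin--Selberg factor and Flicker's characterization of the pole of the Asai factor in terms of distinction. Your write-up is in fact slightly more explicit than the paper's (you spell out the non-vanishing on $\Re(s)=1$ and the double-pole contradiction for exclusivity), but the underlying strategy is identical.
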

\begin{proof}
Let $\pi_1$, $\pi_2$ and $\pi_3$ be cuspidal automorphic representations of $\GL_n(\A_E)$. By the aforementioned references, the partial Rankin-Selberg $L^S(s,\pi_1,\pi_2)$ has a pole at $s=1$, which is necessarily simple, if and only if $\pi_2 \simeq \pi_1^\vee$, whereas the partial Asai $L$-function $L_{\mathrm{As}}^S(s,\pi_3)$ has a pole (necessarily simple) at $s=1$ if and only if $\pi_3$ is $\GL_n(\A_F)$-distinguished. The result now follows from the equality 
 \[L^S(s,\pi_1,\pi_1^\sigma)=L_{\mathrm{As}}^S(s,\pi_1)L_{\mathrm{As}}^S(s,\omega\otimes \pi_1)\]
 where $\omega$ is any Hecke character of $\A_E^\times$ extending $\omega_{E/F}$.
\end{proof}

First it implies the following lemma. 

\begin{lemma}\label{lemma H1 distinction vs H distinction}
Let $\alpha$ be a character of $F^\times \backslash \A_F^1$ and $\sigma$ be a cuspidal automorphic representation of $\GL_r(\A_E)$ with central character $\omega$. The restriction of $\omega$ to $(\A_F)_{>0}$ coincides with the restriction of 
$|\ |_{\A_F}^{ir\lambda}$ for a some $\l\in \R$, and we extend $\alpha$ to 
$\A_F^\times$ as the automorphic character $\alpha_{-\lambda}$. Suppose that the period integral
\[\widetilde{p}_{r,\alpha^{-1}}^1:\phi\mapsto \int_{\GL_r(F)\backslash \GL_r(\A_F)^1}\phi(h)\alpha(\det(h))dh\] is nonzero on $\sigma$, then $\alpha^r$ and $\omega^{-1}$ coincide $\A_F^1$ i.e. $(\alpha_{-\lambda}\circ \det)^{-1}$ restricts as $\omega$ to $\A_F^\times$, and $\sigma$ is $\alpha_{-\l}^{-1}$-distinguished, hence $\sigma^\vee\simeq \alpha_{-\l}\circ N_{E/F}\otimes \sigma^\theta$. 
\end{lemma}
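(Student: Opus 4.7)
The plan is to establish the central-character equality first, then identify the $\GL_r(\A_F)^1$-period with one of the $\GL_r(\A_F)$-periods $\widetilde{p}_{r,\chi}$, and finally twist to reduce the conjugate duality to Theorem \ref{theorem distinction vs conjugate duality}. First, I note that the scalar matrices $zI_r$ with $z\in \A_F^1$ lie in $\GL_r(\A_F)^1$ and normalise the integration domain: substituting $h\mapsto zI_r\cdot h$ in the integral defining $\widetilde{p}_{r,\alpha^{-1}}^1(\phi)$ multiplies the integrand by $\omega(z)\alpha(z)^r$. Non-vanishing of this period on $\sigma$ therefore forces $\omega\cdot \alpha^r\equiv 1$ on $\A_F^1$, which is the first claim. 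Combined with the defining equality $\omega|_{(\A_F)_{>0}}=|\cdot|_{\A_F}^{ir\lambda}$ and $\alpha_{-\lambda}|_{(\A_F)_{>0}}=|\cdot|_{\A_F}^{-i\lambda}$, this upgrades to $\omega=\alpha_{-\lambda}^{-r}$ on all of $\A_F^\times$, i.e., to $(\alpha_{-\lambda}\circ\det)^{-1}|_{\A_F^\times}=\omega$, which is the central-character compatibility needed for $\widetilde{p}_{r,\alpha_{-\lambda}^{-1}}$ to make sense on the automorphic realisation of $\sigma$.

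Second, I use the alternative expression for $\widetilde{p}_{r,\chi}$ recalled immediately after its definition in Section \ref{section global}, namely as (a positive constant times) the integral over $\GL_r(F)\backslash \GL_r(\A_F)^1$. For $h\in \GL_r(\A_F)^1$ we have $\det h\in \A_F^1$, on which $\alpha_{-\lambda}$ restricts to $\alpha$; hence the integrands defining $\widetilde{p}_{r,\alpha_{-\lambda}^{-1}}$ and $\widetilde{p}_{r,\alpha^{-1}}^1$ agree. Non-vanishing of the latter therefore gives $\alpha_{-\lambda}^{-1}$-distinction of $\sigma$.

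Third, to extract the conjugate-duality statement I untwist. Pick an automorphic character $\beta$ of $\A_E^\times$ extending $\alpha_{-\lambda}$ (whose existence is standard Pontryagin duality applied to the closed embedding $F^\times\backslash \A_F^\times\hookrightarrow E^\times\backslash \A_E^\times$) and form the cuspidal twist $\sigma':=\sigma\otimes (\beta\circ \det)$. For $\phi\in \sigma$ and $\phi':=\phi\cdot (\beta\circ \det)\in\sigma'$ the identity $\widetilde{p}_{r,\mathbf{1}}(\phi')=\widetilde{p}_{r,\alpha_{-\lambda}^{-1}}(\phi)$ is immediate from $\beta|_{\A_F^\times}=\alpha_{-\lambda}$, so $\sigma'$ is $\GL_r(\A_F)$-distinguished. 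Theorem \ref{theorem distinction vs conjugate duality} yields $(\sigma')^\vee\simeq (\sigma')^\theta$, which after undoing the twist and using $\beta\cdot \beta^\theta=\alpha_{-\lambda}\circ N_{E/F}$ (valid on $\A_E^\times$ because $z\theta(z)\in\A_F^\times$ for every $z\in\A_E^\times$) reads $\sigma^\vee\simeq \sigma^\theta\otimes (\alpha_{-\lambda}\circ N_{E/F})$.

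I do not anticipate a genuine obstacle: all three steps are essentially bookkeeping with central characters and the decomposition $\A_F^\times=\A_F^1\times (\A_F)_{>0}$. The only mildly delicate point is checking that the extension $\beta$ can be chosen to be automorphic, but this is immediate from Pontryagin duality as indicated.
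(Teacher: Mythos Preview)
Your proposal is correct and follows essentially the same route as the paper: central-character considerations give the relation on $\A_F^1$, the decomposition $\A_F^\times=\A_F^1\times(\A_F)_{>0}$ identifies $\widetilde{p}_{r,\alpha^{-1}}^1$ with $\widetilde{p}_{r,\alpha_{-\lambda}^{-1}}$ up to a positive constant, and twisting by an automorphic extension $\beta$ of $\alpha_{-\lambda}$ reduces the conjugate-duality statement to Theorem~\ref{theorem distinction vs conjugate duality}. Your write-up merely makes the substitutions and the identity $\beta\beta^\theta=\alpha_{-\lambda}\circ N_{E/F}$ more explicit than the paper does.
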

\begin{proof}
The fact that $\alpha^r$ and $\omega^{-1}$ must coincide $\A_F^1$ if $\widetilde{p}_{r,\alpha^{-1}}^1$ does not vanish on 
$\widetilde{\pi}$ follows from central character considerations and the fact that $\widetilde{p}_{r,\alpha^{-1}}^1$ is
$\alpha^{-1}$-equivariant under $\GL_r(\A_F)^1$. But then for $\phi\in \widetilde{\pi}$ the function 
$\alpha_{-\l}\otimes \phi:g\mapsto \alpha_{-\l}(\det(g))\phi(g)$ is $\A_F^\times$-invariant and we conclude that 
$\widetilde{p}_{r,\alpha_{-\l}^{-1}}$ and $\widetilde{p}_{r,\alpha^{-1}}^1$ agree up to a positive constant, in particular 
$\sigma$ is $\alpha_{-\l}^{-1}$-distinguished. In particular 
for $\beta$ an automorphic character extending $\alpha_{-\l}$ to $\A_E^\times$, the representation $\beta\otimes \sigma$ is distinguished and we conclude that  $\sigma^\vee\simeq \alpha_{-\l}\circ N_{E/F}\otimes \sigma^\theta$ thanks to Theorem \ref{theorem distinction vs conjugate duality}.
\end{proof}

Now the characterization of square-integrable distinguished L-packets follows.

\begin{proposition}\label{proposition characterization of global distinguished L packets}
Let $\widetilde{\pi}=\Sp(d,\sigma)$ an irreducible square-integrable representation of $\GL_n(\A_E)$, with $\sigma$ a unitary cuspidal automorphic representation of $\GL_r(\A_E)$. Then $\La(\widetilde{\pi})$ is distinguished if and only if there is an automorphic character $\alpha\in \widehat{F^\times \backslash \A_F^\times}$ such that $\widetilde{\pi}^\vee\simeq \alpha\circ N_{E/F} \otimes \widetilde{\pi}^\theta$, or equivalently 
$\sigma^\vee \simeq \alpha\circ N_{E/F} \otimes \sigma^\theta$.
\end{proposition}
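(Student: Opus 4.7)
The plan is to prove the two implications separately, both by reducing to statements about $\sigma$ via the unfolding results of Yamana, and to first dispense with the equivalence of the two reformulations. Since Mœglin--Waldspurger's classification identifies $\Sp(d,-)$ as an injection from unitary cuspidal representations $\sigma$ to discrete series representations $\widetilde{\pi}=\Sp(d,\sigma)$, and the operations $(\cdot)^\vee$, $(\cdot)^\theta$, and character twists all commute with $\Sp(d,-)$, the condition $\widetilde{\pi}^\vee\simeq\alpha\circ N_{E/F}\otimes\widetilde{\pi}^\theta$ is equivalent to $\sigma^\vee\simeq\alpha\circ N_{E/F}\otimes\sigma^\theta$.

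For the forward direction, suppose $\La(\widetilde{\pi})$ contains an $\SL_n(\A_F)$-distinguished representation, so that $p_n$ is non-zero on $\widetilde{\pi}$. Using Proposition \ref{proposition fourier} and Remark \ref{remark finiteness}, together with the linear independence of the functionals $\widetilde p_{n,\alpha}^1$ for distinct characters $\alpha$ of $F^\times\backslash \A_F^1$ (they transform distinctly under $(\GL_n(\A_F)^1,\alpha)$-equivariance), one of these truncated periods is non-zero on $\widetilde{\pi}$. I would then iterate Theorem \ref{theorem twisted Yamana} a total of $d-1$ times: each application reexpresses the non-vanishing $(\GL_m(\A_F)^1,\alpha)$-period on an intermediate Speh representation $\Sp(e,\sigma)$ as an integral involving $\varphi_{U_{m,r},\psi}$, which by Remark \ref{remark constant term} lies (after a positive twist) in $\Sp(e-1,\sigma)$. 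After the full descent one obtains the non-vanishing of a truncated $(\GL_r(\A_F)^1,\alpha)$-period on the cuspidal $\sigma$, and Lemma \ref{lemma H1 distinction vs H distinction} yields an automorphic extension $\alpha_{-\lambda}$ of $\alpha$ to $\A_F^\times$ such that $\sigma^\vee\simeq\alpha_{-\lambda}\circ N_{E/F}\otimes\sigma^\theta$.

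For the converse, suppose $\sigma^\vee\simeq\alpha\circ N_{E/F}\otimes\sigma^\theta$ for some automorphic $\alpha$. I would pick an automorphic extension $\beta$ of $\alpha$ to $\A_E^\times$ (which exists since $F^\times=E^\times\cap\A_F^\times$ and $\alpha$ is trivial on $F^\times$); then $\beta\cdot\beta^\theta=\alpha\circ N_{E/F}$, so $\sigma':=\beta\otimes\sigma$ is conjugate self-dual. Theorem \ref{theorem distinction vs conjugate duality} implies $\sigma'$ is either distinguished or $\omega_{E/F}$-distinguished; replacing $\beta$ by $\omega\cdot\beta$ for an automorphic extension $\omega$ of $\omega_{E/F}$ if necessary, we may assume $\sigma'$ is distinguished. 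Then by Yamana \cite[Theorem 1.2]{yam15}, $\Sp(d,\sigma')=\beta\otimes\widetilde{\pi}$ is $\GL_n(\A_F)$-distinguished, hence $\widetilde p_{n,\mathbf 1}^1$ does not vanish on it, and the same linear independence argument above forces $p_n$ to be non-zero on it. Since the $\La$-packet is unchanged by automorphic $\A_E^\times$-twists, $\La(\widetilde{\pi})=\La(\beta\otimes\widetilde{\pi})$ is distinguished.

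The main obstacle is the forward direction, where the iterative application of Theorem \ref{theorem twisted Yamana} requires compatibility of the character $\alpha$ and of the auxiliary Whittaker-type character $\psi$ through each of the $d-1$ stages, so that the final period on $\sigma$ is indeed of the form to which Lemma \ref{lemma H1 distinction vs H distinction} applies. The bookkeeping across this descent is the delicate step, while the converse is essentially a formal consequence of Yamana's result and Flicker--Zinoviev's characterization of conjugate self-duality.
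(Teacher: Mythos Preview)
Your proposal is correct, and the converse direction matches the paper's argument essentially verbatim (you are in fact slightly more careful than the paper about the possible $\omega_{E/F}$-twist when invoking Theorem~\ref{theorem distinction vs conjugate duality}).

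For the forward direction you take a genuinely different and more laborious route than the paper. You iterate Theorem~\ref{theorem twisted Yamana} $d-1$ times to descend from a nonvanishing $(\GL_n(\A_F)^1,\alpha)$-period on $\widetilde{\pi}$ all the way down to a nonvanishing $(\GL_r(\A_F)^1,\alpha)$-period on the cuspidal $\sigma$, and only then apply Lemma~\ref{lemma H1 distinction vs H distinction}. This works, but the bookkeeping you flag as the ``main obstacle'' is entirely avoidable. The paper instead observes that the proof of Lemma~\ref{lemma H1 distinction vs H distinction} up to the assertion ``$\sigma$ is $\alpha_{-\lambda}^{-1}$-distinguished'' uses nothing about cuspidality; so from Proposition~\ref{proposition fourier} one concludes directly that some automorphic twist of $\widetilde{\pi}$ itself is $\GL_n(\A_F)$-distinguished. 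From there, conjugate self-duality of $\widetilde{\pi}$ (equivalently of $\sigma$) follows in one step, either via \cite[Theorem 1.2]{yam15} combined with Theorem~\ref{theorem distinction vs conjugate duality} applied to $\sigma$, or via the local-to-global argument in Remark~\ref{remark finiteness}. Your iterated-unfolding approach gives the same conclusion and has the minor advantage of landing directly on $\sigma$, but at the cost of tracking the character $\alpha$, the auxiliary $\psi$, and the translates through $d-1$ stages; the paper's shortcut sidesteps all of this.

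A small point: in your forward direction, going from $p_n\neq 0$ to some $\widetilde p^1_{n,\alpha}\neq 0$ requires only the sum formula of Proposition~\ref{proposition fourier}, not linear independence; the linear-independence-of-characters argument is needed (and correctly invoked) only in the converse direction.
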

\begin{proof}
If $\widetilde{\pi}^\vee\simeq \alpha\circ N_{E/F} \otimes \widetilde{\pi}^\theta\Leftrightarrow\sigma^\vee\simeq \alpha\circ N_{E/F}\otimes \sigma^\theta$, then $\alpha\otimes \sigma$ is conjugate self-dual hence an automorphic twist of $\sigma$ distinguished by $\GL_r(\A_F)$ thanks to Theorem \ref{theorem distinction vs conjugate duality}. Hence by 
\cite[Theorem 1.2]{yam15} an automorphic twist of $\widetilde{\pi}$ is distinguished by $\GL_n(\A_F)$, and $\La(\widetilde{\pi})$ is distinguished thanks to Proposition \ref{proposition fourier} by a straightforward generalization of the second part of the proof of \cite[Proposition 3.2]{ap06}. Conversely if $\La(\widetilde{\pi})$ is distinguished, then by Proposition \ref{proposition fourier} and Lemma \ref{lemma H1 distinction vs H distinction}, an automorphic twist of $\widetilde{\pi}$ is distinguished and the result follows from Theorem \ref{theorem distinction vs conjugate duality}.
\end{proof}

\section{Local global principle for distinguished $L$-packets when $r$ is odd}\label{section local global}

This section establishes a local global principle for distinction inside a square-integrable $L$-packet of type $r^d$ of $\SL_n(\A_E)$, when $r$ is odd. 

Our proof makes use of the set up of \cite[Section 7]{ap13} where such a result is proved for a cuspidal $L$-packet of $\SL_2(\A_E)$. The proof over there is somewhat intricate and relied crucially on an analysis of the fibers of the Asai lift (see \cite[Remark in Section 7]{ap13}). Here our arguments are more elementary due to the fact that $r$ is odd. This is consistent with the earlier works \cite{ana05,ap18}. 

For the moment however $r$ is general. Let $\pi$ be an irreducible square-integrable automorphic representation of $\SL_n(\A_E)$ and denote by $\widetilde{\pi}$ a square-integrable automorphic representation of $\GL_n(\A_E)$ such that $\pi$ is realized in $\Res(\widetilde{\pi})$. 

We borrow the notations of \cite[Section 7]{ap13}. We consider $\A_E^\times$ as a subgroup of $\GL_n(\A_E)$ via $x \mapsto$ diag$(x,I_{n-1})$. This group acts by conjugation on isomorphism classes of an irreducible representation $\pi$ of $\SL_n(\A_E)$. The orbit of $\pi$ under this action is the representation theoretic $L$-packet of $\pi$, say $\La^\prime(\pi)$.  Let $G(\pi) < \A_E^\times$ be the stabilizer of $\pi$. Then, see \cite[p. 23]{hs12}, 
\[G_\pi = \bigcap_{\chi \in X(\widetilde{\pi})} {\rm Ker~} \chi \]
where 
\[X(\widetilde{\pi}) = \{\chi \in \widehat{E^\times \backslash \A_E^\times} \mid \widetilde{\pi} \otimes \chi \cong \widetilde{\pi}\}\]
which is a finite abelian group (cf. Remark \ref{remark finiteness}).

\begin{remark}\label{remark packets}
Note that $\La(\pi)$ consists of the automorphic members of $\La^\prime(\pi)$. Indeed clearly $\La(\pi)$ is included in this set. On the other hand, if $\pi'$ is an automorphic member of $\La^\prime(\pi)$, then it has a degenerate $\psi$-Whittaker model of type $r^d$ thanks to Proposition \ref{proposition degenerate whittaker model inside global L packets}. However $\La(\pi)$ also contains a member $\pi''$ with a degenerate $\psi$-Whittaker model according to Corollary \ref{corollary transitive action on L packets}. We conclude that $\pi'\simeq \pi''$ by local uniqueness of degenerate Whittaker models.
\end{remark}

We start with an elementary observation.
\begin{proposition}\label{proposition elementary}
Suppose $\widetilde{\pi}$ is a square-integrable automorphic representation of $\GL_n(\A_E)$ which is Galois conjugate self-dual; i.e., $\widetilde{\pi}^\vee \cong \widetilde{\pi}^\theta$, and that $\pi\in \La(\widetilde{\pi})$. Then $G_\pi$ is stable under the action of $\theta$.
\end{proposition}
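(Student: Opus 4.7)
The plan is to reduce the stability of $G_\pi$ under $\theta$ to the stability of the finite group $X(\widetilde{\pi})$ under the involution $\chi\mapsto \chi^\theta$, where $\chi^\theta$ denotes the character $y\mapsto \chi(\theta(y))$ on $\A_E^\times$. Using the description $G_\pi=\bigcap_{\chi\in X(\widetilde{\pi})}\Ker(\chi)$ recalled just above the statement, for any $x\in G_\pi$ and $\chi\in X(\widetilde{\pi})$, one has $\chi(\theta(x))=\chi^\theta(x)$, so in order to conclude that $\theta(x)\in G_\pi$ it is enough to check that $\chi^\theta$ again belongs to $X(\widetilde{\pi})$ whenever $\chi$ does.

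I will therefore verify directly that $X(\widetilde{\pi})$ is stable under $\chi\mapsto \chi^\theta$. Starting from an isomorphism $\widetilde{\pi}\otimes \chi\cong \widetilde{\pi}$, applying the Galois action gives $\widetilde{\pi}^\theta\otimes \chi^\theta\cong \widetilde{\pi}^\theta$; invoking the conjugate self-duality hypothesis $\widetilde{\pi}^\theta\cong \widetilde{\pi}^\vee$ this becomes $\widetilde{\pi}^\vee\otimes \chi^\theta\cong \widetilde{\pi}^\vee$, and taking contragredients yields $\widetilde{\pi}\otimes (\chi^\theta)^{-1}\cong \widetilde{\pi}$. Hence $(\chi^\theta)^{-1}\in X(\widetilde{\pi})$, and since $X(\widetilde{\pi})$ is a group (indeed a finite abelian group, as noted in Remark \ref{remark finiteness}), it is closed under inversion, so $\chi^\theta\in X(\widetilde{\pi})$ as desired. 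No real obstacle appears: the argument is a one-line manipulation of the defining isomorphism of $X(\widetilde{\pi})$ combined with the hypothesis $\widetilde{\pi}^\vee\cong \widetilde{\pi}^\theta$ and the fact that $G_\pi$ is cut out by $X(\widetilde{\pi})$.
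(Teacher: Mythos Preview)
Your proof is correct and is essentially the paper's first argument: the paper likewise observes that conjugate self-duality forces $X(\widetilde{\pi})$ to be Galois stable and then deduces the stability of $G_\pi=\bigcap_{\chi\in X(\widetilde{\pi})}\Ker(\chi)$; you have simply spelled out the one-line verification that $\chi\in X(\widetilde{\pi})\Rightarrow \chi^\theta\in X(\widetilde{\pi})$. The paper additionally records an alternative argument, noting that $G_{\pi'}$ depends only on $\La(\pi')$ and that $x\in G_\pi$ implies $x^\theta\in G_{\pi^\theta}=G_{\pi^\vee}=G_\pi$.
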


\begin{proof}
As $\widetilde{\pi}$ is Galois conjugate self-dual, it follows that the finite abelian group $X(\widetilde{\pi})$ is stable under the Galois action, and thus $G_\pi$ is Galois stable. Alternatively, note that if $\pi_1$ and $\pi_2$ are in the same $L$-packet then $G_{\pi_1} = G_{\pi_2}$. Indeed, $\pi_2 = \pi_1^y$, for some $y \in \A_E^\times$, and by definition, $G_{\pi_2} = y^{-1}G_{\pi_1} y = G_{\pi_1}$ as the groups are abelian. In particular, $G_{\pi^\theta} = G_{\pi^\vee}$ as $\widetilde{\pi}^\vee \cong \widetilde{\pi}^\theta$. Observe also that $G_{\pi^\vee}=G_\pi$. Thus, if $x \in G_\pi$ then $x^\theta \in G_{\pi^\theta} = G_{\pi^\vee} = G_\pi$. 
\end{proof}

\emph{From now on, we assume that $E$ is split at the archimedean places, so that the archimedean analogue of 
Theorem \ref{theorem main p-adic} obviously holds}. 

As in \cite[Section 7]{ap13}, we define the following groups, 
\begin{eqnarray*}
H_0&=& \A_E^\times, \\
H_1 & = & \A_F^\times G_\pi, \\
H_2 & =& E^\times G_{\pi}, \\
H_3 & = &  F^\times G_{\pi},
\end{eqnarray*}
and we observe that  
\begin{enumerate}
\item The set $H_0 \cdot \pi$ is the $L$-packet of representations of $\SL_n(\A_E)$ determined by $\pi$ (see, for instance, \cite[Corollary 2.8]{hs12}).
\item The set $H_1 \cdot \pi$ is the set of locally distinguished representations in the $L$-packet of $\SL_n(\A_E)$ determined by $\pi$ (by Theorem \ref{theorem main p-adic} and its archimedean analogue). 
\item The set $H_2 \cdot \pi$ is the set of automorphic representations in the $L$-packet of $\SL_n(\A_E)$ determined by $\pi$ (by Proposition \ref{corollary transitive action on L packets}).
\item The set $H_3 \cdot \pi$ is the set of globally distinguished representations in the $L$-packet of $\SL_n(\A_E)$ determined by $\pi$ (by Proposition \ref{corollary transitive on dist}).
\end{enumerate}

We also record the following observation as a lemma.

\begin{lemma}\label{lemma x to r fixes pi}
Let $\pi$ as above be of type $r^d$, then for an $x\in \A_E^\times$, we have $x^r\in G_\pi$.
\end{lemma}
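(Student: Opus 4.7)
The plan is to use the characterization $G_\pi=\bigcap_{\chi\in X(\widetilde{\pi})}\mathrm{Ker}(\chi)$ recalled just above the lemma, and reduce the claim to showing that every $\chi\in X(\widetilde{\pi})$ satisfies $\chi^r\equiv 1$. Once this is known, for any $x\in\A_E^\times$ and any $\chi\in X(\widetilde{\pi})$ we get $\chi(x^r)=\chi(x)^r=1$, so $x^r\in G_\pi$.

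To establish $\chi^r\equiv 1$ for $\chi\in X(\widetilde{\pi})$, I would proceed as follows. First, since $\pi$ is of type $r^d$, there is a cuspidal automorphic representation $\sigma$ of $\GL_r(\A_E)$ such that $\La(\pi)=\La(\widetilde{\pi})$ with $\widetilde{\pi}=\Sp(d,\sigma)$ (the ambient $\GL_n$-representation is determined only up to an automorphic character twist, but $X(\widetilde{\pi})$ does not depend on this choice). For $\chi\in X(\widetilde{\pi})$, the relation
\[
\Sp(d,\sigma)\otimes\chi=\Sp(d,\sigma\otimes\chi)\cong\Sp(d,\sigma)
\]
combined with the uniqueness of the pair $(\sigma,d)$ in the Moeglin--Waldspurger classification of the residual spectrum forces $\sigma\otimes\chi\cong\sigma$, i.e.\ $\chi\in X(\sigma)$.

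Finally, comparing central characters of the two isomorphic cuspidal representations $\sigma$ and $\sigma\otimes\chi$ of $\GL_r(\A_E)$, we have $\omega_{\sigma\otimes\chi}=\chi^r\,\omega_{\sigma}=\omega_{\sigma}$, hence $\chi^r\equiv 1$, as required. No step here looks substantial: the only nontrivial input is the multiplicity-one/uniqueness statement of Moeglin--Waldspurger already invoked earlier in the paper, and the rest is bookkeeping with central characters. Thus the proof is essentially a one-line central-character comparison once the description of $G_\pi$ via $X(\widetilde{\pi})$ is in place.
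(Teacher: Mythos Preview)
Your proof is correct, but it follows a different route from the paper's. The paper argues via degenerate Whittaker models: since $\pi$ has a $\psi_{1,\dots,d}$-Whittaker model for some automorphic character $\psi_{1,\dots,d}$ of type $r^d$, and since the element $\diag(xI_r,I_{n-r})$ (whose determinant is $x^r$) fixes $\psi_{1,\dots,d}$ under conjugation, local uniqueness of degenerate Whittaker models forces $\pi_v^{\diag(x_vI_r,I_{n-r})}=\pi_v$ at every place, hence $\pi^{\diag(xI_r,I_{n-r})}\simeq\pi$, i.e.\ $x^r\in G_\pi$.

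Your argument instead uses the description $G_\pi=\bigcap_{\chi\in X(\widetilde{\pi})}\Ker(\chi)$ directly, reducing to $\chi^r\equiv 1$ for every self-twist $\chi$ of $\widetilde{\pi}=\Sp(d,\sigma)$; this follows from Moeglin--Waldspurger uniqueness (giving $\sigma\otimes\chi\cong\sigma$) and a central-character comparison on $\GL_r$. This is arguably cleaner and more structural: it avoids the Whittaker-model machinery entirely and relies only on standard facts about the residual spectrum that the paper has already invoked (indeed, the reduction $X(\widetilde{\pi})\subset X(\sigma)$ appears in Remark~\ref{remark finiteness}). The paper's approach, on the other hand, stays within the degenerate Whittaker framework that drives the rest of the article, which makes it thematically natural there even if slightly less direct.
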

\begin{proof}
We observe that if $\pi$ has a $\psi_{1,\dots,d}$-Whittaker model with respect to 
the automorphic character $\psi_{1,\dots,d}$, then 
\[\pi^{\diag(xI_r,I_{n-r})}\in \La'(\pi).\] 
In particular, 
for finite places $v$, the local representation $\pi_v^{\diag(x_v I_r,I_{n-r})}$ has a $\psi_{{1,\dots,d},v}$-Whittaker model because $\diag(x_vI_r,I_{n-r})$ fixes $\psi_{{1,\dots,d},v}$ by conjugation, hence both 
$\pi_v$ and $\pi_v^{\diag(xI_r,I_{n-r})}$ have a $\psi_{{1,\dots,d},v}$-Whittaker model inside $\La(\pi_v)$, so they are equal, and the lemma follows.
\end{proof}

Next we state the local global principle for $(\SL_n(\A_E),\SL_n(\A_F))$ for square-integrable automorphic representations (for $r$ odd).

\begin{theorem}\label{theorem localglobal}
Let $\pi$ be an irreducible square-integrable automorphic representation of $\SL_n(\A_E)$ such that 
$\La(\pi)$ is distinguished. Assume that $r$ is odd and write $\pi =\otimes'_v \pi_v$ but this time for $v$ varying through the places of $F$ (hence here $\pi_v$ is $\pi_w$ for $w$ the place in $E$ lying over $v$ if $v$ does not split in $E$, and 
$\pi_v=\pi_{w_1}\otimes \pi_{w_2}$ if $v$ splits into $(w_1,w_2)$). Then, $\pi$ is distinguished with respect to $\SL_n(\A_F)$ if and only if each $\pi_v$ is $\SL_n(F_v)$-distinguished. 
\end{theorem}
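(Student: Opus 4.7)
The plan is to exploit the four-subgroup setup just established. Fix $\pi_0 \in \La(\pi)$ that is globally distinguished (which exists by hypothesis); then by items (1)--(4) in the enumeration above, the $L$-packet of $\pi$ is $H_0 \cdot \pi_0$, its locally distinguished members form $H_1 \cdot \pi_0$, its automorphic members form $H_2 \cdot \pi_0$, and its globally distinguished members form $H_3 \cdot \pi_0$. The direction ``global distinction $\Rightarrow$ local distinction everywhere'' will be immediate from Theorem \ref{theorem main}: one picks a degenerate character $\psi$ of type $r^d$ of $N_n(\A_E)$ trivial on $N_n(E+\A_F)$ on which $p_\psi$ does not vanish on $\pi$; each local $\psi_v$ is then trivial on $N_n(F_v)$ and $\pi_v$ admits a degenerate $\psi_v$-Whittaker model, so $\pi_v$ is locally distinguished by Theorem \ref{theorem main p-adic} (the archimedean case being trivial since $E/F$ is split at infinity).

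For the converse, I would write $\pi = y \cdot \pi_0$ for some $y \in H_0 = \A_E^\times$. Local distinction of $\pi$ everywhere gives $y \in H_1 = \A_F^\times G_\pi$, and the automorphy of $\pi$ gives $y \in H_2 = E^\times G_\pi$. Decomposing $y = a g$ with $a \in \A_F^\times$ and $g \in G_\pi$, the element $a$ lies in $\A_F^\times \cap E^\times G_\pi$, and the problem reduces to showing $a \in F^\times G_\pi = H_3$; further writing $a = e g'$ with $e \in E^\times$ and $g' \in G_\pi$, it suffices to show $e \in F^\times G_\pi$. Next, I would observe that $G_\pi$ is Galois-stable: the distinguished-$\La$-packet assumption combined with Proposition \ref{proposition characterization of global distinguished L packets} gives $\widetilde{\pi}^\vee \simeq \alpha \circ N_{E/F} \otimes \widetilde{\pi}^\theta$, so $X(\widetilde{\pi})$ is Galois-stable (being invariant under both twisting by characters and contragredient), and hence so is $G_\pi$; this extends Proposition \ref{proposition elementary}. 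Applying $\theta$ to $a = eg'$ and using $a^\theta = a$, I get $e/e^\theta = (g')^\theta / g' \in G_\pi$.

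The identity $e^2 = (e \cdot e^\theta)(e/e^\theta) = N_{E/F}(e) \cdot (e/e^\theta)$ then places $e^2$ in $F^\times G_\pi$, while Lemma \ref{lemma x to r fixes pi} applied to $e \in E^\times \subset \A_E^\times$ places $e^r$ in $G_\pi$. Here is where the oddness of $r$ intervenes decisively: since $\gcd(r,2) = 1$, picking integers $s, t$ with $rs + 2t = 1$ yields $e = (e^r)^s (e^2)^t \in F^\times G_\pi$, hence $a \in H_3$ and $\pi \in H_3 \cdot \pi_0$ is globally distinguished. The main obstacle overcome by the hypothesis that $r$ is odd is precisely this last step: without it, knowing only $e^2 \in F^\times G_\pi$ and $e^r \in G_\pi$ with $r$ even no longer determines the class of $e$ modulo $F^\times G_\pi$, which is the algebraic shadow of the subtlety that forced the considerably more delicate analysis of fibres of the Asai lift in the $r = 2$ case treated in \cite{ap13}.
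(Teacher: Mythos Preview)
Your proof is correct and follows essentially the same approach as the paper's: both reduce the converse to showing $(H_1 \cap H_2)/H_3$ is trivial, establish Galois-stability of $G_\pi$, and then use that the element in question has both its square and its $r$-th power in $F^\times G_\pi$, finishing via $\gcd(r,2)=1$. The only cosmetic differences are that the paper works directly with $x \in \A_F^\times \cap E^\times G_\pi$ (your $a$) and computes $x^2 = xx^\theta = hh^\theta kk^\theta \in F^\times G_\pi$ in one line, whereas you pass to the $E^\times$-component $e$ first; and the paper dispatches the forward direction as ``obvious'' (since $H_3 \subseteq H_1$), whereas you spell it out via Theorems \ref{theorem main} and \ref{theorem main p-adic}.
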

\begin{proof}
One direction is obvious, so we suppose that $\pi$ is locally distinguished. We can always suppose that $\widetilde{\pi}$ 
conjugate self-dual by Proposition \ref{proposition characterization of global distinguished L packets}.

The group $G_\pi$ is Galois stable by Proposition \ref{proposition elementary}. As in \cite[Theorem 7.1]{ap13}, we need to prove that the group \[(H_1 \cap H_2)/H_3\] is trivial. In order to show that $H_1 \cap H_2 \subseteq H_3$, we claim that $H_2 \cap \A_F^\times \subseteq H_3$. 

So let $x \in E^\times G_\pi \cap \A_F^\times$. Note that $x^2=x x^\theta$, as $x \in \A_F^\times$. Since $G_\pi$ is Galois stable, we see that $x^2 \in F^\times G_\pi = H_3$. Indeed, writing $x=hk$, $h \in E^\times, k \in G_\pi$, we get
\[x^2=xx^\theta= hk h^\theta k^\theta = hh^\theta kk^\theta \in F^\times G_\pi.\]
Also $x^r \in G_\pi$ by Lemma \ref{lemma x to r fixes pi}. We have thus shown that both $x^2$ and $x^r$ are in $H_3$. It follows that $x \in H_3$, as $r$ is odd. 
\end{proof}

\begin{remark}
The simplifying role played by the fact that $r$ is odd in the proof of Theorem \ref{theorem localglobal} is quite analogous to its role in the proof of local multiplicity one, when $n$ is odd, for the pair $(\SL_n(E),\SL_n(F))$ (see \cite[p. 183]{ana05} or \cite[p. 1703]{ap18}).
\end{remark}

\section{Higher multiplicity for $\SL_n$}\label{section-higher}

We now suppose $n\geq 3$ and recall consequences of the works of Blasius, Lapid and Hiraga-Saito \cite{bla94,lap98,lap99,hs05,hs12}. This section contains no original result. 

\subsection{Different notions of multiplicity}\label{section various multiplicities}
Let $\pi$ be a cuspidal automorphic representation $\pi$ of $\SL_n(\A_E)$. We set 
\[m(\pi)=\dim \Hom_{\SL_n(\A_E)}(\pi,\mathcal{A}^{\infty}_0(\SL_n(E)\backslash \SL_n(\A_E))\] and call it the multiplicity of $\pi$ in the cuspidal spectrum of $\SL_n(\A_E)$.
There are several other notions of multiplicity for $\pi$, both on the automorphic side and on the Galois parameter side of the putative global Langlands correspondence. We shall need to pass from one to another and we explain the process in this paragraph. We follow \cite[p. 293]{lap98} and \cite[p. 162]{lap99}. First we consider the automorphic side. Thus, let $\widetilde{\pi}$ and $\widetilde{\pi}'$ be two cuspidal representations of $\GL_n(\A_E)$. We write:
\begin{enumerate}
\item[(i)] $\widetilde{\pi} \sim_s \widetilde{\pi}'$ if $\widetilde{\pi} \simeq \widetilde{\pi}' \otimes \eta$ for a Hecke character $\eta$ of $\A_E^\times$, 
\item[(ii)] $\widetilde{\pi} \sim_{ew} \widetilde{\pi}'$ if $\widetilde{\pi}_v \simeq \widetilde{\pi}_v' \otimes \eta_v$ for a character $\eta_v$ of $E_v^\times$ at each place $v$ of $E$
\item[(iii)] $\widetilde{\pi} \sim_w \widetilde{\pi}'$ if $\widetilde{\pi}_v \simeq \widetilde{\pi}_v' \otimes \eta_v$ for a character $\eta_v$ of $E_v^\times$ for almost places $v$ of $E$.
\end{enumerate}
One denotes by $M(\La(\widetilde{\pi}))$ the number of $\sim_s$ equivalence classes in the $\sim_{ew}$ equivalence class of $\widetilde{\pi}$, and by $\mathcal M(\La(\widetilde{\pi}))$ the number of $\sim_s$ equivalence classes in the $\sim_{w}$ equivalence class of $\widetilde{\pi}$. It was expected by Labesse and Langlands (\cite{ll79}) that if $\pi$ is a cuspidal automorphic representation of $\SL_n(\A_E)$ contained in $\La(\widetilde{\pi})$, 
then its multiplicity $m(\pi)$ inside the cuspidal automorphic spectrum is equal to $M(\La(\widetilde{\pi}))$ so that in particular $M(\La(\widetilde{\pi}))$ is finite. This was proved for $\SL_2(\A_E)$ in \cite{ll79} and in general for $\SL_n(\A_E)$ by Hiraga and Saito \cite[Theorem 1.6]{hs12}.

On the other hand the multiplicity $\mathcal M(\La(\widetilde{\pi}))$, which is conjectured to be finite and bounded by a function of $n$ in \cite[Conjecture 1]{lap99}, is certainly at least equal to $M(\La(\widetilde{\pi}))$ by definition, and related to a similar multiplicity on the ``Galois parameter side". To this end we introduce equivalence  relations $\sim_s$ and $\sim_w$ on the set of representations of a group $G$. Let $\phi$ and $\phi'$ be two 
morphisms from $G$ to $\mathrm{GL}_n(\C)$, we write:
\begin{enumerate}
\item[(i)] $\phi \sim_s \phi'$ if there is $x\in \mathrm{PGL}_n(\C)$ such that $\overline{\phi'(g)}=x^{-1}\overline{\phi(g)} x\in \mathrm{PGL}_n(\C)$ for all $g\in G$, in which case we say that $\phi$ and $\phi'$ are strongly equivalent.
\item[(ii)] $\phi \sim_w \phi'$ if for all $g\in G$, there is $x_g\in \mathrm{PGL}_n(\C)$ such that $\overline{\phi'(g)}=x_g^{-1}\overline{\phi(g)} x_g\in \mathrm{PGL}_n(\C)$, in which case we say that $\phi$ and $\phi'$ are weakly equivalent.
\end{enumerate}
We denote by $\mathcal M(\phi)$ the number of $\sim_s$ equivalence classes in the $\sim_{w}$ equivalence class of $\phi$. One of the main achievements of \cite{lap98,lap99} is the following result (cf. \cite[Theorem 6]{lap98} and \cite[Theorem 2]{lap99}).

\begin{theorem}\label{theorem Lapid multiplicity}
Let $L$ be a Galois extension of $E$ with respective Weil groups $W_L$ and $W_E$ such that $\Gal(L/E)$ is nilpotent, and let $\chi$ be a Hecke character of $\A_E^\times$ such that $\phi=\Ind_{W_L}^{W_E}(\chi)$ is irreducible. Denote by 
$\widetilde{\pi}=\widetilde{\pi}(\phi)$ the cuspidal automorphic representation of $\GL_n(\A_E)$ associated to $\Ind_{W_E}^{W_F}(\chi)$ by \cite{ac89}. Then $\mathcal M(\phi)=\mathcal M(\La(\widetilde{\pi}))$.
\end{theorem}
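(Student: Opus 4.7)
The strategy is to construct an explicit bijection between the set of $\sim_s$-classes inside the $\sim_w$-class of $\phi$ and the set of $\sim_s$-classes inside the $\sim_w$-class of $\widetilde{\pi}=\widetilde{\pi}(\phi)$, via the assignment $\phi'\mapsto \widetilde{\pi}(\phi')$ given by the Arthur-Clozel automorphic induction, applicable thanks to the nilpotence of $\Gal(L/E)$ which allows a step-by-step cyclic base change. The basic input is that this assignment intertwines with the local Langlands correspondence at every place: $\widetilde{\pi}(\phi')_v$ matches $\phi'_v$ under local Langlands for $\GL_n$.

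Compatibility with $\sim_s$ is the straightforward half. We have $\phi'\sim_s\phi$ if and only if $\phi'=\phi\otimes\eta$ for a Hecke character $\eta$ of $\A_E^\times$, because two $n$-dimensional representations of $W_E$ which are projectively equal must differ by a character; automorphic induction commutes with such twists, so $\widetilde{\pi}(\phi')\simeq\widetilde{\pi}(\phi)\otimes\eta$, and conversely strong multiplicity one for cuspidal representations of $\GL_n(\A_E)$ together with injectivity of local Langlands shows that $\widetilde{\pi}(\phi')\sim_s\widetilde{\pi}(\phi)$ forces $\phi'\sim_s\phi$. Compatibility with $\sim_w$ in one direction is also clear: if $\phi\sim_w\phi'$, then at each place $v$ the local parameters $\phi_v$ and $\phi'_v$ are projectively conjugate (by restriction to the local Weil group and semisimplification of the Frobenius image), hence $\widetilde{\pi}(\phi)_v$ and $\widetilde{\pi}(\phi')_v$ are twists of each other, giving $\widetilde{\pi}(\phi)\sim_w\widetilde{\pi}(\phi')$.

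The core and most delicate step is the converse: starting from a cuspidal automorphic $\widetilde{\pi}'$ in the $\sim_w$-class of $\widetilde{\pi}$, one must produce a Galois parameter $\phi'=\Ind_{W_L}^{W_E}(\chi')$ in the $\sim_w$-class of $\phi$ with $\widetilde{\pi}(\phi')=\widetilde{\pi}'$. Local twist-equivalence at almost every unramified place $v$ yields projective conjugacy of $\phi'_v$ and $\phi_v$ at Frobenius elements, and Chebotarev density spreads this to a dense subset of $W_E/W_L$; the main obstacle is upgrading this to pointwise projective conjugacy on all of $W_E$ and to a compatible global induced structure from $W_L$. This is precisely where the nilpotent hypothesis on $\Gal(L/E)$ is essential: one descends inductively through the cyclic quotients of the composition series of $\Gal(L/E)$ and uses Mackey-theoretic rigidity of the induced representation, combined with unicity of Arthur-Clozel's lift at each cyclic step, to patch the local projective-conjugacy data into a single character $\chi'$ of $W_L$ whose induction is the required $\phi'$. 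The finiteness of $\mathcal M(\phi)$, hence of $\mathcal M(\La(\widetilde{\pi}))$, drops out of this description as a bounded cohomological invariant.
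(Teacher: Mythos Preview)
The paper does not give its own proof of this theorem: it is stated as a citation of Lapid's work (\cite[Theorem 6]{lap98} and \cite[Theorem 2]{lap99}), with only the subsequent Remark~\ref{remark Lapid Galois type} indicating the shape of Lapid's argument --- namely that Chebotarev density is used to make $\sim_s$ and $\sim_w$ compatible across the Galois and automorphic sides, and that any cuspidal $\widetilde{\pi}'\sim_w\widetilde{\pi}$ is shown to be of Galois type.

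Your sketch is broadly consonant with that outline: you correctly identify that the assignment $\phi'\mapsto\widetilde{\pi}(\phi')$ should induce the desired bijection, that the $\sim_s$-compatibility is the easy half, and that the substantive content lies in producing a Galois parameter $\phi'$ for an arbitrary $\widetilde{\pi}'$ in the weak equivalence class of $\widetilde{\pi}$. Your invocation of Chebotarev and of the nilpotent tower of cyclic steps matches what Lapid actually does. Where your account remains thin is precisely the point the paper's remark isolates as the crux: the passage ``patch the local projective-conjugacy data into a single character $\chi'$'' elides the real work, which in Lapid's papers involves a careful argument that $\widetilde{\pi}'$ is itself automorphically induced (of Galois type), not merely that some candidate $\phi'$ exists pointwise. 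Since the paper offers no proof beyond the citation, there is nothing further to compare; your sketch is a reasonable summary of the cited argument rather than an alternative to anything in the present paper.
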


\begin{remark} \label{remark Lapid Galois type} In the proof of this result Lapid invokes the Chebotarev density theorem to argue that for such representations, the relations $\sim_s$ and $\sim_w$ are compatible on the Galois parameter side and the automorphic side, and shows that if 
$\widetilde{\pi'}\sim_w \widetilde{\pi}$ (i.e., almost everywhere a twist of $\widetilde{\pi}$) for $\widetilde{\pi}$ as in 
the statement of Theorem \ref{theorem Lapid multiplicity}, then $\widetilde{\pi}'$ is of Galois type, i.e., there exists a Galois representation $\phi'$, necessarily unique, of $W_E$ with Satake parameters equal to those of $\widetilde{\pi}'$ at almost every place of $E$. We shall use these facts as well in what follows.
\end{remark}

\begin{remark} \label{remark all equal multiplicities} In particular suppose that $\widetilde{\pi}$ and $\mathcal M(\phi)$ are as in 
the statement of Theorem \ref{theorem Lapid multiplicity}, and suppose moreover that the weak equivalence class of $\widetilde{\pi}$ (its $\sim_w$ class) is the same as its $\sim_{ew}$ class, then for any $\pi \in \La(\widetilde{\pi})$, we have:
\[m(\pi)= M(\La(\widetilde{\pi}))= \mathcal M(\La(\widetilde{\pi}))=\mathcal M(\phi).\]
Note that the middle equality can in general be a strict inequality, see for example \cite[Proposition 2.5]{bla94}.
\end{remark}

\subsection{Examples of higher cuspidal multiplicity due to Blasius}\label{blasius}

In this section we recall the first fundamental construction, due to D. Blasius (\cite{bla94}), of representations appearing with a multiplicity greater than one in the cuspidal spectrum of $\SL_n(\A_E)$. In view of the more recent results of Lapid and Hiraga-Saito recalled 
in Section \ref{section various multiplicities}, we give a slightly more modern treatment of the construction of Blasius, however following its exact same lines. For $p$ a fixed prime number, we denote by $H_p$ the Heisenberg subgroup of $\GL_3(\F_p)$ of upper triangular unipotent matrices with order $p^3$. Blasius considers finite products of Heisenberg groups \[H_{p_i}=\left\lbrace \left( \begin{array}{ccc} 1 & a & c \\ 0 & 1 & b \\ 0 & 0 & 1 \end{array}\right) \mid \ a,\ b, \ c \in \Z/p_i \right\rbrace,\] where for our purpose we restrict a finite number of odd primes $p_i$ possibly equal for $i\neq j$. For each index $i$, we denote by $Z_i$ the center of $H_{p_i}$, and by $\mathcal{L}_i$ the Lagrangian subgroup of $H_{p_i}$ given by $a=0$. We then set $H=\prod_i H_{p_i}$, $\mathcal{L}=\prod_i \mathcal{L}_i$ and $Z=\prod_i Z_i$.

Now let $E$ be our number field. Since $H$ is a product of $p$-groups it is solvable, and therefore by the well-known result of Shafarevich in inverse Galois theory, there is a Galois extension 
$L/E$ such that $\Gal(L/E)=H$. Now take for each $i$ a non-trivial character $\chi_i$ of $Z_i$ and extend $\chi_i$ to a character $\widetilde{\chi_i}$ of $\mathcal{L}_i$ by \[\widetilde{\chi_i}\left( \begin{array}{ccc} 1 & 0 & c \\ 0 & 1 & b \\ 0 & 0 & 1 \end{array}\right)=\chi_i \left( \begin{array}{ccc} 1 & 0 & c \\ 0 & 1 & 0 \\ 0 & 0 & 1 \end{array}\right).\] Now set $\chi=\otimes_i \chi_i$ the corresponding character of $Z$, and call it a \emph{regular} character of $Z$ (meaning all the $\chi_i$ are non-trivial), and  
$\widetilde{\chi}=\otimes_i \widetilde{\chi_i}$ to be the corresponding character of  
$\mathcal{L}=\Gal(L/L_{\mathcal{L}})$ (for $L_{\mathcal{L}}$ an extension of $E$). This character can be seen as a Hecke character of the Weil group $W_{L_{\mathcal{L}}}$ (which is trivial on $W_L$). The induced representation 
$I_{\chi}=\Ind_{W_{L_{\mathcal{L}}}}^{W_E}(\widetilde{\chi})$ is an irreducible representation of $H$ of dimension $n=\prod_i p_i$ and 
when $\chi$ varies, the representations $I_\chi$ are non-isomorphic and describe all the irreducible representations of $H$, their number being equal to 
\[m(n)=\prod_i (p_i-1).\] We then set $\widetilde{\pi}_{\chi}$ to be the cuspidal automorphic representation of $\GL_n(\A_E)$ attached to $I_{\chi}$ in \cite{ac89}. By Theorem 
\ref{theorem Lapid multiplicity} we obtain the following result from Section 1.1 of \cite{bla94}.

\begin{proposition}\label{proposition Lapid Theorem 2}
In the situation above, let $\pi\in \mathcal{A}_0^\infty(\SL_n(E)\backslash \SL_n(\A_E))$ be an irreducible summand of $\widetilde{\pi}_{\chi}$. Then $\mathcal M(\La(\widetilde{\pi}_{\chi}))=m(n)$. 
\end{proposition}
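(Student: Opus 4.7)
The plan is to apply Theorem \ref{theorem Lapid multiplicity}, which identifies $\mathcal M(\La(\widetilde{\pi}_\chi))$ with $\mathcal M(I_\chi)$, and then compute the latter directly on the Galois parameter side. The natural candidates for members of the weak equivalence class of $I_\chi$ are the representations $\{I_{\chi'}\}$, where $\chi'$ ranges over regular characters of $Z = \prod_i Z_i$. Since each $Z_i$ contributes $p_i - 1$ nontrivial characters, there are precisely $\prod_i (p_i - 1) = m(n)$ such candidates. It will then remain to verify: (i) they are pairwise weakly equivalent, (ii) they are pairwise strongly inequivalent, and (iii) they exhaust the weak equivalence class of $I_\chi$.

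For (i), since both $I_\chi$ and $I_{\chi'}$ factor through $H = \Gal(L/E)$, it suffices to check that $\overline{I_\chi(g)}$ and $\overline{I_{\chi'}(g)}$ are conjugate in $\mathrm{PGL}_n(\C)$ at each $g \in H$. Via the tensor product decomposition $I_\chi \simeq \bigotimes_i \rho_{\chi_i}$, where $\rho_{\chi_i}$ is the unique irreducible $p_i$-dimensional representation of $H_{p_i}$ with central character $\chi_i$, this reduces to a factor-by-factor check: when $g \in Z_i$, the matrix $\rho_{\chi_i}(g)$ is a scalar and hence trivial in $\mathrm{PGL}_{p_i}(\C)$; when $g \notin Z_i$, the trace of $\rho_{\chi_i}(g)$ vanishes while $g^{p_i} \in Z_i$ forces $\rho_{\chi_i}(g)^{p_i}$ to be scalar, so the eigenvalues of $\rho_{\chi_i}(g)$ form a complete orbit of $p_i$-th roots of unity times a common factor, giving a $\mathrm{PGL}_{p_i}(\C)$-conjugacy class independent of the choice of nontrivial $\chi_i$.

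For (ii), suppose $\chi \neq \chi'$ and $I_\chi \sim_s I_{\chi'}$. Schur's lemma applied to the irreducible representation $I_\chi$ then yields an isomorphism $I_\chi \simeq I_{\chi'} \otimes \eta$ for some character $\eta$ of $W_E$, which must factor through the abelianization $H^{ab}$. A direct commutator computation gives $[H_{p_i}, H_{p_i}] = Z_i$ and therefore $[H, H] = Z$, so $\eta$ is trivial on $Z$; restricting the isomorphism to $Z$ then forces $\chi = \chi'$, a contradiction.

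The main obstacle will be (iii), exhaustion: showing that every $\phi'$ with $\phi' \sim_w I_\chi$ is strongly equivalent to some $I_{\chi'}$. Since $I_\chi|_{W_L} = 1$, weak equivalence forces $\overline{\phi'|_{W_L}}$ to be trivial, so $\phi'|_{W_L}$ is given by a scalar character; after twisting $\phi'$ by an appropriate character of $W_E$ (possible thanks to the finite index of $W_L$ in $W_E$), one may assume $\phi'$ factors through $H$. One then invokes the classification of irreducible representations of the finite nilpotent group $H$: by the tensor product structure, these are exactly the representations $I_{\chi''}$ for regular $\chi''$, together with the characters of the abelian quotient $H/Z$. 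Pointwise $\mathrm{PGL}_n(\C)$-conjugacy with $I_\chi$, in particular equality of dimensions, rules out the one-dimensional characters, forcing $\phi' \simeq I_{\chi''}$ for some regular $\chi''$. Care is required in handling the character-twist ambiguity inherent in lifting a projective representation of $H$ to a genuine representation, but the explicit structure of $H$ and its Schur multiplier makes this tractable, and the argument ultimately recovers the original analysis of \cite[Section 1.1]{bla94}.
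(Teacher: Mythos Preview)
Your approach is essentially the same as the paper's: apply Theorem \ref{theorem Lapid multiplicity} to transfer the computation to the Galois side and then invoke \cite[Section 1.1]{bla94} for the pointwise $\mathrm{PGL}_n(\C)$-conjugacy and projective inequivalence of the $I_\chi$'s. You spell out the Blasius details (your (i) and (ii)) explicitly and correctly, and you add an exhaustion argument (iii) that the paper's two-line proof does not isolate here but instead addresses in the paragraph following the proposition, via Remark \ref{remark Lapid Galois type} on the automorphic side rather than purely on the Galois side as you do.
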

\begin{proof}
According to Theorem \ref{theorem Lapid multiplicity}, it is sufficient to check that the conjugacy class of $I_\chi(w)$ in ${\rm PGL}_n(\C)$ is independent of $\chi$ for any $w\in W_E$ but that the $I_{\chi}$'s are inequivalent projective representations. This is done in 
\cite[Section 1.1]{bla94}. 
\end{proof}

We are however looking for information on $m(\pi)$ rather than $\mathcal M(\La(\widetilde{\pi}_{\chi}))$. Therefore we follow Blasius again to put us in a situation where $\mathcal M(\La(\widetilde{\pi}_{\chi}))= M(\La(\widetilde{\pi}_{\chi}))$ in order to apply Remark \ref{remark all equal multiplicities}. To this end we select $L$ as in the proof of \cite[Proposition 2.1]{bla94}, such that at all the places in $L$ lying above $p$ for each $p$ dividing $|H|$, $L$ is unramified. 

Then in such a situation, by  \cite[Proposition 2.1, (2)]{bla94}, we deduce that two representations $\widetilde{\pi}_{\chi}$ and $\widetilde{\pi}_{\chi'}$, for regular characters $\chi$ and $\chi'$ of $Z$, are not only weakly equivalent (which we already know from \cite[Section 1.1]{bla94} and Section \ref{section various multiplicities}), but they are in fact in the same $\sim_{ew}$-class, i.e., they are twists of each other at every place of $E$. Finally, by Remark \ref{remark Lapid Galois type}, if 
$\widetilde{\pi}$ is a cuspidal automorphic representation of $\GL_n(\A_E)$ weakly equivalent to $\widetilde{\pi}_{\chi}$, it is of Galois type with Galois parameter say $\phi$. Because for every $w\in W_E$, the conjugacy class of $I_{\chi}(w)$ in $\GL_n(\C)$ is equal to that of $\phi(w)$, we deduce that $I_\chi$ and $\phi$ have the same kernel, and are thus in fact both irreducible representations of $H$. This implies that $\phi$ is itself of the form $I_{\chi'}$ for a regular character $\chi'$ of $Z$, in particular the $\sim_w$ class of $\pi$ is equal to its $\sim_{ew}$ class. In view of Remark \ref{remark all equal multiplicities}, the outcome of this discussion is the following result, which also follows from the proof of \cite[Proposition 3.3]{bla94}.

\begin{proposition}\label{proposition Lapid Theorem 2}
Let $E$ be a number field and let $L$ be an extension of $E$ such that $\Gal(L/E)\simeq H$ and such that $L$ is unramified at every place of $L$ lying over a prime divisor of the cardinality $n=|H|$. Let $\chi$ be a regular character of $Z$ and let $\pi\in \mathcal{A}_0^\infty(\SL_n(E)\backslash \SL_n(\A_E))$ be an irreducible summand of $\widetilde{\pi}_{\chi}$. Then $m(\pi)=m(n)$ and the $\La$-packets containing a copy of $\pi$ are those of the form $\La(\widetilde{\pi}_{\chi'})$ for a regular character $\chi'$ of $Z$ and they are all different.
\end{proposition}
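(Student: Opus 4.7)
The plan is to extract the proof from the paragraph of discussion preceding the Proposition. For the first assertion $m(\pi)=m(n)$ I would chain
\[m(\pi) = M(\La(\widetilde{\pi}_\chi)) = \mathcal M(\La(\widetilde{\pi}_\chi)) = m(n).\]
The first equality is the theorem of Hiraga--Saito (\cite[Theorem 1.6]{hs12}), recalled in Section \ref{section various multiplicities}. The third equality is the preceding proposition, itself an immediate consequence of Theorem \ref{theorem Lapid multiplicity} combined with the computation in \cite[Section 1.1]{bla94} that there are exactly $m(n)$ strong equivalence classes of irreducible $n$-dimensional projective representations of $H$. The middle equality is Remark \ref{remark all equal multiplicities}, which applies once one uses \cite[Proposition 2.1, (2)]{bla94} together with the unramifiedness hypothesis on $L$ to force the $\sim_w$ and $\sim_{ew}$ classes of $\widetilde{\pi}_\chi$ to coincide. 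This is the only place where the hypothesis on $L$ is consumed.

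For the second assertion, I would argue that any $\La$-packet containing (a copy of) $\pi$ is of the form $\La(\widetilde{\pi}')$ for a cuspidal automorphic representation $\widetilde{\pi}'$ of $\GL_n(\A_E)$ weakly equivalent to $\widetilde{\pi}_\chi$. By Remark \ref{remark Lapid Galois type}, $\widetilde{\pi}'$ is of Galois type with some parameter $\phi'$, and the coincidence of the $\sim_w$ and $\sim_{ew}$ classes just established forces $\phi'(w)$ and $I_\chi(w)$ to be conjugate in $\GL_n(\C)$ for every $w\in W_E$. Hence $\phi'$ and $I_\chi$ have the same kernel and are both irreducible $n$-dimensional representations of the finite group $H$; by the classification of these in \cite[Section 1.1]{bla94}, $\phi' \simeq I_{\chi'}$ for some regular character $\chi'$ of $Z$, whence $\widetilde{\pi}' \simeq \widetilde{\pi}_{\chi'}$ by \cite{ac89}.

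Distinctness of the $\La(\widetilde{\pi}_{\chi'})$ as $\chi'$ varies over regular characters of $Z$ is then a counting statement. The previous step produces a surjection from the set of $m(n)=\prod_i(p_i-1)$ regular characters of $Z$ onto the set of $\La$-packets containing $\pi$, a set whose cardinality equals $M(\La(\widetilde{\pi}_\chi)) = m(\pi) = m(n)$ by the first part. A surjection between finite sets of equal cardinality is a bijection, which gives distinctness. The only conceptually delicate point is the equality of the $\sim_w$ and $\sim_{ew}$ classes of $\widetilde{\pi}_\chi$, which is the sole place where Blasius's unramifiedness hypothesis on $L$ really bites; once that is granted, everything else is bookkeeping with the character theory of $H$, already spelled out in Section \ref{blasius}.
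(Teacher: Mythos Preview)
Your proposal is correct and follows essentially the same approach as the paper: the proposition is stated as the ``outcome'' of the discussion preceding it, and you have faithfully extracted that discussion---the chain $m(\pi)=M(\La(\widetilde{\pi}_\chi))=\mathcal M(\La(\widetilde{\pi}_\chi))=m(n)$ via Hiraga--Saito, Remark~\ref{remark all equal multiplicities}, and the earlier proposition, together with the Galois-type argument from Remark~\ref{remark Lapid Galois type} to identify the $\La$-packets. One small expository point: the conjugacy of $\phi'(w)$ and $I_\chi(w)$ for \emph{every} $w\in W_E$ comes from the compatibility of $\sim_w$ on the Galois and automorphic sides via Chebotarev (as recorded in Remark~\ref{remark Lapid Galois type}), not directly from the coincidence of the $\sim_w$ and $\sim_{ew}$ classes; but the paper is equally terse here, and your counting argument for distinctness is a clean way to package what the paper leaves implicit.
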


\begin{remark}
Such extensions $L$ of $E$ exist in abundance by Shafarevich's theorem in inverse Galois theory.
\end{remark}

\begin{remark}\label{hs05}
In \cite{bla94}, Blasius had conjectured that two $\La$-packets, say $\La(\widetilde{\pi})$ and $\La(\widetilde{\pi}')$, would be isomorphic if $\widetilde{\pi}$ and $\widetilde{\pi}'$ are locally isomorphic at every place up to a character twist \cite[Conjecture on p. 239]{bla94}. This conjecture was later proved by Hiraga-Saito in 2005 \cite{hs05}. Lacking the truth of the conjecture at that point in time, \cite{bla94} resorted to a trick using complex conjugation. Note that reading out the precise multiplicity $m(\pi)$ is an immediate consequence of this result.
\end{remark}

\section{Two questions}\label{questions}

In this section we attempt to two natural and important questions. We thank Rapha\"el Beuzart-Plessis and Dipendra Prasad for posing the first of these questions to us in the context of this paper. We then consider one more question which in the case of $\SL(2)$ was answered by an explicit construction in \cite[Theorem 8.2]{ap06}. The key ingredient in all our constructions is the explicit nature of the examples of cuspidal representations of high multiplicity in \cite{bla94,lap98,lap99}. In these examples, we also need to make a crucial use of the main result of this paper (cf. Theorem \ref{theorem main}). 

\subsection{Questions}\label{qns}

We formulate two natural questions for each of which we provide answers in the later subsections.

\begin{question}\label{qn2}
Consider the natural decomposition of $\mathcal{A}^{\infty}_0(\SL_n(E)\backslash \SL_n(\A_E))$ into $\La$-packets. Let $\pi_1$ and $\pi_2$ be two irreducible submodules of $\mathcal{A}^{\infty}_0(\SL_n(E)\backslash \SL_n(\A_E))$ such that $\pi_1\simeq \pi_2$ but which belong to two different $\La$-packets $\La(\widetilde{\pi}_1)\neq \La(\widetilde{\pi}_2)$. If $p_n$ does not vanish on $\pi_1$, then is it true that it does not vanish on $\pi_2$?
\end{question}

\begin{remark}\label{remark 2} We shall see in Section \ref{examples} that the answer is No in general. Thus, for $n\geq 3$, there are 
cuspidal automorphic representations of $\SL_n(\A_E)$ which are locally distinguished, but with at least one  canonical realization in the space of smooth cusp forms on which $p_n$ vanishes.
\end{remark}

The following question arises immediately after the above remark.

\begin{question}\label{qn3}
For $n\geq 3$, are there cuspidal automorphic representations of $\SL_n(\A_E)$ which are locally distinguished at every place of $F$, but not globally? In fact is it even possible to construct such a representation which belongs to no distinguished $\La$-packet? \end{question}

We shall see in Section \ref{subsection examples 3} that such representations do exist. Note that though Question \ref{qn2} is not meaningful for $\SL_2(\A_E)$ according to Ramakrishnan's multiplicity $1$ result \cite{ram00}, the issues addressed by Remark \ref{remark 2}, as well as Question \ref{qn3} make sense for $n=2$. In this case they are all answered in \cite{ap06}. In fact it is sufficient to answer Question \ref{qn3} for $n=2$, and this is done by \cite[Theorem 8.2]{ap06}, the proof of which is quite involved: there are indeed cuspidal automorphic representations of $\SL_2(\A_E)$ which are locally distinguished at every place of $F$ but not globally. We shall provide easier examples of this type in Section \ref{examples} for $n\geq 3$. 

\subsection{Distinguished cuspidal representations of higher multiplicity}\label{bp}

Now we need to construct cuspidal representations $\pi$ of $\SL_n(\A_E)$ which are $\SL_n(\A_F)$-distinguished with $m(\pi) \geq 2$ for odd $n$. 

Let us explain our general recipe for this, using the examples of Blasius in \S \ref{blasius}. We take $n\geq 3$ odd and write it as $n = \prod_i p_i$. We set 
$H = \prod_i H_{p_i}$ as before and take an involution $\sigma$ of the group $H$. Associated to this involution is the semi-direct product 
\[G = H \rtimes \Z/2\]
where $\Z/2$ acts on $H$ via $\sigma$. Now let $F$ be any number field and let $L$ be an extension of $F$ such that Gal$(L/F) \simeq G$. In fact we choose $L$ in such a way that $L/F$ is unramified at each place of $F$ lying above any $p$ dividing $n$. Note that all these can be done by Shafarevich's theorem since $G$ is solvable.  Let $E$ be the fixed field of $H$ so that 
\[{\rm Gal}(L/E) \simeq H ~~ \& ~~ {\rm Gal}(E/F) = \langle \sigma \rangle.\]
Take an irreducible representation $\rho$ of $H$. It identifies with $I_{\chi_{\rho}}$ for $\chi_{\rho}$ a regular character of $Z$ and we set $\widetilde{\pi}(\rho)=\widetilde{\pi}_{\chi_{\rho}}$ (cf. \S \ref{blasius}). In particular, because $L/E$ is unramified at places of $E$ lying above the prime divisors of $n$, if $\pi$ belongs to $\La(\widetilde{\pi}(\rho))$, we obtain $m(\pi)=m(n)$ thanks to Proposition \ref{proposition Lapid Theorem 2}. In this situation, we have the following very useful result due to the rigidity of the representation theory of Heisenberg groups, which we will apply in order to produce examples answering Question \ref{qn2}:

\begin{proposition}\label{proposition recipe for general examples} 
In the situation described above, take an irreducible representation $\rho$ of $H$ and denote by $c_\rho$ its central character. The $\La$-packet $\La(\widetilde{\pi}(\rho))$ is distinguished if and only if 
$c_\rho(z^\sigma) =c_\rho(z^{-1})$ for all $z\in Z$.
\end{proposition}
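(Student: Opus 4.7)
The plan is to apply Proposition \ref{proposition characterization of global distinguished L packets} in order to convert distinction of $\La(\widetilde{\pi}(\rho))$ into an isomorphism of Galois parameters, and then to exploit the rigidity of irreducible representations of the Heisenberg group $H$. First, I recall that $\widetilde{\pi}(\rho)$ is the cuspidal representation of $\GL_n(\A_E)$ attached by Arthur-Clozel (and characterized by strong multiplicity one on $\GL_n$) to the irreducible Galois parameter $I_{\chi_\rho}$ of $W_E$. Hence, by Proposition \ref{proposition characterization of global distinguished L packets}, distinction of $\La(\widetilde{\pi}(\rho))$ is equivalent to the existence of a Hecke character $\alpha\in\widehat{F^\times\backslash\A_F^\times}$ with $\widetilde{\pi}(\rho)^\vee\simeq \alpha\circ N_{E/F}\otimes \widetilde{\pi}(\rho)^\theta$. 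Transporting back to the Galois side via the standard compatibilities of Arthur-Clozel with contragredient, Galois conjugation (realized on $W_E$ by conjugation by a lift $\widetilde\sigma\in W_F$ of $\sigma$), and character twists, this becomes the existence of such $\alpha$ satisfying
\[ I_{\chi_\rho}^\vee\;\simeq\;\widetilde{\alpha}|_{W_E}\otimes I_{\chi_\rho}^{\sigma} \]
as representations of $W_E$, where $\widetilde\alpha$ is the character of $W_F$ attached to $\alpha$ by class field theory.

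The key observation is that in any such isomorphism the character $\widetilde\alpha|_{W_E}$ is forced to be trivial on the center $Z$ of $H$. Indeed, both sides of the displayed isomorphism factor through $W_E\twoheadrightarrow\mathrm{Gal}(L/E)=H$: the left hand side because $I_{\chi_\rho}$ is inflated from $H$, and the right hand side because $I_{\chi_\rho}^\sigma$ also factors through $H$ (as $\sigma$ preserves $L$ and $E$). Consequently, on the kernel $K=\ker(W_E\to H)$ the representation $I_{\chi_\rho}^\sigma$ is trivial, so $\widetilde\alpha|_K$ must also be trivial; hence $\widetilde\alpha|_{W_E}$ factors through $H$, and therefore through $H^{\mathrm{ab}}=H/[H,H]=H/Z$. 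Comparing central characters on $Z$ in the displayed isomorphism now yields $\chi_\rho^{-1}(z)=\chi_\rho(z^{\sigma})$ for all $z\in Z$, which is precisely the stated condition $c_\rho(z^\sigma)=c_\rho(z^{-1})$. This settles the only-if direction.

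For the converse, assume $c_\rho(z^\sigma)=c_\rho(z^{-1})$ on $Z$. Then $I_{\chi_\rho}^\vee$ and $I_{\chi_\rho}^\sigma$ are irreducible representations of $H=\prod_i H_{p_i}$ sharing the same regular central character $\chi_\rho^{-1}$, so Stone-von Neumann rigidity for the finite Heisenberg groups $H_{p_i}$, applied factor by factor, gives $I_{\chi_\rho}^\vee\simeq I_{\chi_\rho}^\sigma$ as representations of $W_E$; taking $\alpha\equiv 1$ in Proposition \ref{proposition characterization of global distinguished L packets} then yields distinction of $\La(\widetilde\pi(\rho))$. I expect the main obstacle to be spelling out precisely enough the compatibilities of the Arthur-Clozel correspondence with contragredient, $\theta$-twist, and Hecke-character twists to justify the displayed isomorphism, together with the triviality of $\widetilde{\alpha}|_{W_E}$ on $Z=[H,H]$; once these are pinned down, each direction reduces to a clean comparison of central characters of Heisenberg representations.
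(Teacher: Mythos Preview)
Your argument is correct and follows the same opening move as the paper: both invoke Proposition \ref{proposition characterization of global distinguished L packets} to reduce to an isomorphism $\widetilde{\pi}(\rho)^\vee\simeq \mu\otimes\widetilde{\pi}(\rho)^\theta$ for some Hecke character $\mu$ coming from $F$, and both finish the converse by observing that $\rho^\vee\simeq\rho^\sigma$ (equivalently, the central-character condition) already gives the isomorphism with trivial twist.

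The difference lies in how the twist is eliminated in the ``only if'' direction. The paper stays on the automorphic side: from $\widetilde{\pi}((\rho^\vee)^\sigma)\simeq \mu\otimes\widetilde{\pi}(\rho)$ it concludes $\La(\widetilde{\pi}((\rho^\vee)^\sigma))=\La(\widetilde{\pi}(\rho))$, and then invokes Proposition \ref{proposition Lapid Theorem 2} (the Blasius/Lapid input, valid under the unramifiedness hypothesis built into the setup) to deduce $(\rho^\vee)^\sigma\simeq\rho$. You instead pass to the Galois side and argue directly that the twisting character $\widetilde\alpha|_{W_E}$, being trapped between two representations inflated from $H$, must factor through $H$ and hence through $H^{\mathrm{ab}}=H/Z$; comparing central characters on $Z$ then gives the condition immediately. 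Your route is slightly more elementary in that it does not appeal to the distinctness of the $\La$-packets $\La(\widetilde{\pi}_{\chi'})$ (and so does not actually need the unramifiedness hypothesis on $L$), at the cost of spelling out the Arthur--Clozel compatibilities with dual, Galois twist, and character twist. The paper's route is shorter once Proposition \ref{proposition Lapid Theorem 2} is available, and keeps the argument entirely on the automorphic side.
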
 
\begin{proof}
By Proposition \ref{proposition characterization of global distinguished L packets}, $\La(\widetilde{\pi}(\rho))$ is distinguished if and only if 
$(\widetilde{\pi}(\rho)^\vee)^\sigma \simeq \mu \otimes \widetilde{\pi}(\rho)$ for a Hecke character $\mu$ factoring through 
$N_{E/F}$. This is equivalent to $\widetilde{\pi}((\rho^\vee)^\sigma) \simeq \mu \otimes \widetilde{\pi}(\rho)$. However as 
the $\La$-packets determined by different irreducible representations are different thanks to Proposition \ref{proposition Lapid Theorem 2}, we easily deduce that $\La(\widetilde{\pi}(\rho))$ is distinguished if and only if $\rho$ is conjugate self-dual, i.e., $\rho^\vee \simeq \rho^\sigma$. The result now follows from the fact that $\rho$ is determined by its central character.
\end{proof}

In view of Corollary \ref{corollary crux}, a consequence of Proposition \ref{proposition recipe for general examples} is 
the following.

\begin{corollary}\label{corollary recipe for general examples} 
In the situation of Proposition \ref{proposition recipe for general examples}, let $\rho$ be an irreducible representation of $H$ such that $c_\rho^\sigma=c_\rho^{-1}$, and $\pi\in \La(\widetilde{\pi}(\rho))$ such that $\mathcal{P}_{\SL_n(\A_F)}$ does not vanish on $\pi$. Then the canonical copies of $\pi$ on which $\mathcal{P}_{\SL_n(\A_F)}$ does not vanish are those contained in the 
$\La$-packets of the form $\La(\widetilde{\pi}(\rho'))$ with $\rho'$ an irreducible representation of $H$ such that 
$c_{\rho'}^\sigma=c_{\rho'}^{-1}$.
\end{corollary}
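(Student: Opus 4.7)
The plan is to derive the corollary almost immediately by combining Proposition \ref{proposition recipe for general examples} with Corollary \ref{corollary crux}, using Proposition \ref{proposition Lapid Theorem 2} to identify the L-packets containing a copy of $\pi$. The statement is an ``if and only if'' describing exactly which of the L-packets $\La(\widetilde{\pi}(\rho'))$, as $\rho'$ ranges over irreducible representations of $H$, yield a non-vanishing period on the canonical copy of $\pi$; so I would split the argument into the two directions.

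For the forward direction, suppose that $p_n$ does not vanish on the copy of $\pi$ sitting inside $\La(\widetilde{\pi}(\rho'))$. Then by definition $\La(\widetilde{\pi}(\rho'))$ contains a distinguished representation, so it is a distinguished $\La$-packet, and Proposition \ref{proposition recipe for general examples} immediately forces $c_{\rho'}^\sigma = c_{\rho'}^{-1}$.

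For the converse, assume $c_{\rho'}^\sigma = c_{\rho'}^{-1}$. By Proposition \ref{proposition recipe for general examples} again the $\La$-packet $\La(\widetilde{\pi}(\rho'))$ is distinguished. On the other hand Proposition \ref{proposition Lapid Theorem 2} (applied with the condition that $L/E$ is unramified at places above prime divisors of $n$) tells us that the $\La$-packets which contain an isomorphic copy of $\pi$ are exactly those of the form $\La(\widetilde{\pi}(\rho''))$ as $\rho''$ runs through the irreducible representations of $H$; in particular $\La(\widetilde{\pi}(\rho'))$ is one of them. Since $\pi$ is $\SL_n(\A_F)$-distinguished by hypothesis, Corollary \ref{corollary crux} applies and gives that $p_n$ does not vanish on the unique copy of $\pi$ inside the distinguished $\La$-packet $\La(\widetilde{\pi}(\rho'))$, which is exactly what we wanted.

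No step of this argument is genuinely an obstacle: the corollary is essentially a bookkeeping consequence of the hard work already done. The one point worth emphasising in the write-up is the appeal to Proposition \ref{proposition Lapid Theorem 2}, which requires the unramifiedness assumption on $L/E$ built into the setup of the proposition, and the fact that by Heisenberg rigidity the list $\{\La(\widetilde{\pi}(\rho'))\}$ exhausts the $\La$-packets that can contain a copy of $\pi$, so there are no other ``canonical realizations'' to consider.
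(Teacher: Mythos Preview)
Your argument is correct and is exactly the one the paper intends: the corollary is stated there without a separate proof, with the preceding sentence indicating it follows from Proposition~\ref{proposition recipe for general examples} together with Corollary~\ref{corollary crux}, and your invocation of Proposition~\ref{proposition Lapid Theorem 2} to enumerate the L-packets containing a copy of $\pi$ is precisely the ambient setup of \S\ref{bp}.
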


\subsection{Examples for Question \ref{qn2}}\label{examples}

We first give two examples for which we answer Question \ref{qn2}. In the first one, 
all the canonical copies of the considered distinguished representation have a non-vanishing period, whereas in the second example only some of the canonical copies of the considered distinguished representation have a non-vanishing period and some others do not have a non-vanishing period.

For the first set of examples, the group $H$ is as in Section \ref{bp} and the involution that we consider on it, for $a, \ b$ and $c$ in $\prod_i \Z/p_i$, is given by 
\[\sigma: \left( \begin{array}{crr} 1 & a & c \\ 0 & 1 & b \\ 0 & 0 & 1 \end{array} \right) \mapsto \left( \begin{array}{crr} 1 & a & -c \\ 0 & 1 & -b \\ 0 & 0 & 1 \end{array} \right).\]
In this case because the associated involution acts as the inversion on $Z$, Proposition \ref{proposition recipe for general examples} tells us that all $\La$-packets 
$\La(\widetilde{\pi}(\rho))$ are distinguished when $\rho$ varies in the set of irreducible classes of representations of $H$, and that if one fixes a representation $\pi$ in one $\La$-packet on which $\mathcal{P}_{\SL_n(\A_F)}$ does not vanish, then it does not vanish on any of the $m(n)$ canonical copies of $\pi$.

For the second set of examples, we consider $H$ as above (of odd cardinality $n$) and $H'=H\times H$ (which is in fact a special type of $H$) endowed with the switching involution 
\[\sigma:(x,y) \mapsto (y,x).\]
In this case Proposition \ref{proposition recipe for general examples} tells us that the distinguished $\La$-packets of 
$\SL_{n^2}(\A_E)$ of the form $\La(\widetilde{\pi}(\rho'))$ are the $m(n)$ ones such that $\chi_{\rho'}$ is of the form $\chi\otimes \chi^{-1}$ with $\chi$ regular, whereas the others are not. Then again by Corollary \ref{corollary recipe for general examples} we conclude that if 
$\pi$ is a fixed distinguished representation of $\SL_{n^2}(\A_E)$ appearing in one of the $m(n)^2$ many $\La$-packets above, then the period $\mathcal{P}_{\SL_{n^2}(\A_F)}$  does not vanish on the $m(n)$ canonical copies inside the distinguished $m(n)$ many distinguished $\La$-packets, and does vanish on the $m(n)^2-m(n)$ remaining ones.

\subsection{Examples for Question \ref{qn3}}\label{subsection examples 3} 

Now we give a set of examples answering Question $3$, using again Proposition \ref{proposition recipe for general examples}.  

For simplicity we take $H=H_p$ for 
$p$ an odd prime (i.e., $n=p$),  and we also take $L/F$, hence in particular $E/F$, split at Archimedean places (however we explain in Remark \ref{remark distinguished ps} how to get rid of this assumption). Let $\sigma$ be an involution of $H$ such that $z^\sigma = z$ for all $z \in Z$. Thus, we may take the trivial involution or the involution of $H$ given by 
\[\sigma: \left( \begin{array}{crr} 1 & a & c \\ 0 & 1 & b \\ 0 & 0 & 1 \end{array} \right) \mapsto \left( \begin{array}{crr} 1 & -a & c \\ 0 & 1 & -b \\ 0 & 0 & 1 \end{array} \right).\]
Since $z^\sigma = z$ for all $z\in Z$, 
Proposition \ref{proposition recipe for general examples} implies that no $\La$-packet of the form 
$\widetilde{\pi}(\rho)$ for $\rho$ an irreducible representation of $H$ is distinguished because, as $|Z|$ is odd, the only character of $Z$ of order $\leq 2$ is trivial.

It remains to prove that if we fix $\rho$ as above, and set $\widetilde{\pi}=\widetilde{\pi}(\rho)$, then $\La(\widetilde{\pi})$ contains an automorphic
representation $\pi$ such that $\pi_v$ is $\SL_p(F_v)$-distinguished for every place $v$ of $F$. This is equivalent to showing that $\widetilde{\pi}_v$ is $(\GL_p(F_v),\gamma_v)$-distinguished for some character $\gamma_v$ of $F_v^\times$, which is what we do. Recall that by \cite[Proposition 2.1]{bla94},
\[\widetilde{\pi}_v^\sigma \simeq \widetilde{\pi}_v^\vee \otimes \eta_v\]
at each place $v$ for a character $\eta_v$ of $E_v^\times$. 

If a place $v$ of $F$ splits in $E$ as $(v_1,v_2)$ then the above condition implies $\widetilde{\pi}_v$ is of the form $(\tau,\tau^\vee \otimes \nu)$ which is distinguished for the character $\nu$ of $\F_v^\times$. 

Now let $v$ be such that it does not split in $E$, in particular $v$ is finite. We set $B_p(E_v)$ the upper triangular Borel subgroup of $\GL_p(E_v)$.

We write as before 
$\widetilde{\pi}=\widetilde{\pi}(\rho)$ for $\rho$ an irreducible representation of $H$. We denote by $\mathcal L$ and $\mathcal{L}'$ the first and the second  Lagrangian subgroups of $H$ given by $a=0$ and $b=0$ respectively (cf. \S \ref{blasius}). 
By the proof of \cite[Proposition 2.1]{bla94} the local Galois group of $H_v$ is an abelian subgroup of $H$, hence either trivial, equal to $Z$, $\mathcal{L}$ or $\mathcal{L}'$. We recall that $\rho=\Ind_{\mathcal{L}}^H(\widetilde{\chi})$ where 
\[\widetilde{\chi} \left( \begin{array}{crr} 1 & 0 & c \\ 0 & 1 & b \\ 0 & 0 & 1 \end{array} \right) = \chi(c)\]
for $\chi$ a non-trivial character of $\Z/p$. We fix $\mu$ a non-trivial character $\Z/p$ and set 
\[\widetilde{\mu} \left( \begin{array}{crr} 1 & 0 & c \\ 0 & 1 & b \\ 0 & 0 & 1 \end{array} \right) = \mu(b).\]
Similarly we set
\[\widetilde{\chi}' \left( \begin{array}{crr} 1 & a & c \\ 0 & 1 & 0 \\ 0 & 0 & 1 \end{array} \right) = \chi(c)\]
and 
\[\widetilde{\mu}' \left( \begin{array}{crr} 1 & a & c \\ 0 & 1 & 0 \\ 0 & 0 & 1 \end{array} \right) = \mu(a).\]

Clearly if $H_v$ is trivial or equal to $Z$, then $\rho_{|H_v}$ is a sum of copies of the same character, hence $\widetilde{\pi}_v$ is of the form 
\[\mathrm{Ps}(\alpha,\dots,\alpha)=\Ind_{B_p(E_v)}^{\GL_p(E_v)}(\alpha\otimes \dots \otimes \alpha),\] where induction is normalized, hence 
$\alpha_{|F_v^\times}$-distinguished by \cite[Theorem 5.2]{mat11}. Now we consider the case 
$H_v=\mathcal{L}$. Then by Mackey theory,
\[\rho_{|\mathcal{L}}=\widetilde{\chi}.(\oplus_{k=0}^{p-1}\widetilde{\mu}^k).\] 
Thus the corresponding principal series is of the form
\begin{equation}\label{1}
\widetilde{\pi}_v = \mathrm{Ps}(\alpha,\alpha\beta,\alpha\beta^{-1},\dots,\alpha\beta^{(p-1)/2},\alpha\beta^{-(p-1)/2}).
\end{equation}

If $\sigma$ is the trivial involution we trivially have $\beta = \beta^\sigma$ so (\ref{1}) takes the form
\[\widetilde{\pi}_v = \alpha \otimes \mathrm{Ps}(1,\beta,\beta^{-\sigma},\dots,\beta^{(p-1)/2},(\beta^{(p-1)/2})^{-\sigma}),\]
which is distinguished by \cite[Theorem 5.2]{mat11}.

If $\sigma$ is the non-trivial involution such that $z^\sigma = z$ for $z \in Z$ then note that $\sigma$ fixes $\widetilde{\chi}$ whereas it sends $\widetilde{\mu}$ to its inverse. We set 
$\mu_k=\alpha\beta^{k}$ for $k=1,\dots,(p-1)/2$, so that (\ref{1}) takes the form
\[\widetilde{\pi}_v= \mathrm{Ps}(\alpha,\mu_1,\mu_1^\sigma,\dots,\mu_{(p-1)/2},\mu_{(p-1)/2}^\sigma).\] 
Now because for $k=1,\dots,(p-1)/2$, one has $\alpha^2=\mu_k\mu_k^\sigma$ and hence $\alpha_{|F_v^\times}^2={\mu_k}_{|F_v^\times}^2$, but as both characters in this equality have odd order $p$ we deduce that $\alpha_{|F_v^\times}={\mu_k}_{|F_v^\times}$. So 
\[\widetilde{\pi}_v= \alpha\otimes \mathrm{Ps}(1,\alpha^{-1}\mu_1,\alpha^{-1} \mu_1^\sigma,\dots,\alpha^{-1} \mu_{(p-1)/2},\alpha^{-1} \mu_{(p-1)/2}^\sigma)\] and all the characters appearing in the principal series have trivial restriction to $F_v^\times$, we deduce again from \cite[Theorem 5.2]{mat11} that $\widetilde{\pi}_v$ is $\alpha_{|F_v^\times}$-distinguished. 

Finally when $H_v=\mathcal{L}'$ then 
\[\rho_{|\mathcal{L}'}=\widetilde{\chi}'.(\oplus_{k=0}^{p-1}\widetilde{\mu}'^k)\] and a completely similar argument proves that 
$\widetilde{\pi}_v$ is distinguished by a character.

Hence $\La(\widetilde{\pi})$ does not contain any distinguished representation but it contains cuspidal representations which are everywhere locally distinguished.

\begin{remark}\label{remark distinguished ps}
In constructing examples in this section, we chose $L/F$ such that the Archimedean places split in order to have $E/F$ split at the Archimedean places. This assumption can be removed as the characterisation of a generic distinguished principal series as in \cite[Theorem 5.2]{mat11} is true also for $(\GL_n(\mathbb C),\GL_n(\mathbb R))$. Namely a generic principal series 
$\mathrm{Ps}(\chi_1,\dots,\chi_n)$ of $\GL_n(\C)$ is $\GL_n(\R)$-distinguished if and only if there is an involution $\epsilon$ of in the symmetric group $S_n$ such that $\chi_{\epsilon(i)}=\chi_i^{-\sigma}$ for any $i=1,\dots,n$, and moreover 
$(\chi_i)_{|\R^\times}=1$ if $\epsilon(i)=i$. The direct implication is a special case of \cite[Theorem 1.2]{kem15}, whereas the other implication can be obtained as follows. 
First up to re-ordering (which is possible as the principal series is generic by assumption) we can suppose that there is $1\leq s\leq \lfloor n/2 \rfloor$ such that $\chi_{2i}=\chi_{2i-1}^{-\sigma}$ for $i=1,\dots,s$, and that $(\chi_i)_{|\R^\times}=1$ for $i=2s+1,\dots,n$. Now a principal series $\mathrm{Ps}(\chi,\chi^{-\sigma})$ of $\GL_2(\C)$ is $\GL_2(\R)$-distinguished. Indeed by \cite[Théorème 3]{cd94}, for $s\in \C$ with $\mathrm{Re}(s)$ large enough, there is a $\GL_2(\R)$-invariant continuous linear form $L_s$ on $\mathrm{Ps}(\chi| \ . \ |_{\R}^s,\chi^{-\sigma}| \ . \ |_{\R}^{-s})$, and a holomorphic function $h$ on $\C$ such that $h(s)L_s(f_s)$ extends to a holomorphic function on $\C$ for any flat section $f_s$ of $\mathrm{Ps}(\chi| \ . \ |_{\R}^s,\chi^{-\sigma}| \ . \ |_{\R}^{-s})$. Moreover by [ibid.] the function  $h(s)L_s(f_s)$ is non-zero for some choice of $f_s$, which we can suppose to be $\mathrm{U}(2,\C/\R)$-finite because $L_s$ is continuous for $\mathrm{Re}(s)$ large enough. A standard leading term argument then allows to regularize $L_s$ at $s=0$ to define a non-zero $\GL_2(\R)$-invariant linear form $L$ on the space of $\mathrm{U}(2,\C/\R)$-finite vectors in $\mathrm{Ps}(\chi,\chi^{-\sigma})$. Finally one extends $L$ to a necessarily non-zero element of $\Hom_{\GL_2(\R)}(\mathrm{Ps}(\chi,\chi^{-\sigma}),\C)$ by \cite[Théorème 1]{bd92}. Once we have this result, the transitivity of parabolic induction together with a closed orbit contribution argument allows to define a non-zero $\GL_n(\R)$-invariant linear form on $\mathrm{Ps}(\chi_1,\dots,\chi_n)$. 
\end{remark}

\begin{remark}
It is not hard to extend the examples obtained in this section in the cuspidal case, to the square-integrable case, using the results of this paper. 
\end{remark}

\section*{Acknowledgements}

The authors thank Rapha\"el Beuzart-Plessis and Yiannis Sakellaridis for useful comments and explanations. The content of \S \ref{bp} \& \ref{examples} grew out of a discussion with Beuzart-Plessis. The authors would like to especially thank Dipendra Prasad for his questions and comments over several e-mail conversations; his guidance in general has played a significant role in the writing of some parts of this paper.


\end{document}